\theoremstyle{definition}
\newtheorem{thm}{Theorem}[section]
\newtheorem{lem}[thm]{Lemma}
\newtheorem{cor}[thm]{Corollary}
\newtheorem{prop}[thm]{Proposition}
\newtheorem{conj}[thm]{Conjecture}
\theoremstyle{definition}
\newtheorem{rem}[thm]{Remark}
\newtheorem{ex}[thm]{Example}
\def\F{{\mathbb F}}
\def\PP{{\mathbb P}}
\def\Q{{\mathbb Q}}
\def\R{{\mathbb R}}
\def\Z{{\mathbb Z}}
\def\C{{\mathbb C}}
\def\Gal{\mathop{\mathrm{Gal}}}
\def\GL{{\mathop{\mathrm{GL}}}}
\def\Pic{{\mathop{\mathrm{Pic}}}}
\def\Jac{{\mathop{\mathrm{Jac}}}}
\def\PSL{{\mathop{\mathrm{PSL}}}}
\def\SL{{\mathop{\mathrm{SL}}}}
\def\Supp{{\mathop{\mathrm{Supp}}}}
\def\New{\mathrm{New}}
\def\new{\mathrm{new}}
\numberwithin{equation}{section}
\begin{document}

\title[Modularity of elliptic curves]{
The modularity of elliptic curves over all but finitely many totally real fields of degree $5$}

\date{October 9, 2022}

\author{Yasuhiro Ishitsuka}
\address{
Institute of Mathematics for Industry, Kyushu University,
Fukuoka, 819--0395, Japan}
\email{yishi1093@gmail.com}

\author{Tetsushi Ito}
\address{
Department of Mathematics, Faculty of Science, Kyoto University
Kyoto, 606--8502, Japan}
\email{tetsushi@math.kyoto-u.ac.jp}

\author{Sho Yoshikawa}
\address{
Gakushuin University, Department of Mathematics, 1-5-1, Mejiro, Toshima-ku, Tokyo, 171--8588, Japan}
\email{yoshikawa@math.gakushuin.ac.jp}

\keywords{Modularity, Elliptic curves, Totally real fields, Modular curves, Modular forms, Jacobians}

\maketitle

\begin{abstract}
We study the finiteness of low degree points on
certain modular curves and their Atkin--Lehner quotients,
and, as an application, prove the modularity of elliptic curves
over all but finitely many totally real fields of degree $5$.
On the way, we prove a criterion for the finiteness of
rational points of degree $5$ on a curve of large genus
over a number field using the results of Abramovich--Harris
and Faltings on subvarieties of Jacobians.
\end{abstract}

\section{Introduction}

Let $F$ be a totally real field,
i.e., $F$ is a finite extension of $\Q$ such that
$\iota(F) \subset \R$ for every embedding
$\iota \colon F \hookrightarrow \C$.
An elliptic curve $E$ over $F$ is said to be \textit{modular}
if its Hasse--Weil $L$-function coincides with
the $L$-function of a Hilbert modular newform of
parallel weight $2$ over $F$.
The following conjecture is a generalization of
the famous Shimura--Taniyama conjecture.

\begin{conj}
\label{Conjecture:Modularity}
Every elliptic curve over a totally real field $F$ is modular.
\end{conj}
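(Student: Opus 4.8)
The plan is to attack the modularity of a given elliptic curve $E$ over a totally real field $F$ through the Taylor--Wiles--Kisin machinery, reducing the problem to the \emph{residual} modularity of the mod-$p$ Galois representations
\[
\overline{\rho}_{E,p} \colon \Gal(\overline{F}/F) \longrightarrow \GL_2(\F_p)
\]
attached to $E$. The essential input is a modularity lifting theorem: if $\overline{\rho}_{E,p}$ is modular, absolutely irreducible after restriction to $\Gal(\overline{F}/F(\zeta_p))$, and satisfies suitable local conditions at $p$, then $E$ itself is modular. Such theorems are available in considerable generality over totally real fields (Kisin, Gee, Barnet-Lamb--Gee--Geraghty, and Thorne), so the crux becomes establishing residual modularity while arranging that the irreducibility hypotheses hold.

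To produce residual modularity I would work at the small primes $p \in \{3,5,7\}$ and combine two ingredients. First, for $p=3$ one invokes the Langlands--Tunnell theorem together with solvable base change, which makes $\overline{\rho}_{E,3}$ modular whenever its image is not too small. When the $3$-adic representation is unusable, the classical ``$3$--$5$ switch'' of Wiles---and its extension to $5$ and $7$---transfers the problem to an auxiliary elliptic curve whose mod-$3$ representation is congruent to a known modular one. More robustly, Taylor's potential modularity theorems guarantee modularity of $\overline{\rho}_{E,p}$ after a totally real solvable base change, and this modularity descends back to $F$ through the solvable descent built into the lifting theorems. In this way $E/F$ is modular as soon as, for at least one $p \in \{3,5,7\}$, the representation $\overline{\rho}_{E,p}$ has large image and satisfies the local hypotheses at $p$.

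The remaining, genuinely hard, cases are the elliptic curves for which \emph{every} relevant $\overline{\rho}_{E,p}$ is degenerate simultaneously---reducible, or with image contained in the normalizer of a Cartan subgroup. Each such degeneracy realizes $E$ as an $F$-point on a modular curve ($X_0(p)$, $X_{\mathrm{ns}}^{+}(p)$, or one of their Atkin--Lehner quotients), so the non-modular exceptions inject into the set of $F$-rational points of a fibre product of finitely many such curves. The strategy is therefore to show that this exceptional locus contains no elliptic curve that is both non-modular and non-CM: CM curves are modular directly, and the finitely many non-cuspidal, non-CM points must be ruled out one by one.

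The main obstacle is exactly this last step carried out \emph{uniformly over all totally real fields}. For a fixed field---or for a fixed degree, where the finiteness criterion of Abramovich--Harris and Faltings recalled above bounds the low-degree points on the relevant curves of large genus---the exceptional locus is finite and can in principle be analysed. But as $[F:\Q]$ grows without bound the modular curves acquire ever more low-degree points, and no present technique controls them simultaneously for all $F$; this is precisely why the unconditional results of this paper are confined to degree $5$ outside a finite exceptional set. Closing this gap for arbitrary totally real $F$---equivalently, proving that the exceptional points on these modular curves never yield a non-modular elliptic curve over any totally real field---is what would upgrade the partial theorems to the full conjecture, and it remains open.
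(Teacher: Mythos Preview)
The statement you were given is \emph{Conjecture}~\ref{Conjecture:Modularity}, not a theorem: the paper does not prove it, and indeed explicitly presents it as open in general for $[F:\Q]\geq 4$. There is therefore no proof in the paper to compare your proposal against. Your write-up correctly recognises this---you end by saying the full conjecture ``remains open''---so what you have written is not a proof but an accurate survey of the known strategy and its current limits.

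As such a survey, your outline is essentially faithful to the approach described in Section~\ref{Section:Strategy}: modularity lifting at small primes $p\in\{3,5,7\}$ (via Freitas--Le~Hung--Siksek, Thorne, Kalyanswamy), reduction of the residual-degenerate cases to rational points on finitely many modular curves, and the observation that controlling these points uniformly over all totally real fields is the obstruction. The paper's contribution is precisely to carry out the last step for degree~$5$ up to a finite exceptional set, using the finiteness criterion of Theorem~\ref{Theorem:FinitenessLowDegreePoints2}; it does not, and does not claim to, resolve the conjecture itself.
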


Since the breakthrough of Wiles and Taylor--Wiles,
we know the modularity of many elliptic curves over
totally real fields.
Conjecture \ref{Conjecture:Modularity}
is completely proved when $[F : \Q] \leq 3$;
see \cite{Breuil-Conrad-Diamond-Taylor},
\cite{Freitas-Le Hung-Siksek}, \cite{Derickx-Najman-Siksek}.
Although Conjecture \ref{Conjecture:Modularity} is known to be true for
certain totally real fields of higher degree
(e.g.,
\cite{Yoshikawa1}, \cite{Yoshikawa2}, \cite{Thorne:Cyclotomic}),
it is still open in general when $[F : \Q] \geq 4$.
In his recent paper,
Box proved Conjecture \ref{Conjecture:Modularity}
for totally real fields of degree $4$
not containing $\sqrt{5}$; see \cite{Box:Modularity}.

There are finiteness results for
possibly non-modular elliptic curves.
Le Hung proved in his Ph.D Thesis that,
for every totally real field $F \subset \R$,
there exist only finitely many
$\overline{\Q}$-isomorphism classes of
non-modular elliptic curves defined
over totally real quadratic extensions of $F$;
see \cite[Theorem 1.1]{Le Hung:Thesis}.
Combining Le Hung's result for $F = \Q(\sqrt{5})$
with Box's results,
Conjecture \ref{Conjecture:Modularity} holds true for
all but finitely many totally real fields of degree $4$.

In this paper,
we shall study the case of totally real fields of degree $5$.
The following is the main theorem of this paper.

\begin{thm}
\label{MainTheorem}
There is a finite list of pairs
$(F_1,\alpha_1),\ldots,(F_r,\alpha_r)$,
where $F_i \subset \R$ is a totally real field of degree $5$
and $\alpha_i \in F_i$,
satisfying the following condition:
\begin{quote}
Let $E$ be an elliptic curve
over a totally real field $F \subset \R$ of degree $5$.
If $(F,j(E)) \neq (F_i, \alpha_i)$ for every $1 \leq i \leq r$,
then $E$ is modular.
\end{quote}
\end{thm}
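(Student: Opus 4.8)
The plan is to combine a \emph{modularity reduction} --- which rephrases the non-modularity of $E$ as the existence of a point of degree at most $5$ on one of finitely many modular curves --- with finiteness statements for such points, the latter resting on our criterion for quintic points on curves of large genus together with a hands-on analysis of the low-genus cases.

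For the modularity reduction I would follow the $3$--$5$--$7$ strategy of Freitas--Le~Hung--Siksek, as refined by Thorne, Box and others, and feed in the modularity lifting theorems currently available over totally real fields: modularity of $E$ when $\bar\rho_{E,3}$ is (absolutely) irreducible, via Langlands--Tunnell together with Skinner--Wiles-- and Kisin-type lifting; Thorne's theorem giving modularity when $\bar\rho_{E,5}$ is irreducible, which applies unconditionally here because $[F:\Q]=5$ forces $\sqrt5\notin F$; the known treatment of reducible residual representations; and the corresponding considerations at $7$. A recurring simplification in degree $5$ is that $F$ has no proper subfield other than $\Q$ --- so $\sqrt5,\sqrt{-3}\notin F$ and $F$ contains no cubic field --- which kills several of the exceptional configurations that occur in lower degree. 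The upshot would be an explicit finite list $X_1,\dots,X_m$ of modular curves over $\Q$: fiber products over $X(1)$ of two or three of the curves $X_0(\ell)$, $X_{\mathrm{sp}}(\ell)=X_0(\ell^2)/w_{\ell^2}$ and $X^+_{\mathrm{ns}}(\ell)$ with $\ell\in\{3,5,7\}$ --- so in particular $X_0(15),X_0(21),X_0(35)$ --- together with certain Atkin--Lehner quotients, such that every non-modular elliptic curve $E$ over a totally real $F$ with $[F:\Q]=5$ satisfies $j(E)=j(P)$ for some $i$ and some $P\in X_i(F)$.

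Before bounding points I would dispose of the case $j(E)\in\Q$. Choosing $E_0/\Q$ with $j(E_0)=j(E)$, the curve $E$ is a twist of $E_0\times_\Q F$ by a character of $G_F$ of order dividing $6$; since $E_0$ is modular (Wiles, Breuil--Conrad--Diamond--Taylor), since modularity of an elliptic curve over $\Q$ persists under base change to a totally real field (a consequence of the modularity lifting machinery), and since twisting a Hilbert modular form by a finite-order Hecke character preserves automorphy --- and since for $j(E)\in\{0,1728\}$ the curve $E$ has complex multiplication, hence is modular over every number field --- all such $E$ are modular. Consequently every potentially exceptional pair is of the form $\bigl(\Q(j(E)),\,j(E)\bigr)$ with $[\Q(j(E)):\Q]=5$, and it suffices to prove that for each $i$ the set
\[
\bigl\{\, j(P)\ :\ P\in X_i(\overline\Q),\ [\Q(j(P)):\Q]=5,\ \Q(j(P))\ \text{totally real}\,\bigr\}
\]
is finite; a fortiori it suffices to bound the closed points of $X_i$ of degree at most $5$.

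For the curves $X_i$ of sufficiently large genus I would invoke our criterion, which --- through Faltings' theorem on rational points of subvarieties of abelian varieties and the Abramovich--Harris description of degree-$5$ points in terms of $\Sym^5$ --- reduces the finiteness of quintic points to a gonality bound for $X_i$ together with the absence, in the relevant quotients of $\Jac(X_i)$, of positive-rank abelian subvarieties admitting a map of degree $\le5$ from $X_i$; verifying this for the $X_i$ on the list requires gonality estimates and a study of the Jacobians and their Atkin--Lehner quotients. The genuine difficulty is the handful of $X_i$ of small genus (such as $X_0(15)$ and $X_0(21)$), where quintic points are infinite: for these I would use the moduli interpretation to show that a non-modular $E$ in fact gives a point on a proper modular subcover of larger genus, or on an Atkin--Lehner quotient, to which the criterion does apply --- alternatively analyzing the finitely many families of low-degree points directly and checking that all but finitely many give modular curves. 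Collecting over $i$ the resulting finite set of exceptional $j$-invariants produces the list $(F_i,\alpha_i)$ with $F_i=\Q(\alpha_i)$. The main obstacle throughout is the interface between the two halves of the argument: one has to engineer the modularity reduction so that the modular curves it outputs are precisely the ones for which finiteness of quintic points can be certified (large enough gonality, and Jacobian quotients --- after passing to Atkin--Lehner quotients where necessary --- with the right Mordell--Weil behaviour), and then clean up the residual low-genus curves by hand.
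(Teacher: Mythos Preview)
Your plan has the right global shape, but the modularity reduction is miscalibrated, and this creates a phantom obstacle that then drives the rest of the proposal off course.

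The reduction does \emph{not} output double fiber products such as $X_0(15)$, $X_0(21)$, $X_0(35)$. Since Freitas--Le~Hung--Siksek give modularity whenever $\overline{\rho}_{E,p}|_{\Gal(\overline{\Q}/F(\zeta_p))}$ is absolutely irreducible for \emph{some} $p\in\{3,5,7\}$, a non-modular $E$ carries a level structure at \emph{each} of $3,5,7$ simultaneously. Thorne's theorem (which you correctly note applies because $\sqrt{5}\notin F$) forces $\mathrm{b5}$ at $5$; you are missing Kalyanswamy's theorem, which under $F\cap\Q(\zeta_7)=\Q$ forces the level at $7$ to be either $\mathrm{b7}$ or the index-$2$ subgroup $\mathrm{e7}\subset C^+_{\mathrm{ns}}(7)$ (not $C^+_{\mathrm{ns}}(7)$ itself). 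At $3$ one gets $\mathrm{b3}$ or $\mathrm{s3}$. So the list is exactly
\[
X_0(105),\quad X(\mathrm{s3},\mathrm{b5},\mathrm{b7}),\quad X(\mathrm{b3},\mathrm{b5},\mathrm{e7}),\quad X(\mathrm{s3},\mathrm{b5},\mathrm{e7}),
\]
of genera $13,21,73,153$. There is no ``handful of $X_i$ of small genus'' to clean up by hand; that part of your plan is addressing a problem that does not occur. Conversely, the work you propose for the low-genus curves (``use the moduli interpretation to show that a non-modular $E$ in fact gives a point on a proper modular subcover of larger genus'') is essentially the Kalyanswamy input you omitted, so you would have to reconstruct it anyway.

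Your description of the finiteness criterion is also off. The hypothesis is not ``absence of positive-rank abelian subvarieties admitting a map of degree $\le 5$ from $X_i$''; it is that every $\Q$-simple isogeny factor of $\Jac(X_i)$ of dimension \emph{one} has finite Mordell--Weil group (nothing is assumed about higher-dimensional factors), together with $g\ge 11$ and the nonexistence of a degree-$5$ morphism to $\PP^1$. The last condition is checked not by Abramovich's spectral lower bound alone but via the Castelnuovo--Severi inequality applied to a suitable Atkin--Lehner double cover. And the criterion does \emph{not} apply directly to $X(\mathrm{b3},\mathrm{b5},\mathrm{e7})$: its Jacobian has a rank-$1$ elliptic factor (coming from the newform $735.2.a.b$), so one must first descend along the degree-$4$ map to the quotient $X(\mathrm{b3},\mathrm{b5},\mathrm{ns7})/w_3$, where that factor is killed by the Atkin--Lehner sign, before invoking the criterion. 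Your proposal gestures at Atkin--Lehner quotients but does not identify this as the place where they are actually needed.

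Finally, your separate treatment of $j(E)\in\Q$ via base change and twisting is correct but unnecessary: once the four curves above are fixed, one checks directly that all their $\Q$-rational points are cusps (they cover $X_0(35)$ or $X(\mathrm{b5},\mathrm{ns7})$), so since $5$ is prime the residue field of any non-cuspidal point of degree $\le 5$ is automatically of degree exactly $5$.
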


Here $j(E)$ is the $j$-invariant of $E$.
Theorem \ref{MainTheorem} implies that
Conjecture \ref{Conjecture:Modularity} holds true
for all but finitely many totally real fields
$F \subset \R$ of degree $5$.

The strategy of our proof of Theorem \ref{MainTheorem}
is basically the same as
\cite{Freitas-Le Hung-Siksek}, \cite{Derickx-Najman-Siksek},
\cite{Box:Modularity};
see Section \ref{Section:Strategy} for details.
Thanks to recent advances on the modularity lifting theorems,
it is enough to show that
each of the following modular curves
\[
X_0(105),  \quad
X(\mathrm{s3},\mathrm{b5},\mathrm{b7}), \quad
X(\mathrm{b3},\mathrm{b5},\mathrm{e7}), \quad
X(\mathrm{s3},\mathrm{b5},\mathrm{e7}) 
\]
has only finitely many rational points of degree $5$.
These curves have rather large genera.
Their genera are $13$, $21$, $73$, $153$, respectively;
see \cite[Introduction]{Box:Modularity}.
Thus it is unlikely that we can obtain finiteness results
by explicit calculation.
Instead, we use geometric methods in this paper.

On the way, we prove a criterion for the finiteness of
rational points of degree $5$ on a curve of large genus
over a number field.

\begin{thm}[see Theorem \ref{Theorem:FinitenessLowDegreePoints2}]
\label{MainTheorem:Finiteness}
Let $C$ be a projective smooth geometrically integral curve
of genus $g(C)$ over a number field $K$.
Let $\Jac(C) \sim A_1 \times \cdots \times A_r$
be the decomposition of $\Jac(C)$, up to isogeny,
into the product of $K$-simple abelian varieties.
Assume that the following conditions are satisfied.
\begin{itemize}
\item $g(C) \geq 11$.
\item For every $i$ with $\dim A_i = 1$, the Mordell--Weil group $A_i(K)$ is finite.
\item There does not exist a non-constant morphism
$C \to \PP^1$ of degree $5$ defined over $K$.
\end{itemize}
Then the set of rational points of degree $5$ on $C$
\[ \{ x \in C(\overline{K}) \mid [K(x) : K] = 5 \} \]
is finite.
\end{thm}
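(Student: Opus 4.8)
The plan is to parametrize effective divisors of degree $5$ by the symmetric power $C^{(5)}$, pass to the Abel--Jacobi image in $\Jac(C)$, and play the dichotomy of Faltings' theorem on subvarieties of abelian varieties against the classification of Abramovich--Harris of positive-dimensional abelian subvarieties of $W_d(C)$. Suppose, for contradiction, that $C$ has infinitely many rational points of degree $5$. Each such $x$ gives a $K$-rational effective divisor $D_x := \sum_{\sigma} \sigma(x)$ of degree $5$, hence a $K$-point of $C^{(5)}$; in particular $\Pic^5(C)(K) \neq \emptyset$, so after fixing $D_0 \in \Pic^5(C)(K)$ and translating by $-D_0$ we may regard the image $W_5$ of the Abel--Jacobi map $C^{(5)} \to \Pic^5(C)$ as a subvariety of the abelian variety $\Jac(C)$, and we obtain an infinite family of classes $[D_x] \in W_5(K)$.

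The first, preliminary case is when the image set $S := \{[D_x]\} \subseteq W_5(K)$ is finite; then some fibre $|D_w|$ of $C^{(5)} \to \Pic^5(C)$ contains infinitely many degree-$5$ divisors, so $\dim|D_w| \geq 1$. Since a size-$5$ Galois orbit has no proper nonzero $K$-rational subdivisor, the (Galois-stable) fixed part of $|D_w|$ must vanish, so $|D_w|$ is a base-point-free $g^r_5$ with $r \geq 1$. The case $r = 1$ is a degree-$5$ morphism $C \to \PP^1$ over $K$, excluded by the third hypothesis; for $r \geq 2$ Clifford's inequality (and $\deg D_w = 5 < g(C)$) forces $r = 2$, and the associated morphism $C \to \PP^2$ is then birational onto a plane quintic, forcing $g(C) \leq 6$ and contradicting $g(C) \geq 11$. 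Hence $S$ is infinite.

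The main case is $S$ infinite. By Faltings' theorem the Zariski closure of $W_5(K)$ in $\Jac(C)$ is a finite union of translates of abelian subvarieties contained in $W_5$, all defined over $K$ by Galois-equivariance of the conclusion; since $S$ is infinite, $W_5$ contains a translate $B + w$ of a positive-dimensional abelian subvariety $B \subseteq \Jac(C)$ with $(B+w)(K)$ infinite. Replacing $B$ by a suitable $K$-simple abelian subvariety of positive rank (translates of which still lie in $W_5$), we may assume $B$ is $K$-simple with $B(K)$ infinite. Now invoke the results of Abramovich--Harris on positive-dimensional abelian subvarieties of $W_5(C)$. If $\dim B = 1$, a translate of an elliptic curve in $W_5$ produces a non-constant morphism $C \to E$ of degree $\leq 5$ over $K$ with $E$ isogenous to $B$; then $E$ occurs as a one-dimensional isogeny factor $A_i$ of $\Jac(C)$, so $A_i(K)$ is finite by hypothesis, contradicting that $B \sim E$ has positive Mordell--Weil rank. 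If $\dim B \geq 2$, the existence of a translate of a positive-dimensional abelian variety in $W_5$ is very restrictive: apart from configurations that force a degree-$5$ map $C \to \PP^1$ over $K$ or a degree-$\leq 5$ map to an elliptic curve (both already excluded, the latter exactly as above), the only remaining possibilities are the sporadic Debarre--Fahlaoui-type configurations, which all occur on curves of small genus and are therefore excluded by $g(C) \geq 11$. In every case we reach a contradiction, so the set of degree-$5$ points is finite.

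The main obstacle is the case $\dim B \geq 2$: one must extract from the precise statements of Abramovich--Harris, combined with Castelnuovo-type genus bounds, the assertion that once $g(C) \geq 11$ every positive-dimensional translated abelian subvariety of $W_5$ is accounted for either by a degree-$5$ map to $\PP^1$ defined over $K$ or by an elliptic-curve quotient of $\Jac(C)$, and one must check that the resulting morphisms are defined over $K$ and not merely over $\overline{K}$; the choice of the bound $11$ is dictated precisely by the genera of the Debarre--Fahlaoui examples. A secondary technical point, the descent of $B + w$ to $K$, is handled by the Galois-equivariance of Faltings' conclusion.
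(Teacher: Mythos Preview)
Your overall architecture (Abel--Jacobi map, Faltings' theorem, then Abramovich--Harris) matches the paper's, and your preliminary case and your $\dim B = 1$ case reach the right conclusions. In the $\dim B = 1$ case, though, the detour through ``a translate of an elliptic curve in $W_5$ produces a morphism $C \to E$ of degree $\leq 5$'' is both unnecessary and not obviously justified; it is enough that $B \subset \Jac(C)$ is already a one-dimensional $K$-simple abelian subvariety, hence isogenous to some $A_i$ with $\dim A_i = 1$, so $B(K)$ is finite directly from the hypothesis.

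The genuine gap is in the case $\dim B \geq 2$. Your asserted trichotomy --- that a translate of a $\geq 2$-dimensional abelian variety in $W_5(C)$ forces either a degree-$5$ map $C \to \PP^1$, or a low-degree map to an elliptic curve, or $g(C) \leq 10$ --- is false. The paper itself constructs counterexamples (Remark \ref{Remark:ExampleCurve} and Example \ref{Example:DoubleCover}): take a double cover $\pi \colon C \to B$ of a genus-$2$ curve $B$, ramified at enough points that $g(C) \geq 11$, and chosen so that $\Jac(C)$ has no elliptic factors and $C$ has no degree-$5$ map to $\PP^1$ (both arranged via Castelnuovo--Severi and a genericity argument). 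Then for any $x \in C$, the set $x + \pi^{\ast} W_2(B) = x + \pi^{\ast}\Jac(B)$ is a translate of a two-dimensional abelian variety sitting inside $W_5(C)$, and over a suitable number field $K$ it has infinitely many $K$-points. So you cannot hope to show that such $B$ do not exist, nor that $B(K)$ is finite; $W_5(C)(K)$ can genuinely be infinite under the hypotheses of the theorem.

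What the paper proves instead is more delicate: using Abramovich--Harris (specifically Lemmas 1, 3, 5, and the first part of Lemma 8 of \cite{Abramovich-Harris}, together with Clifford and Riemann--Roch, and it is here that the bound $g(C) \geq 11$ enters), one shows that any such translate $[D] + B \subset W_5(C)$ with $\dim B \geq 2$ lies either in $x + W_4(C)$ for some point $x \in C$, or in the image $\Delta$ of the big diagonal. The second alternative is irrelevant for classes of irreducible degree-$5$ divisors $D_y$ (which are reduced and, by your own preliminary argument, the unique effective divisors in their classes). In the first alternative, any two classes $[D_y], [D_z]$ lying in $[D]+B$ are represented by effective divisors sharing the point $x$ in their support; since $D_y$ is $K$-irreducible of prime degree, this forces $D_y = D_z$. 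Hence each translate $x_i + B_i$ with $\dim B_i \geq 2$ contains \emph{at most one} class coming from a degree-$5$ point, even though $(x_i + B_i)(K)$ may be infinite. That is the missing idea.
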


Theorem \ref{MainTheorem:Finiteness} seems new.
It might be of independent interest.
We shall prove Theorem \ref{MainTheorem:Finiteness} using
the results of  Abramovich--Harris \cite{Abramovich-Harris}
on the structure of abelian varieties embedded in
the Brill--Noether loci of Jacobian varieties,
and a theorem of Faltings \cite{Faltings:Lang}
on a generalization of Mordell's conjecture (Lang's conjecture).
It is possible to generalize
Theorem \ref{MainTheorem:Finiteness}
to rational points of degree $d$, at least when $d$ is a prime number;
see Remark \ref{Remark:GenezalizationFiniteness}.
Note that we do not assume anything on $K$-simple factors of dimension $\geq 2$.
In particular, the second condition holds if $\Jac(C)$ does not
contain any elliptic curve over $K$.

In order to prove Theorem \ref{MainTheorem},
we shall apply Theorem \ref{MainTheorem:Finiteness}
to some modular curves over $\Q$.
In fact,  we shall also apply it
to an Atkin--Lehner quotient of a modular curve;
see Remark \ref{Remark:WhyAtkinLehnerquotient}.
It is not difficult to check the first and the third conditions.
We shall check the second condition in the following way.
Since all cusp forms of weight $2$ corresponding to $\Q$-simple factors
of the Jacobians in this paper have analytic rank $\leq 1$,
the rank part of the Birch and Swinnerton-Dyer conjecture is known
to be true for them; see Remark \ref{Remark:BSDknowncases}.
Thus, all we need to do is to show that,
among the cusp forms of weight $2$ on the modular curves in this paper,
all cusp forms whose Fourier coefficients are in $\Q$ have analytic rank $0$.
Fortunately, it is a routine exercise today thanks to
\textit{the L-functions and modular forms database (LMFDB)} \cite{LMFDB}.
See Section \ref{Section:Strategy} and Appendix \ref{Appendix:Calculation}
for more details.

The outline of this paper is as follows.
In Section \ref{Section:Strategy},
we explain the strategy of the proof.
In Section \ref{Section:FinitenessCriterion},
we give three theorems
(Theorem \ref{Theorem:AbramovichFrey},
Theorem \ref{Theorem:FinitenessLowDegreePoints1},
Theorem \ref{Theorem:FinitenessLowDegreePoints2}
(= Theorem \ref{MainTheorem:Finiteness}))
on the finiteness of low degree points on curves over number fields.
Theorem \ref{Theorem:AbramovichFrey} and
Theorem \ref{Theorem:FinitenessLowDegreePoints1}
are well-known.
Theorem \ref{Theorem:FinitenessLowDegreePoints2}
is proved in Section \ref{Section:FinitenessCriterion}
using the results of Abramovich--Harris and Faltings.
In Section \ref{Section:CastelnuovoSeveriInequality},
we recall the Castelnuovo--Severi inequality,
which is a main tool to study the gonality of modular curves.
The main results are proved in
Section \ref{Section:ModularCurves}
assuming some results on the modular curves.
Finally, in Appendix \ref{Appendix:Calculation},
these results are proved using LMFDB.

The datasets generated during the current study are publicly available as ancillary files at \texttt{arXiv:2110.04078}. 

\begin{rem}
It is a challenging problem to explicitly give
a finite list of ($\overline{\Q}$-isomorphism classes of)
possibly non-modular elliptic curves over a totally real field
$F \subset \R$ of degree $5$.
Finding such a list amounts to finding 
all rational points of degree $5$
on some modular curves of large genera;
see Section \ref{Section:Strategy}.
However, as well as Le Hung's results
\cite[Theorem 1.1]{Le Hung:Thesis},
our results are not effective
because the proof relies on a theorem of Faltings \cite{Faltings:Lang}.
\end{rem}

\begin{rem}
It is natural to ask whether it is possible to obtain
similar results for totally real fields of degree $6$ (or higher).
There are apparent difficulties to study rational points of degree $6$ on $X_0(105)$
because there exists a non-constant morphism
$X_0(105) \to \PP^1$ of degree $6$ over $\Q$;
see Proposition \ref{Proposition:GonalityX0(105)}.
Pulling back $\Q$-rational points,
we obtain infinitely many rational points of degree $6$ on $X_0(105)$.
It might be the case that there are infinitely many
($\overline{\Q}$-isomorphism classes of)
elliptic curves over totally real fields of degree $6$
whose associated mod $p$ Galois representations are reducible
for all $p \in \{ 3,5,7 \}$.
Currently, it seems difficult to prove the modularity of
such elliptic curves.
Therefore, without introducing new ideas or techniques,
it seems difficult to generalize
Theorem \ref{MainTheorem} to totally real fields of degree $6$.
\end{rem}

\begin{rem}
\label{Remark:BSDknowncases}
In this paper, we only consider cusp forms
of weight $2$ and with trivial character.
Using LMFDB, we see that all cusp forms
appearing in this paper have analytic rank $\leq 1$;
see Appendix \ref{Appendix:Calculation}.
Hence the rank part of the Birch and Swinnerton-Dyer conjecture
holds for abelian varieties of $\GL_2$-type
associated with them
by the theorems of Gross--Zagier and Kolyvagin--Logachev;
see
\cite[Theorem A]{Zhang:HeegnerPoints}, \cite[Theorem 3.7]{Zhang:Survey}.
In particular, if $f$ has analytic rank $0$,
the associated abelian variety $A_f$ has finite Mordell--Weil
group over $\Q$, i.e.,\ $A_f(\Q)$ is finite.
(In the case of analytic rank $0$,
another proof was given by Kato \cite[Corollary 14.3]{Kato:EulerSystem}.
Kato also proved the rank $0$ case for cusp forms of weight $2$
with non-trivial character;
see the paragraph after \cite[Corollary 14.3]{Kato:EulerSystem}.)
\end{rem}

\section{Strategy of the proof}
\label{Section:Strategy}

To prove Theorem \ref{MainTheorem},
we basically follow the same strategy as
\cite{Freitas-Le Hung-Siksek}, \cite{Derickx-Najman-Siksek},
\cite{Box:Modularity}.

We briefly explain the background of our methods.
Let $E$ be a elliptic curve over a totally real field $F$.
To simplify the discussion, we always assume
a totally real field is embedded into $\R$.
Let $\overline{\rho}_{E,p} \colon \Gal(\overline{\Q}/F) \to \GL_2(\F_p)$
be the mod $p$ Galois representation associated with
the $p$-torsion points on $E$.
Let $\zeta_p \coloneqq \exp(2 \pi \sqrt{-1}/p)$
be a primitive $p$-th root of unity.

The following results are known.
\begin{itemize}
\item
Based on the famous $(3,5)$-trick of Wiles
(and some ideas of Manoharmayum on the $(3,7)$-trick),
Freitas--Le Hung--Siksek proved that $E$ is modular if
$\overline{\rho}_{E,p}|_{\Gal(\overline{\Q}/F(\zeta_p))}$
is absolutely irreducible for some $p \in \{ 3,5,7 \}$;
see \cite[Theorem 3, Theorem 4]{Freitas-Le Hung-Siksek}.

\item Thorne proved that $E$ is modular if
$F$ does not contain $\sqrt{5}$
and $\overline{\rho}_{E,5}$ is irreducible;
see \cite[Theorem 1.1]{Thorne}.

\item Generalizing Thorne's techniques,
Kalyanswamy proved that $E$ is modular if
$F \cap \Q(\zeta_7) = \Q$,
$\overline{\rho}_{E,7}$ is irreducible,
and
$\overline{\rho}_{E,7}(\Gal(\overline{\Q}/F))$
is not conjugate to a subgroup of
$C_{\mathrm{ns}}^{+}(7)$;
see \cite[Theorem 1.2]{Kalyanswamy}.
\end{itemize}

By the above results,
if a non-modular elliptic curve $E$ over $F$ exists,
it corresponds to an $F$-rational point of
one of the following modular curves;
see \cite[Theorem 1.2]{Box:Modularity}.
\[
X(\mathrm{b3},\mathrm{b5},\mathrm{b7}) = X_0(105),  \quad
X(\mathrm{s3},\mathrm{b5},\mathrm{b7}), \quad
X(\mathrm{b3},\mathrm{b5},\mathrm{e7}), \quad
X(\mathrm{s3},\mathrm{b5},\mathrm{e7}) 
\]
Here we use the notation of
\cite{Freitas-Le Hung-Siksek}, \cite{Derickx-Najman-Siksek},
\cite{Box:Modularity}.
The symbols in the parentheses denote
the level structures defining them.
For $p \in \{ 3,5,7 \}$,
$\mathrm{b}p$ is a Borel subgroup of $\GL_2(\F_p)$,
and
$\mathrm{s}p$ is the normalizer of
a split Cartan subgroup of $\GL_2(\F_p)$.
For $p = 7$,
the symbol $\mathrm{e7}$ corresponds to
a particular subgroup of $\GL_2(\F_7)$ of order $48$
which is a subgroup of index $2$ of $C_{\mathrm{ns}}^{+}(7)$,
where $C_{\mathrm{ns}}^{+}(7)$ is
the normalizer of a non-split Cartan subgroup of $\GL_2(\F_7)$.

These modular curves have rather large genera.
Their genera are $13$, $21$, $73$, $153$, respectively.
It seems infeasible to determine the set of low degree points explicitly.
Instead, we use geometric methods to prove
the finiteness of rational points of degree $5$.

First, we study the set of $\Q$-rational points.

\begin{lem}
\label{Lemma:Cusp}
All the $\Q$-rational points on the following modular curves
\[
X_0(105), \quad
X(\mathrm{s3},\mathrm{b5},\mathrm{b7}), \quad
X(\mathrm{b3},\mathrm{b5},\mathrm{e7}), \quad
X(\mathrm{s3},\mathrm{b5},\mathrm{e7})
\]
are the cusps.
\end{lem}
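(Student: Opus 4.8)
The plan is to split the four curves into two groups. The first, $X_0(105)$, yields to a classical input: a non-cuspidal $\Q$-rational point of $X_0(N)$ corresponds to an elliptic curve over $\Q$ carrying a $\Q$-rational cyclic isogeny of degree $N$, and by the classification of such $N$ (Mazur for $N$ prime, completed by Kenku and others for composite $N$) no elliptic curve over $\Q$ admits a rational cyclic $105$-isogeny; since $105$ is squarefree all eight cusps of $X_0(105)$ are $\Q$-rational, and therefore $X_0(105)(\Q)$ consists exactly of those cusps. The substantive part is thus the three Cartan--Borel curves $X(\mathrm{s3},\mathrm{b5},\mathrm{b7})$, $X(\mathrm{b3},\mathrm{b5},\mathrm{e7})$, $X(\mathrm{s3},\mathrm{b5},\mathrm{e7})$, which are not of the form $X_0(N)$ but are precisely the modular curves whose sets of $\Q$-rational points were determined by Box in the degree-$4$ case; I would quote \cite{Box:Modularity} for these.

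If instead one wishes to reprove the Cartan--Borel cases, the method is as follows. Fix a $\Q$-rational cusp $P_0$ on the curve $X$ in question and let $\iota\colon X\hookrightarrow\Jac(X)$ be the associated Abel--Jacobi embedding over $\Q$ (an embedding since $g(X)\ge 2$). Using LMFDB, write $\Jac(X)$ up to isogeny as a product of $\Q$-simple factors attached to Galois orbits of weight-$2$ newforms; by the analytic-rank data (Appendix~\ref{Appendix:Calculation}) together with Remark~\ref{Remark:BSDknowncases}, the factors of analytic rank $0$ assemble into a quotient $A$ of $\Jac(X)$ over $\Q$ with $A(\Q)$ finite. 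The composite $\phi\colon X\to A$ then satisfies $\phi(P_0)=0$ and $X(\Q)\subseteq\phi^{-1}(A(\Q))$, so it remains only to rule out non-cuspidal $\Q$-points of $X$ mapping into the finite set $A(\Q)$. I would do this by the Mordell--Weil sieve: for several primes $\ell\ge 3$ of good reduction one has compatible reduction maps $X(\Q)\to X(\F_\ell)$ and $A(\Q)\hookrightarrow A(\F_\ell)$ (reduction being injective on torsion), so the image of $X(\Q)$ in $\prod_\ell A(\F_\ell)$ lies in the intersection of the image of $\prod_\ell X(\F_\ell)$ with the image of $A(\Q)$; a finite computation over enough small primes shows that this intersection is accounted for entirely by the reductions of the cusps.

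The main obstacle is making this sieve terminate for the high-genus curves, above all $X(\mathrm{s3},\mathrm{b5},\mathrm{e7})$ of genus $153$: one needs the rank-$0$ quotient $A$ to be rich enough that $\phi$ separates the cusps after reduction and that the small-prime information kills every spurious residue tuple. When direct computation is unwieldy, it is cleaner to push $X$ through a degeneracy map to a lower-level modular curve, or an Atkin--Lehner quotient, whose $\Q$-rational points are already known, and then check which residues can lift. All the remaining ingredients --- locating the cusps and their fields of definition, extracting the isogeny decomposition and analytic ranks from LMFDB, and the prime-by-prime verification --- are routine; so I would present Lemma~\ref{Lemma:Cusp} as a consequence of the classification of rational cyclic isogenies (for $X_0(105)$) and of \cite{Box:Modularity} together with Remark~\ref{Remark:BSDknowncases} (for the other three curves).
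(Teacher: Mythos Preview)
Your approach is workable in principle but far heavier than the paper's, and one of your citations is shaky. The paper's proof is a two-line structural argument: forget the level-$3$ structure to obtain finite maps $X_0(105)\to X_0(35)$ and $X(\mathrm{s3},\mathrm{b5},\mathrm{b7})\to X_0(35)$, and similarly $X(\mathrm{b3},\mathrm{b5},\mathrm{e7}),\,X(\mathrm{s3},\mathrm{b5},\mathrm{e7})\to X(\mathrm{b5},\mathrm{e7})\to X(\mathrm{b5},\mathrm{ns7})$; since these maps over $\Q$ send non-cusps to non-cusps, and since $X_0(35)$ and $X(\mathrm{b5},\mathrm{ns7})$ are already known to have only cuspidal $\Q$-points by \cite[Theorems~6,~7]{Derickx-Najman-Siksek}, the lemma follows at once. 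You actually mention this idea (``push $X$ through a degeneracy map to a lower-level modular curve\ldots whose $\Q$-rational points are already known'') but only as a computational fallback for a Mordell--Weil sieve, whereas in the paper it \emph{is} the whole proof --- no sieve, no Jacobian decomposition, no LMFDB lookup is needed for this lemma.

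Your treatment of $X_0(105)$ via the Mazur--Kenku classification of rational cyclic isogenies is correct but overkill; mapping to $X_0(35)$ already suffices. More problematically, your plan to cite \cite{Box:Modularity} for the $\Q$-rational points of the three Cartan--Borel curves is not obviously justified: Box's paper is about \emph{quartic} points on these curves, and while the $\Q$-rational points may well be handled implicitly in his computations, it is not the stated content of his results. Relying on that citation without checking would leave a gap; the degeneracy-map argument to $X_0(35)$ and $X(\mathrm{b5},\mathrm{ns7})$ avoids this entirely and requires no computation on curves of genus $21$, $73$, or $153$.
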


\begin{proof}
We have the following finite morphisms defined over $\Q$.
\[
X_0(105) \to X(\mathrm{b5},\mathrm{b7}) = X_0(35), \quad
X(\mathrm{s3},\mathrm{b5},\mathrm{b7}) \to X_0(35).
\]
Similarly, we have the following finite morphisms defined over $\Q$.
\[
X(\mathrm{b3},\mathrm{b5},\mathrm{e7})
\to X(\mathrm{b5},\mathrm{e7}), \quad
X(\mathrm{s3},\mathrm{b5},\mathrm{e7})
\to X(\mathrm{b5},\mathrm{e7})
\]
We also have a morphism
$X(\mathrm{b5},\mathrm{e7})
\to X(\mathrm{b5},\mathrm{ns7})$
of degree $2$ defined over $\Q$.
Here, $\mathrm{ns7}$ means the level structure at $7$ is
given by $C_{\mathrm{ns}}^{+}(7)$.
It is well-known that
all $\Q$-rational points on $X_0(35)$ and
$X(\mathrm{b5},\mathrm{ns7})$ are the cusps;
see \cite[Theorem 6, Theorem 7]{Derickx-Najman-Siksek}
for example.
Hence the same assertion holds true for
$X_0(105)$,
$X(\mathrm{s3},\mathrm{b5},\mathrm{b7})$,
$X(\mathrm{b3},\mathrm{b5},\mathrm{e7})$,
$X(\mathrm{s3},\mathrm{b5},\mathrm{e7})$.
\end{proof}

In the following, assume $[F : \Q] = 5$.
By Lemma \ref{Lemma:Cusp},
the residue field of a point on a modular curve
corresponding to a (hypothetical) non-modular elliptic curve
$E/F$ is equal to $F$.
The field of definition of a rational point
cannot be smaller than $F$.
Here we use the fact that $[F : \Q] = 5$ is prime.
To prove Theorem \ref{MainTheorem},
it is enough to prove the finiteness of rational points of degree $5$
on these curves.
For some technical reasons,
we shall not directly work on the curve
$X(\mathrm{b3},\mathrm{b5},\mathrm{e7})$.
Instead, we will work on
$X(\mathrm{b3},\mathrm{b5},\mathrm{ns7})/w_3$;
see Remark \ref{Remark:WhyAtkinLehnerquotient}.
We have the following sequence of degree $2$ morphisms
defined over $\Q$
\[
X(\mathrm{b3},\mathrm{b5},\mathrm{e7}) \to
X(\mathrm{b3},\mathrm{b5},\mathrm{ns7}) \to
X(\mathrm{b3},\mathrm{b5},\mathrm{ns7})/w_3.
\]
Hence the finiteness of rational points of degree $5$ on
$X(\mathrm{b3},\mathrm{b5},\mathrm{ns7})/w_3$
implies the finiteness of rational points of degree $5$ on
$X(\mathrm{b3},\mathrm{b5},\mathrm{e7})$ and
$X(\mathrm{b3},\mathrm{b5},\mathrm{ns7})$.

In summary, in order to prove Theorem \ref{MainTheorem},
it is enough to prove the following theorem.

\begin{thm}
\label{MainTheorem:FinitenessLowDegreePointsModularCurves}
Let $X$ be one of the following curves:
\[
X_0(105), \quad
X(\mathrm{s3},\mathrm{b5},\mathrm{b7}), \quad
X(\mathrm{b3},\mathrm{b5},\mathrm{ns7})/w_3, \quad
X(\mathrm{s3},\mathrm{b5},\mathrm{e7}).
\]
Then the set of rational points of degree $5$ on $X$
\[ \{ x \in X(\overline{\Q}) \mid [\Q(x) : \Q] = 5 \} \]
is finite.
\end{thm}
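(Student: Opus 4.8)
The plan is to apply Theorem \ref{MainTheorem:Finiteness} (= Theorem \ref{Theorem:FinitenessLowDegreePoints2}) to each of the four curves
$X_0(105)$, $X(\mathrm{s3},\mathrm{b5},\mathrm{b7})$, $X(\mathrm{b3},\mathrm{b5},\mathrm{ns7})/w_3$, $X(\mathrm{s3},\mathrm{b5},\mathrm{e7})$,
regarded as curves over $K = \Q$. For this we must verify the three hypotheses for each curve: (i) the genus is at least $11$; (ii) every elliptic curve appearing (up to isogeny) as a $\Q$-simple factor of the Jacobian has finite Mordell--Weil group over $\Q$; and (iii) there is no degree $5$ morphism to $\PP^1$ defined over $\Q$, i.e., the $\Q$-gonality of the curve is at least $6$. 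Once all three are checked, Theorem \ref{MainTheorem:Finiteness} immediately yields the finiteness of degree $5$ points on each $X$, which is exactly the assertion of Theorem \ref{MainTheorem:FinitenessLowDegreePointsModularCurves}.

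For hypothesis (i), I would record the genera. The curves $X_0(105)$, $X(\mathrm{s3},\mathrm{b5},\mathrm{b7})$, $X(\mathrm{s3},\mathrm{b5},\mathrm{e7})$ have genera $13$, $21$, $153$, respectively, all well above $11$; for $X(\mathrm{b3},\mathrm{b5},\mathrm{ns7})/w_3$ one computes the genus from the genus of $X(\mathrm{b3},\mathrm{b5},\mathrm{e7})$ (which is $73$) using the chain of degree $2$ maps together with the Riemann--Hurwitz formula (or equivalently by counting $w_3$-invariant differentials), and the result is again comfortably $\geq 11$. For hypothesis (ii), the Jacobians decompose over $\Q$ according to the newform decomposition of the relevant spaces of weight $2$ cusp forms. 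Using LMFDB one lists all the $\Q$-rational newforms occurring (those with rational Hecke eigenvalues, hence giving elliptic curve factors) and checks that each has analytic rank $0$; by the theorems of Gross--Zagier and Kolyvagin (see Remark \ref{Remark:BSDknowncases}) this forces the corresponding elliptic curve to have finite $\Q$-Mordell--Weil group. For the Atkin--Lehner quotient, the Jacobian of $X(\mathrm{b3},\mathrm{b5},\mathrm{ns7})/w_3$ is isogenous to the $w_3$-invariant part of $\Jac(X(\mathrm{b3},\mathrm{b5},\mathrm{ns7}))$, so one only needs to examine the relevant eigenforms. This is where Appendix \ref{Appendix:Calculation} does the bookkeeping.

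Hypothesis (iii) — showing each curve has $\Q$-gonality at least $6$ — is the main obstacle. The tool is the Castelnuovo--Severi inequality (recalled in Section \ref{Section:CastelnuovoSeveriInequality}): if $C$ admits maps to curves $C_1, C_2$ of genera $g_1, g_2$ and degrees $d_1, d_2$, with the product map birational onto its image, then $g(C) \leq d_1 g_1 + d_2 g_2 + (d_1-1)(d_2-1)$. A hypothetical degree $5$ map $f\colon X \to \PP^1$ over $\Q$ would have to be incompatible with the known low-degree maps from $X$ to smaller modular curves (e.g.\ $X_0(105) \to X_0(35)$, $X_0(105)\to X_0(21)$, $X_0(105)\to X_0(15)$, and the analogous quotient maps), because pairing $f$ with such a map and applying Castelnuovo--Severi would force a genus bound violated by the genera computed in step (i); one must also rule out $f$ factoring through one of those maps by a degree argument, since $5$ is prime. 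For the curves with a split Cartan structure at $3$, one additionally exploits the Atkin--Lehner involutions $w_5$, $w_7$ and the maps they induce to quotient curves, running the same Castelnuovo--Severi argument against each. The delicate point is to make sure we have enough independent low-degree maps defined over $\Q$ to exclude every possible $f$; for $X(\mathrm{b3},\mathrm{b5},\mathrm{ns7})/w_3$, which already is a quotient, one works with the maps it inherits from $X(\mathrm{b5},\mathrm{ns7})$ and its further Atkin--Lehner quotients. I expect these gonality lower bounds, organized curve by curve, to constitute the technical heart of Section \ref{Section:ModularCurves}; the rest is assembling the verified hypotheses and invoking Theorem \ref{MainTheorem:Finiteness}.
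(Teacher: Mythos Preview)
Your overall plan is in the right spirit, but the specific gonality argument you sketch for $X_0(105)$ does not work, and the paper's proof differs from yours in important ways for two of the four curves.

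The concrete gap is in hypothesis (iii). You propose to pair a hypothetical degree $5$ map $f\colon X_0(105)\to\PP^1$ with the forgetful maps $X_0(105)\to X_0(35)$, $X_0(21)$, $X_0(15)$. These have degrees $4$, $6$, $8$ and target genera $3$, $1$, $1$, so Castelnuovo--Severi gives only
\[
13 \leq 3\cdot 4 + 4\cdot 3 = 24,\quad 13 \leq 1\cdot 6 + 4\cdot 5 = 26,\quad 13 \leq 1\cdot 8 + 4\cdot 7 = 36,
\]
none of which is a contradiction. The paper instead uses the degree $2$ Atkin--Lehner quotient $X_0(105)\to X_0(105)/w_{35}$, whose target has genus $3$ (Lemma \ref{Lemma:X0(105)}); then Corollary \ref{Corollary:Castelnuovo-Severi:involution} yields $13\leq 2\cdot 3 + d - 1$, forcing $d\geq 8$. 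The same device, with $w_{35}$ and $w_5$ respectively, is what handles $X(\mathrm{s3},\mathrm{b5},\mathrm{b7})$ and $X(\mathrm{b3},\mathrm{b5},\mathrm{ns7})/w_3$. The moral is that Castelnuovo--Severi is sharpest against a degree $2$ map, so the relevant input is a well-chosen involution, not the higher-degree forgetful maps.

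Separately, the paper does not apply Theorem \ref{Theorem:FinitenessLowDegreePoints2} uniformly. For $X_0(105)$ it uses the simpler Theorem \ref{Theorem:FinitenessLowDegreePoints1}, since the entire Jacobian $J_0(105)$ has finite Mordell--Weil group. For $X(\mathrm{s3},\mathrm{b5},\mathrm{e7})$ it bypasses both theorems and invokes Theorem \ref{Theorem:AbramovichFrey} directly: Abramovich's lower bound (Theorem \ref{Theorem:gonalitylowerbound}) with the Kim--Sarnak constant gives gonality $\geq 15 > 2\cdot 5$, so Abramovich--Frey finishes immediately. This matters, because your uniform approach would require verifying hypothesis (ii) for $X(\mathrm{s3},\mathrm{b5},\mathrm{e7})$, a curve of genus $153$ whose Jacobian has not been decomposed in the paper; there is no guarantee that every elliptic-curve factor has rank $0$ (recall that exactly this failure for $X(\mathrm{b3},\mathrm{b5},\mathrm{ns7})$, via the newform 735.2.a.b, is why one passes to the $w_3$-quotient in the first place; see Remark \ref{Remark:WhyAtkinLehnerquotient}).
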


The proof of this theorem will be given in the rest of this paper.

\section{Finiteness criteria for low degree points}
\label{Section:FinitenessCriterion}

\subsection{Statements}

In this section, we give three theorems concerning
the finiteness of rational points of low degree.
Let $C$ be a projective smooth geometrically integral
curve over a field $K$.
The genus of $C$ is denoted by $g(C)$.
The \textit{$K$-gonality} of $C$
is the smallest degree of a non-constant morphism $C \to \PP^1$
defined over $K$.

In the following, let $K$ be a number field.
Then, the $\overline{K}$-gonality is
equal to the $\C$-gonality for any embedding
$K \hookrightarrow \C$.
The $K$-gonality of $C$ is sometimes larger
than $\overline{K}$-gonality of $C$.
For $d \geq 1$, the set of rational points of degree $d$ on $C$
\[ \{ x \in C(\overline{K}) \mid [K(x) : K] = d \} \]
has been studied by several people
\cite{Harris-Silverman}, \cite{Debarre-Klassen}, \cite{Frey}.

\begin{rem}
Harris--Silverman proved that the set of rational points of degree $2$ on $C$
is finite if $C$ is neither hyperelliptic nor bielliptic
\cite[Corollary 3]{Harris-Silverman}.
This result is used to prove Le Hung's results \cite[Theorem 1.1]{Le Hung:Thesis}
mentioned in the Introduction.
\end{rem}

For general $d \geq 1$, the following result is obtained
by Abramovich and Frey; see \cite[Proposition 2]{Frey}.
As explained in  \cite[Section 2.1]{Abramovich:Gonality},
it also follows immediately from
\cite[Lemma 1]{Abramovich-Harris}.

\begin{thm}[Abramovich, Frey]
\label{Theorem:AbramovichFrey}
If the $\overline{K}$-gonality of $C$ is larger than $2d$,
then the set of rational points of degree $d$ on $C$
is finite.
\end{thm}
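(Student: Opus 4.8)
The plan is to argue by contraposition: supposing the set of rational points of degree $d$ on $C$ is infinite, I would deduce that the $\overline{K}$-gonality of $C$ is at most $2d$. The first step is to pass to the $d$-th symmetric power $C^{(d)}$: a rational point $x$ of degree $d$ gives rise to an effective divisor of degree $d$ defined over $K$ (the sum of the Galois conjugates of $x$), hence to a $K$-rational point of $C^{(d)}$, and distinct Galois orbits give distinct such points. Since each orbit is finite, an infinite set of degree-$d$ points produces an infinite set $C^{(d)}(K)$. (The gonality hypothesis together with the Brill--Noether bound $\mathrm{gon}(C)\le\lfloor (g(C)+3)/2\rfloor$ also forces $g(C)>4d-3$, so in particular $d<g(C)$; this is reassuring but not needed below.)

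Next I would consider the Abel--Jacobi morphism $u\colon C^{(d)}\to\Pic^d(C)$, whose image is the Brill--Noether locus $W_d$, a closed subvariety of the $\Jac(C)$-torsor $\Pic^d(C)$, and distinguish two cases. If $u\bigl(C^{(d)}(K)\bigr)$ is \emph{finite}, then by the pigeonhole principle some fibre $u^{-1}(\xi)$ over a $K$-rational point $\xi$ contains infinitely many $K$-points, hence is positive-dimensional. This fibre is the complete linear system $|\xi|$, which is a Brauer--Severi variety over $K$ with a $K$-point, hence a projective space over $K$ of dimension $\ge 1$, while $\deg\xi=d$. Choosing a pencil in $|\xi|$ defined over $K$ and deleting its base locus (a Galois-stable, hence $K$-rational, effective divisor) yields a base-point-free pencil of degree $\le d$, i.e.\ a non-constant morphism $C\to\PP^1$ of degree $\le d$ defined over $K$. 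Thus the $\overline{K}$-gonality of $C$ is at most $d\le 2d$.

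If instead $u\bigl(C^{(d)}(K)\bigr)$ is \emph{infinite}, then $W_d(K)$ is infinite. I would fix a $K$-point of $W_d$, use it to translate $W_d$ into the abelian variety $\Jac(C)$, and apply Faltings' theorem on rational points of subvarieties of abelian varieties \cite{Faltings:Lang}: $W_d(K)$ is contained in a finite union of translates of abelian subvarieties of $\Jac(C)$ that lie inside $W_d$. Infinitude of $W_d(K)$ forces one of these translates to be positive-dimensional, so $W_d$ contains a translate of a positive-dimensional abelian subvariety. At this point I would invoke the structure result of Abramovich--Harris \cite[Lemma~1]{Abramovich-Harris}, to the effect that a translate of a positive-dimensional abelian variety inside $W_d(C)$ forces a non-constant morphism $C\to\PP^1$ of degree $\le 2d$ over $\overline{K}$. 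Hence again the $\overline{K}$-gonality of $C$ is at most $2d$.

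In either case the $\overline{K}$-gonality of $C$ is at most $2d$, contradicting the hypothesis, and the theorem follows. The symmetric-power reduction and the linear-system bookkeeping in the first case are routine, and Faltings' theorem enters only as a black box to separate the two cases. The one substantial geometric ingredient is the Abramovich--Harris lemma used in the second case, which extracts a pencil of degree $\le 2d$ on $C$ from an abelian subvariety of the Brill--Noether locus $W_d$ by analysing the difference map $C^{(d)}\times C^{(d)}\to\Jac(C)$ and the $[-1]$-symmetry of $W_d-W_d$; I expect that to be the only real obstacle.
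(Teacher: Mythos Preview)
Your argument is correct and is precisely the standard proof the paper points to: the paper does not give its own argument but cites \cite[Proposition~2]{Frey} and notes (following \cite[Section~2.1]{Abramovich:Gonality}) that it follows from \cite[Lemma~1]{Abramovich-Harris}, which is exactly the Faltings-plus-Abramovich--Harris route you take. Your Case~1 (positive-dimensional fibre of the Abel--Jacobi map) and Case~2 (positive-dimensional abelian translate in $W_d$ via Faltings, then the $[D]+A\subset W_d \Rightarrow h^0(2D)\ge 2$ trick) together reproduce Frey's argument faithfully.
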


Theorem \ref{Theorem:AbramovichFrey}
does not say anything when the $\overline{K}$-gonality of $C$
is smaller than or equal to $2d$.

In this paper, we shall use two more theorems
on the finiteness of rational points of low degree.
The first one is easy and certainly well-known.

\begin{thm}
\label{Theorem:FinitenessLowDegreePoints1}
Assume that the Mordell--Weil group $\Jac(C)(K)$ is finite,
and there does not exist a non-constant morphism
$C \to \PP^1$ of degree $d$ defined over $K$.
Then the set of rational points of degree $d$ on $C$ is finite.
\end{thm}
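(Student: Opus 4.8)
The plan is to use the symmetric power $C^{(d)}$ and the map to the Jacobian. First I would note that a rational point $x \in C(\overline{K})$ with $[K(x):K] = d$ gives, by taking the Galois orbit of $x$, an effective $K$-rational divisor $D_x = \sum_{\sigma} x^\sigma$ of degree $d$ on $C$; equivalently a $K$-rational point of the symmetric power $C^{(d)}$. Fixing once and for all a point $P_0 \in C(\overline{K})$ defined over a fixed finite extension (or working with the Albanese map associated to a degree-$d$ line bundle over $K$ if one exists, though this is not needed), the Abel--Jacobi morphism $\phi \colon C^{(d)} \to \Jac^{(d)}(C)$ sending $D \mapsto [\mathcal{O}_C(D)]$ is defined over $K$, where $\Jac^{(d)}(C)$ is the degree-$d$ component of the Picard scheme, a torsor under $\Jac(C)$.

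Next I would exploit the hypothesis that $\Jac(C)(K)$ is finite. The image $\phi(C^{(d)}(K)) \subset \Jac^{(d)}(C)(K)$ is a subset of a $\Jac(C)(K)$-torsor; since $\Jac(C)(K)$ is finite, $\Jac^{(d)}(C)(K)$ is finite (it is either empty or a torsor under a finite group). Hence $\phi(C^{(d)}(K))$ is a finite set. Therefore $C^{(d)}(K)$ is covered by finitely many fibers $\phi^{-1}(\xi)$ for $\xi \in \Jac^{(d)}(C)(K)$, and it suffices to show each such fiber meets $C^{(d)}(K)$ in a finite set. The fiber $\phi^{-1}(\xi)$, when nonempty, is the complete linear system $|L|$ associated to some line bundle $L$ of degree $d$, which is a projective space $\PP^{r}$ for $r = h^0(C,L) - 1$. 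If $r = 0$ the fiber is a single point and contributes at most one element; if $r \geq 1$, then $L$ is a base-point-free (after removing the base locus, reducing $d$) pencil-or-larger, and in particular $C$ admits a non-constant morphism to $\PP^1$ of degree $\leq d$ defined over $K$ — contradicting the second hypothesis unless the base locus is nonempty and forces the effective degree down, which still yields a $K$-morphism to $\PP^1$ of degree $< d$ upon clearing the fixed part and, composed appropriately, contradicts the no-degree-$d$-map assumption by a short case analysis. (More cleanly: the hypothesis rules out any $g^1_d$ over $K$; if some fiber has positive dimension it contains a $g^1_e$ with $e \le d$ defined over $K$, and one checks $e=d$ can be arranged or reduces to $\overline{K}$-gonality considerations.)

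Finally, having shown $C^{(d)}(K)$ is finite, I would descend to the original statement: the finitely many $K$-points of $C^{(d)}$ correspond to finitely many degree-$d$ effective $K$-rational divisors, and each such divisor has only finitely many points in its support, each of degree dividing $d$; the ones of degree exactly $d$ are precisely the rational points of degree $d$ on $C$ we wish to count. Hence $\{x \in C(\overline{K}) \mid [K(x):K] = d\}$ is finite. The main obstacle is the linear-systems bookkeeping in the middle step: one must argue carefully that a positive-dimensional fiber of $\phi$ produces a non-constant degree-$d$ (or reducible-to-degree-$d$) morphism $C \to \PP^1$ over $K$, handling base points correctly; everything else is formal once $\Jac(C)(K)$ is known to be finite.
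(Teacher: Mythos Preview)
Your overall architecture is the same as the paper's: send degree-$d$ points into $C^{(d)}(K)$, push forward to a torsor under $\Jac(C)$, and use finiteness of $\Jac(C)(K)$. The gap is in the middle step, where you try to prove that \emph{all} of $C^{(d)}(K)$ is finite by arguing that every positive-dimensional fiber of $\phi$ would produce a degree-$d$ morphism $C\to\PP^1$ over $K$. That implication is false. A positive-dimensional fiber $|L|$ with nonempty base locus only yields, after stripping the fixed part, a morphism of degree \emph{strictly less than} $d$; the hypothesis forbids degree exactly $d$, not degree $\le d$, so there is no contradiction. Your parenthetical ``one checks $e=d$ can be arranged'' is precisely the point that fails.

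Concretely: let $C/K$ be hyperelliptic of genus $g\ge 3$ with a $K$-point $p_0$ and with $\Jac(C)(K)$ finite. By Castelnuovo--Severi there is no degree-$3$ morphism $C\to\PP^1$ even over $\overline K$, so both hypotheses hold for $d=3$. But if $|H|$ denotes the hyperelliptic $g^1_2$, then $|H+p_0|$ is a $K$-rational linear system of degree $3$ with $h^0\ge 2$, so the corresponding fiber of $\phi$ is a positive-dimensional projective space with infinitely many $K$-points. Thus $C^{(3)}(K)$ is infinite, even though the set of degree-$3$ points on $C$ is finite. Your argument, as written, would wrongly conclude $C^{(3)}(K)$ is finite.

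The fix, and what the paper does, is to stop trying to control all of $C^{(d)}(K)$ and instead restrict to the subset $\Omega_d\subset C^{(d)}(K)$ of divisors $D_x$ coming from genuine degree-$d$ points. Such a $D_x$ is an \emph{irreducible} $K$-divisor (a single closed point). If $D_x\sim E$ with $E$ effective of degree $d$ and $E\neq D_x$, take $f\in K(C)^\times$ with $\mathrm{div}(f)=D_x-E$; irreducibility of $D_x$ forces $\Supp D_x\cap\Supp E=\emptyset$ (any common point would force $D_x\le E$, hence $D_x=E$), so there is no cancellation and $f$ gives a morphism $C\to\PP^1$ of degree exactly $d$ over $K$, contradicting the hypothesis. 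Hence $h^0(\mathscr O_C(D_x))=1$, the map $\Omega_d\to\Pic^d(C)$ is injective, and finiteness of $\Jac(C)(K)$ finishes the proof. The missing idea is exactly this use of irreducibility of $D_x$ to force the degree to be $d$ on the nose.
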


In the next theorem,
we only consider rational points of degree $5$.
We need to assume the genus is large.
Instead, we relax the assumptions on the Mordell--Weil group.

\begin{thm}
\label{Theorem:FinitenessLowDegreePoints2}
Let $C$ be a projective smooth geometrically integral curve
of genus $g(C)$ over a number field $K$.
Let $\Jac(C) \sim A_1 \times \cdots \times A_r$
be the decomposition of $\Jac(C)$, up to isogeny,
into the product of $K$-simple abelian varieties.
Assume that the following conditions are satisfied.
\begin{itemize}
\item $g(C) \geq 11$.
\item For every $i$ with $\dim A_i = 1$, the Mordell--Weil group $A_i(K)$ is finite.
\item There does not exist a non-constant morphism
$C \to \PP^1$ of degree $5$ defined over $K$.
\end{itemize}
Then the set of rational points of degree $5$ on $C$
\[ \{ x \in C(\overline{K}) \mid [K(x) : K] = 5 \} \]
is finite.
\end{thm}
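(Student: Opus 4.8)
The plan is to analyze the symmetric power $C^{(5)}$ and the Abel--Jacobi map $C^{(5)} \to \Jac(C)$ (after choosing a base point over a finite extension, which changes nothing for finiteness statements). A rational point of degree $5$ on $C$ gives rise to a $K$-rational point on $C^{(5)}$, namely the associated degree-$5$ effective divisor. Conversely, a $K$-point of $C^{(5)}$ that is \emph{not} a single Galois orbit of size $5$ either is a sum involving a point of degree $<5$, or moves in a pencil; so the first step is to argue that, outside a controlled locus, $K$-points of $C^{(5)}$ all come from degree-$5$ points, and that the degree-$<5$ contributions are already finite by induction using Theorem \ref{Theorem:FinitenessLowDegreePoints1} applied in lower degree together with the gonality hypothesis (no degree-$5$ map forces, via Castelnuovo-type arguments, strong restrictions on degree $\leq 4$ maps as well). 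The heart of the matter is therefore to understand the image $W_5 \subset \Jac(C)$ of $C^{(5)}$ and the positive-dimensional fibers of $C^{(5)} \to W_5$.

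Next I would invoke Faltings' theorem (Lang's conjecture for subvarieties of abelian varieties, \cite{Faltings:Lang}): the set of $K$-rational points on $W_5$ is contained in a finite union of translates of abelian subvarieties of $\Jac(C)$ contained in $W_5$. So it suffices to show that any positive-dimensional translated abelian subvariety $B + x \subseteq W_5$ forces the existence of a non-constant morphism $C \to \PP^1$ of degree $\leq 5$ over $K$, contradicting the third hypothesis, \emph{or} contributes only finitely many points because it meets the image of $C^{(5)}$ in a set mapping finitely to $C^{(5)}$. This is exactly where the Abramovich--Harris structure theory \cite{Abramovich-Harris} enters: their results classify abelian varieties sitting inside the Brill--Noether loci $W_d^r$ of a Jacobian, showing (roughly) that a translate of a positive-dimensional abelian variety inside $W_d$ of dimension close to $d$ forces either a low-degree map to $\PP^1$ or a map to a curve of positive genus whose own Jacobian accounts for the abelian subvariety. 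For $d = 5$ the numerology is tight: a positive-dimensional abelian subvariety in $W_5$ must either yield a $g^1_5$, a $g^1_4$, a $g^1_3$, a $g^1_2$ (hyperelliptic), or come from a degree-$2$ or degree-$3$ map onto a curve $C'$ of positive genus, with $B$ an isogeny factor of $\Jac(C')$.

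The final step is to rule out each of these possibilities using $g(C) \geq 11$ and the Mordell--Weil hypothesis on elliptic factors. The low-degree pencils ($g^1_d$ for $d \leq 5$) are excluded outright by the gonality hypothesis (a $g^1_d$ with $d \leq 4$ over $\overline K$ would, combined with no $K$-rational $g^1_5$, still need care — here one argues that a $\overline K$-gonality $\leq 4$ would produce a $K$-rational degree $\leq 5$ map by a standard Galois-descent/trace argument, or handle it via Theorem \ref{Theorem:AbramovichFrey} since gonality $> 10$ would finish everything immediately, so the interesting range is bounded gonality). For the maps $C \to C'$ of degree $2$ or $3$ onto a curve $C'$ of positive genus: if $g(C') \geq 2$ then $C'$ has finitely many $K$-points of degree $\leq 2$ (resp.\ bounded degree) by known low-degree results, pulling back to a finite set on $C$; if $g(C') = 1$ then $\Jac(C') = C'$ is one of the elliptic factors $A_i$, whose Mordell--Weil group is finite by hypothesis, so again only finitely many points are produced; the case $g(C')=0$ is the pencil case already handled. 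The condition $g(C) \geq 11$ is used to guarantee $\dim W_5 = 5$ and to ensure the Abramovich--Harris dimension estimates leave no room for an unclassified abelian subvariety — this dimension bookkeeping, together with the Galois-descent subtleties in passing between $\overline K$- and $K$-rational pencils, is what I expect to be the main obstacle, since one must be scrupulous that every positive-dimensional piece of the $K$-rational locus of $C^{(5)}$ is accounted for and that no finiteness claim secretly requires a $K$-rational structure that the Abramovich--Harris theorem (stated geometrically) does not provide.
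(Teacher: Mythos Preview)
Your broad architecture---map degree-$5$ points into $W_5 \subset \Jac(C)$, apply Faltings to obtain a finite union of translates $x_i + B_i$, and analyze each $B_i$ by dimension---matches the paper exactly, and your treatment of $\dim B_i = 0$ and $\dim B_i = 1$ is correct. The genuine gap is in the case $\dim B_i \geq 2$.

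You propose that Abramovich--Harris forces such a $B_i$ to arise from a map $C \to C'$ of degree $2$ or $3$, and then to bound low-degree points on $C'$. This is not what the relevant lemmas give: since $5$ is prime, any covering $\rho \colon C \to C'$ produced by \cite[Lemma~3]{Abramovich-Harris} has degree dividing $5$, hence degree $5$, and then $\rho^* W_1(C')$ is one-dimensional, already contradicting $\dim B_i \geq 2$. So your ``degree $2$ or $3$ cover'' branch simply does not arise. Worse, even granting your classification, the claim ``if $g(C') \geq 2$ then $C'$ has finitely many $K$-points of degree $\leq 2$'' is false in general, and Remark~\ref{Remark:ExampleCurve} exhibits curves satisfying all three hypotheses of the theorem for which $W_5(C)(K)$ is genuinely infinite. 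You therefore cannot hope to prove that $x_i + B_i(K)$ itself is finite when $\dim B_i \geq 2$.

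The paper's mechanism is different and sharper. Using several lemmas of \cite{Abramovich-Harris} together with Clifford's theorem and the bound $g \geq 11$, one proves (Proposition~\ref{Proposition:AbelianSurfaceinJacobian}) that any translate $[D] + A \subset W_5$ with $\dim A \geq 2$ lies either in the big diagonal $\Delta$ or in $y + W_4$ for some $y \in C$. A degree-$5$ point gives a reduced divisor which, by the no-degree-$5$-map hypothesis, is the \emph{unique} effective divisor in its linear equivalence class; this rules out the $\Delta$ alternative. In the $y + W_4$ alternative, any two classes in $x_i + B_i$ are represented by effective divisors containing $y$; but the divisor of a degree-$5$ point is irreducible over $K$, so two such divisors with a common point must be equal. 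Hence $\psi(\Omega_5) \cap (x_i + B_i(K))$ has \emph{at most one element}---no finiteness of $B_i(K)$ is required or true. This ``common support forces equality'' step is the idea your sketch is missing, and the role of $g \geq 11$ is precisely to drive the Clifford/Riemann--Roch contradiction yielding the $y + W_4$ versus $\Delta$ dichotomy, not merely to ensure $\dim W_5 = 5$.
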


\begin{rem}
\label{Remark:GenezalizationFiniteness}
Although  we only consider rational points of degree $5$ in this paper,
it is certainly possible to generalize
Theorem \ref{Theorem:FinitenessLowDegreePoints2}
to rational points of degree $d$, at least when $d$ is a prime number.
For this purpose, we need to generalize
Proposition \ref{Proposition:AbelianSurfaceinJacobian} (see below)
to effective divisors of degree $d$.
Then, we expect the lower bound of the genus would be
a quadratic function of $d$.
It is related to a theorem of Abramovich--Harris
\cite[Theorem 2]{Abramovich-Harris}.
However, we warn the readers that
there are some issues on the status of this theorem;
see Remark \ref{Remark:Issue:AbramovichHarris}.
\end{rem}

\subsection{Proof of Theorem \ref{Theorem:FinitenessLowDegreePoints1}}

Theorem \ref{Theorem:FinitenessLowDegreePoints1} is well-known.
For example,
it follows from \cite[Corollary 2.4]{Derickx-Sutherland}.
Here we give a proof of it in some detail.
The lemmas given in this subsection
will also be used in the proof of
Theorem \ref{Theorem:FinitenessLowDegreePoints2} later.

Let $C$ be a projective smooth geometrically integral
curve of genus $g(C)$
over a number field $K$.
Let $x \in C(\overline{K})$ be a rational point of degree $d$.
The sum of $x$ and its conjugates defines
an effective $K$-rational divisor of degree $d$ on $C$.
We denote it by $D_x$.
Then we have a map
\[
\{ x \in C(\overline{K}) \mid [K(x) : K] = d \}
\ \to \ C^{(d)}(K), \quad
x \mapsto D_x,
\]
which is $d$-to-$1$ onto the image.
It is not necessarily surjective.
Let
\[ \Omega_d \subset C^{(d)}(K) \]
be the image of this map.
Here $C^{(d)} \coloneqq \mathrm{Sym}^d(C)$
is the $d$-th symmetric power.
We consider it as the variety of effective divisors of degree $d$.

\begin{lem}
\label{Lemma:Support}
Let $x \in C(\overline{K})$ be a rational point of degree $d$
on $C$.
Let $E$ be an effective $K$-rational divisor of degree $d$.
If $(\Supp \, D_x) \cap (\Supp \, E) \neq \emptyset$,
then $D_x = E$.
\end{lem}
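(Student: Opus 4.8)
The statement is a simple but crucial rigidity fact: two effective $K$-rational divisors of degree $d$ whose supports meet must coincide, provided one of them is of the form $D_x$ for a point $x$ of degree exactly $d$. The plan is to exploit that $D_x$ is the \emph{minimal} $K$-rational effective divisor through $x$, namely the full Galois orbit of $x$ with multiplicity one. First I would observe that since $[K(x):K]=d$, the $\Gal(\overline{K}/K)$-orbit of $x$ has exactly $d$ elements $x=x_1,\dots,x_d$, and $D_x = x_1 + \cdots + x_d$ as a divisor; in particular $\Supp D_x$ is precisely this orbit and every point of it appears in $D_x$ with multiplicity one.

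Now suppose $E$ is an effective $K$-rational divisor of degree $d$ with $(\Supp D_x)\cap(\Supp E)\neq\emptyset$. Then some $x_i$ lies in $\Supp E$, so $E$ contains $x_i$ with multiplicity $m\geq 1$. Because $E$ is $K$-rational, it is $\Gal(\overline{K}/K)$-invariant as a divisor, hence every Galois conjugate of $x_i$ — that is, every $x_j$ — also appears in $E$ with the same multiplicity $m$. Therefore $E \geq m(x_1 + \cdots + x_d) = m\,D_x$, and comparing degrees gives $d = \deg E \geq m\deg D_x = md$, forcing $m=1$ and $E = D_x$.

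The argument is entirely elementary; the only point that needs care is the claim that a $K$-rational divisor is Galois-invariant and that the multiplicity is constant along a Galois orbit, which is immediate from the definition of a divisor being defined over $K$ (it is fixed by the natural $\Gal(\overline{K}/K)$-action on $\Div(C_{\overline{K}})$) together with the fact that the Galois action permutes the points transitively within the orbit of $x$. I do not anticipate any real obstacle here; the lemma is purely formal, and its role is to ensure later that the map $x \mapsto D_x$ has fibers that are ``rigid'' enough to transfer finiteness statements about $\Omega_d \subset C^{(d)}(K)$ back to finiteness of the set of degree-$d$ points.
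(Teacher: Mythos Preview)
Your proof is correct and is essentially the same as the paper's, which simply notes that $D_x$ is irreducible as a $K$-rational divisor; your Galois-orbit argument is exactly the unpacking of that irreducibility and the immediate degree comparison.
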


\begin{proof}
The assertion follows from the fact that $D_x$
is irreducible as a $K$-rational divisor.
\end{proof}

\begin{lem}
\label{Lemma:Injectivity}
Assume that there does not exist a non-constant morphism
$C \to \PP^1$ of degree $d$ defined over $K$.
\begin{enumerate}
\item Let $x \in C(\overline{K})$ be a rational point of degree $d$.
Let $E$ be an effective $K$-rational divisor of degree $d$.
If we have an equality of divisor classes $[D_x] = [E]$, then $D_x = E$.
In other words, we have $\dim_K H^0(C, \mathscr{O}_C(D_x)) = 1$.

\item The natural map
$\Omega_d \to \Pic^d(C)$, $D_x \mapsto [D_x]$
is injective.
Here $\Pic^d(C)$ is the degree $d$ part of the Picard group of $C$.
\end{enumerate}
\end{lem}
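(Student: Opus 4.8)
The plan is to prove part (1), from which part (2) follows immediately: if $[D_x] = [D_y]$ for two degree $d$ points $x, y$, then by (1) applied with $E = D_y$ we get $D_x = D_y$, so the map $\Omega_d \to \Pic^d(C)$ is injective. So I focus on (1). Suppose $x$ is a degree $d$ point and $E \geq 0$ is a $K$-rational effective divisor of degree $d$ with $[D_x] = [E]$ in $\Pic^d(C)$. I want to conclude $D_x = E$, equivalently (since both are effective of the same degree and linearly equivalent) that the linear system $|D_x|$ is a single point, i.e.\ $\dim H^0(C, \mathscr{O}_C(D_x)) = 1$.

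First I would argue by contradiction: assume $\dim H^0(C, \mathscr{O}_C(D_x)) \geq 2$. The complete linear system $|D_x|$ is then a positive-dimensional linear system of effective divisors of degree $d$, all defined over $K$ (since $D_x$ is $K$-rational and $H^0$ is a $K$-vector space, $\PP(H^0(C,\mathscr{O}_C(D_x)))$ is a projective space over $K$ with the $K$-rational divisors forming a dense subset; more directly, a pencil inside it is defined over $K$). Extracting a $K$-rational pencil $\PP^1 \subseteq |D_x|$ of dimension $1$, the associated rational map $C \dashrightarrow \PP^1$ is defined over $K$ and, after removing the base locus, gives a non-constant morphism $\varphi \colon C \to \PP^1$ over $K$ of degree $\leq d$. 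Here I would use that $D_x$ is $K$-\emph{irreducible} (Lemma \ref{Lemma:Support}): since $D_x$ is the sum of a single Galois orbit, it has no proper $K$-rational effective subdivisor, so the base locus of the pencil, being $K$-rational and effective and $\leq D_x$, is either $0$ or all of $D_x$; the latter is impossible as the pencil is $1$-dimensional, so the base locus is $0$ and $\deg \varphi = d$ exactly. This contradicts the hypothesis that no non-constant morphism $C \to \PP^1$ of degree $d$ is defined over $K$. Hence $\dim H^0(C, \mathscr{O}_C(D_x)) = 1$, so $|D_x| = \{D_x\}$, and since $E \in |D_x|$ we get $E = D_x$.

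I expect the only delicate point to be the bookkeeping around fields of definition: one must check that a positive-dimensional piece of $H^0(C, \mathscr{O}_C(D_x))$ yields a pencil \emph{defined over $K$}, and that the resulting morphism to $\PP^1$ is over $K$ of degree exactly $d$. Both follow from the Galois-descent fact that $H^0(C, \mathscr{O}_{C}(D_x)) \otimes_K \overline{K} = H^0(C_{\overline{K}}, \mathscr{O}(D_x))$ together with the $K$-irreducibility of $D_x$ supplied by Lemma \ref{Lemma:Support}; there is nothing deep here, only care. Everything else is standard linear-systems geometry.
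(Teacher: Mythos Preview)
Your proposal is correct and follows essentially the same approach as the paper. Both arguments reduce to the observation that a $K$-rational effective divisor strictly between $0$ and $D_x$ cannot exist (Lemma \ref{Lemma:Support}), and hence any non-constant $K$-rational function in $H^0(C,\mathscr{O}_C(D_x))$ yields a degree-$d$ map to $\PP^1$. The only cosmetic difference is that the paper works directly with the given $E$: assuming $D_x \neq E$, the rational function $f$ with $\mathrm{div}(f)=D_x-E$ has degree $d - \deg\gcd(D_x,E)$, which must be $<d$ by hypothesis, forcing $\Supp D_x \cap \Supp E \neq \emptyset$ and hence $D_x=E$ by Lemma \ref{Lemma:Support}; you instead pass through the contrapositive statement $h^0 \geq 2$ and extract a pencil whose base locus you analyze the same way.
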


\begin{proof}
Since (1) implies (2), it is enough to prove (1).
Let $x_i (1 \leq i \leq d)$ be the conjugates of $x$.
By assumption, the divisor $D_x \coloneqq \sum_{i=1}^{d} x_i$
is linearly equivalent to $E$.
This means there exists a non-constant morphism
$f \colon C \to \PP^1$ defined over $K$
such that $\mathrm{div}(f) = D_x - E$.
By assumption, we have $\deg f \neq d$.
Hence $(\Supp \, D_x) \cap (\Supp \, E) \neq \emptyset$.
By Lemma \ref{Lemma:Support}, we have $D_x = E$.
\end{proof}

\begin{rem}
\label{Remark:Uniqueness}
Let $x \in C(\overline{K})$ be a rational point of degree $d$,
and $D_x$ the divisor of degree $d$ associated with $x$.
Let $L/K$ be an extension of fields, e.g., $L = \overline{K}$ or $\C$.
Let $C_L$ be the base change of $C$ to $L$.
Then we have
\[
\dim_L H^0(C_L, \mathscr{O}_{C_L}(D_x))
= \dim_K H^0(C, \mathscr{O}_C(D_x)) = 1
\]
by Lemma \ref{Lemma:Injectivity} (1).
Therefore,
every effective $L$-rational divisor on $C_L$ linearly equivalent to $D_x$
is equal to $D_x$.
\end{rem}

We shall give a proof of Theorem \ref{Theorem:FinitenessLowDegreePoints1}.
If $C$ has no rational point of degree $d$,
then there is nothing to prove.
Thus we assume $C$ has a rational point of degree $d$.
We fix one such point and denote it by $x_0 \in C(\overline{K})$.
Let $D_{x_0}$ be the $K$-rational divisor of degree $d$
corresponding to $x_0$.
It gives an isomorphism of $K$-schemes
\[
\Pic^d_{C/K} \cong \Pic^0_{C/K} = \Jac(C),
\quad
[D] \mapsto [D - D_{x_0}].
\]
Here $\Pic^d_{C/K}$ is the degree $d$ part of the Picard scheme.
(For basic properties of the Picard scheme,
see \cite[Chapter 8]{Bosch-Luetkebohmert-Raynaud}.)

Since we are assuming $\Jac(C)(K)$ is finite,
$\Pic^d_{C/K}(K)$ is finite.
The natural map $\Pic^d(C) \to \Pic^d_{C/K}(K)$
sending a divisor class of degree $d$ on $C$
to its associated rational point on the Picard scheme
is always injective;
see \cite[Chapter 8, Section 8.1, Proposition 4]{Bosch-Luetkebohmert-Raynaud} for details.
Thus, $\Pic^d(C)$ is finite.
By Lemma \ref{Lemma:Injectivity} (2),
we conclude that $\Omega_d$ is finite.

The proof of Theorem \ref{Theorem:FinitenessLowDegreePoints1} is complete.
\qed

\subsection{Subvarieties of abelian varieties}
\label{Subsection:SubvarietiesAbelianVarieties}

Before giving the proof of 
Theorem \ref{Theorem:FinitenessLowDegreePoints2},
we recall some results on subvarieties the Brill--Noether loci of
Jacobian varieties
due to Abramovich--Harris \cite{Abramovich-Harris}.

\begin{rem}
\label{Remark:Issue:AbramovichHarris}
There are some issues on some of the statements and
the proofs given in \cite{Abramovich-Harris};
see \cite[p.\ 236]{Debarre-Fahlaoui},
\cite[Section 9]{vanderGeer:points},
\cite[pp.\ 3026--3027]{Tucker}.
To the authors' best knowledge,
the current situation is as follows.
\begin{itemize}
\item \cite[Lemma 6]{Abramovich-Harris} is incorrect.
Hence we should avoid using \cite[Lemma 6]{Abramovich-Harris} and its corollaries.

\item
As pointed out in \cite[p.\ 116]{vanderGeer:points},
there is a misprint in the statement of \cite[Lemma 7]{Abramovich-Harris}.
The correct statement is:
$r_{k+1} - r_{k} \geq r_{k} - r_{k-1}$.

\item The second part of \cite[Lemma 8]{Abramovich-Harris}
and \cite[Corollary 1]{Abramovich-Harris}
should be considered unproved
because their proof relies on \cite[Lemma 6]{Abramovich-Harris}.

\item It is fair to say that
\cite[Theorem 2]{Abramovich-Harris} is not proved in \cite{Abramovich-Harris}
because its proof in \cite{Abramovich-Harris} relies on
the second part of \cite[Lemma 8]{Abramovich-Harris},
which relies on \cite[Lemma 6]{Abramovich-Harris}.

\item
The current status of \cite[Theorem 2]{Abramovich-Harris}
is rather unclear.
According to 
\cite[p.\ 116]{vanderGeer:points}, \cite[p.\ 236]{Debarre-Fahlaoui},
Abramovich gave a different proof of
\cite[Theorem 2]{Abramovich-Harris} with an extra hypothesis,
but his result has not yet been published.
\end{itemize}
Although we quote \cite{Abramovich-Harris},
we shall not use any of the following results:
\cite[Lemma 6]{Abramovich-Harris},
the second part of \cite[Lemma 8]{Abramovich-Harris},
\cite[Corollary 1]{Abramovich-Harris},
and \cite[Theorem 2]{Abramovich-Harris}.
We shall only use the following results:
\cite[Lemma 1]{Abramovich-Harris},
\cite[Lemma 3]{Abramovich-Harris},
\cite[Lemma 5]{Abramovich-Harris},
and the first part of \cite[Lemma 8]{Abramovich-Harris};
it can be checked easily that none of them relies on
\cite[Lemma 6]{Abramovich-Harris}.
\end{rem}

For simplicity, we work over $\C$ in this section.
We do not distinguish algebraic varieties over $\C$
and the sets of $\C$-rational points.
Let $C$ be a projective smooth integral curve of genus $g(C)$
over $\C$.
The image of the natural map
$C^{(d)} \to \Pic^d(C)$ is denoted by $W_d(C)$.
The set of $\C$-rational points on $W_d(C)$
is identified with the set of linear equivalence classes
of effective divisors of degree $d$ on $C$.
For any $r \geq 0$,
we put
\[
W^r_d(C) \coloneqq \{ [D] \in \Pic^d(C) \mid
h^0(\mathscr{O}_C(D)) \geq r+1 \}.
\]
Then we have $W_d(C) = W^0_d(C)$,
and $W^r_d(C)$ is a closed subvariety of $W_d(C)$.
In the following, we consider the case $d = 5$.

\begin{prop}
\label{Proposition:AbelianSurfaceinJacobian}
Let $A \subset \Jac(C)$ be an abelian subvariety of dimension $\geq 2$.
Let $D$ be a $\C$-rational divisor of degree $5$ satisfying
\[ [D] + A \ \subset \ W_5(C). \]
Assume $g(C) \geq 11$.
Then one of the following holds:
\begin{itemize}
\item There exists a $\C$-rational point $x \in C$ such that $[D] + A$ is contained in $x + W_4(C)$.
\item $[D] + A$ is contained in $\Delta$.
Here $\Delta$ is the image of the \textit{big diagonal} of $C^{(5)}$.
(This means every element of $[D] + A$ is represented by a non-reduced divisor,
i.e., a divisor which has multiplicity $\ge 2$ at some point.)
\end{itemize}
\end{prop}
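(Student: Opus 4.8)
The plan is to prove Proposition~\ref{Proposition:AbelianSurfaceinJacobian} by a two-step case analysis of $B := [D] + A \subseteq W_5(C)$: first I would decide whether a general class of $B$ moves in a pencil, and then, in the remaining case, whether the effective divisors parametrised by $B$ have a point in common, feeding each configuration into the (corrected) lemmas of Abramovich--Harris and the Castelnuovo--Severi inequality.

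\textbf{Step 1 (reduction to $h^0 = 1$).} Suppose $B \subseteq W^1_5(C)$, so that $\dim W^1_5(C) \ge \dim A \ge 2$. Since $2 \le 5 \le g(C) - 1$, the Martens-type bound of \cite[Lemma~5]{Abramovich-Harris} gives $\dim W^1_5(C) \le 2$ unless $C$ is hyperelliptic, and the boundary case forces $C$ to be trigonal or bielliptic (the smooth plane quintic having $g = 6 < 11$). In each of these cases every component of $W^1_5(C)$ of dimension $\ge 2$ is, up to translation, contained in a sum $W_k(C) + A_0$ with $k \le 3$ and $A_0$ an abelian subvariety of $\Jac(C)$ of dimension $\le 1$ (arising from the $g^1_2$, the $g^1_3$, or $\phi^*\Jac(E)$ for a bielliptic map $\phi \colon C \to E$). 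Projecting off $A_0$, a translate of a $2$-dimensional abelian variety would then have to lie inside $W_k(C)$ with $k \le 3$; the gonality bound \cite[Lemma~1]{Abramovich-Harris} shows this can only happen when $C$ carries a finite map to a curve of genus $\ge 2$ of small degree, and the Castelnuovo--Severi inequality (Section~\ref{Section:CastelnuovoSeveriInequality}) then contradicts $g(C) \ge 11$. Hence $B \not\subseteq W^1_5(C)$: a general class of $B$ has a unique effective representative.

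\textbf{Step 2 (the incidence correspondence).} Let $\pi \colon C^{(5)} \to \Pic^5(C)$ be the Abel--Jacobi map and $Y := \pi^{-1}(B)$; by Step~1 the general fibre of $\pi$ over $B$ is a point, so $Y$ is irreducible of dimension $\dim A$ and $\pi|_Y$ is birational onto $B$. Let $\widetilde Y \subseteq C^{(4)} \times C$ be the preimage of $Y$ under $(E', p) \mapsto E' + p$, a finite cover of $Y$ of degree $\le 5$, and let $\widetilde Y_0$ be a component dominating $Y$. If $\widetilde Y_0 \to C$ is constant with image $\{x\}$, then every class of $B$ is represented by an effective divisor through $x$, i.e. $B \subseteq x + W_4(C)$: the first alternative. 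Otherwise $\widetilde Y_0 \to C$ is dominant, and for general $p \in C$ I would examine $D_p := \{\, \xi \in B : \xi - [p] \text{ is effective} \,\} = B \cap (p + W_4(C))$, which (using $\dim \widetilde Y_0 = \dim A$) has codimension $0$ or $1$ in $B$. If $D_p = B$ for some $p$ we land again in the first alternative; otherwise $\{D_p\}_{p \in C}$ is a genuinely moving one-parameter family of divisors on the abelian-variety translate $B$ which sweeps out all of $B$, while by Step~1 a general $\xi \in B$ lies on only finitely many $D_p$. Running this configuration through \cite[Lemma~3]{Abramovich-Harris} and the first part of \cite[Lemma~8]{Abramovich-Harris} --- which describe how such a sweeping family of divisors on a translate of an abelian subvariety of $\Jac(C)$ can be cut out by the points of $C$ --- one concludes that either every effective divisor representing a class of $B$ is non-reduced, so $B \subseteq \Delta$ (the second alternative), or $C$ admits a finite morphism to a curve of genus $\ge 1$ together with a morphism to $\PP^1$, both of small degree determined by $A$, in which case the Castelnuovo--Severi inequality again contradicts $g(C) \ge 11$. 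This exhausts the possibilities.

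\textbf{Main obstacle.} The delicate part is everything in Step~2 after the ``common point'' case, together with the exclusion of $2$-dimensional abelian translates from the small Brill--Noether loci in Step~1: all of this must be carried out using only \cite[Lemma~1]{Abramovich-Harris}, \cite[Lemma~3]{Abramovich-Harris}, \cite[Lemma~5]{Abramovich-Harris} and the first part of \cite[Lemma~8]{Abramovich-Harris}, since \cite[Lemma~6]{Abramovich-Harris}, the second part of \cite[Lemma~8]{Abramovich-Harris}, \cite[Corollary~1]{Abramovich-Harris} and \cite[Theorem~2]{Abramovich-Harris} are unavailable (Remark~\ref{Remark:Issue:AbramovichHarris}). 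Pushing the genus hypothesis down to $g(C) \ge 11$ --- which is what the application to $X_0(105)$ (genus $13$) requires --- is precisely what makes the Castelnuovo--Severi bookkeeping in these steps tight; the remainder of the argument is dimension counting on symmetric products and the elementary properties of $\Pic$ recorded in the preceding subsection.
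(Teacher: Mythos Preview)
Your outline diverges substantially from the paper's argument, and Step~2 in particular is not a proof. You invoke \cite[Lemma~3]{Abramovich-Harris} and the first part of \cite[Lemma~8]{Abramovich-Harris} on your family $\{D_p\}_{p\in C}$, but those lemmas are not statements about moving families of divisors on an abelian translate $B$: Lemma~3 describes what happens when the linear-system map $\phi_\alpha \colon C \to \PP^{r(\alpha)}$ for general $\alpha \in A_2$ fails to be birational onto its image, and the first part of Lemma~8 bounds the numbers $r_k := \min_{\alpha\in A_k} r(\alpha)$ from below via the convexity of the corrected Lemma~7. You have not explained how your incidence data feeds into either statement, so the dichotomy you assert at the end of Step~2 (either $B\subset\Delta$ or $C$ carries two low-degree maps amenable to Castelnuovo--Severi) is unsupported. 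Step~1 is also sketchy: for instance in the hyperelliptic case $W^1_5(C)$ is a translate of $W_3(C)$, and excluding a $2$-dimensional abelian translate from $W_3(C)$ is itself an instance of the very problem you are trying to solve, not something ``the gonality bound'' of Lemma~1 disposes of.

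The paper's argument is quite different and does not use Castelnuovo--Severi at all here. Assuming both conclusions fail, one works directly with the Abramovich--Harris numbers $r_k$. One first shows $\phi_\alpha$ cannot be birational for general $\alpha\in A_2$: if it were, Lemma~1 gives $r_2\ge\dim A\ge 2$, Lemma~5 upgrades this to $r_2\ge 3$, and the convexity $r_{k+1}-r_k\ge r_k-r_{k-1}$ (the corrected Lemma~7, which is what the first part of Lemma~8 invokes) yields $r_3\ge 2r_2\ge 6$ and then $r_5\ge 2r_3+r_2\ge 15$. A class in $A_5$ has degree $25$, so $r_5>25/2$ forces non-speciality by Clifford, and Riemann--Roch gives $g(C)\le 25-15=10$. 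This arithmetic is where the threshold $g(C)\ge 11$ actually comes from --- not from Castelnuovo--Severi bookkeeping as you suggest. Once $\phi_\alpha$ is known not to be birational, Lemma~3 together with the primality of $5$ leaves only $A\hookrightarrow W^1_1(C)$ or $[D]+A\subset\rho^\ast W_1(C')$ for a degree-$5$ cover $\rho$, both of dimension $\le 1$, contradicting $\dim A\ge 2$.
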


\begin{proof}
We prove this proposition along the same lines as
\cite[Corollary 3.6]{Debarre-Fahlaoui},
\cite[Proposition 3.8]{Debarre-Fahlaoui}.
We assume that both of the assumptions in
the beginning of \cite[Section 2]{Abramovich-Harris} are satisfied.
Namely, we assume that
the following conditions are satisfied:
\begin{itemize}
\item 
$[D] + A$ is not contained in $x + W_4(C)$
for any $\C$-rational point $x \in C$,

\item 
$[D] + A$ is not contained in the image $\Delta$ of the big diagonal of $C^{(5)}$.
\end{itemize}
We shall derive a contradiction from these assumptions.

We shall briefly recall the notation used in \cite[Section 2]{Abramovich-Harris}. 
For $k \geq 1$,
let $A_k \subset W_{5k}(C)$ be the image of the summation map
\[
  \underbrace{([D] + A) \times \cdots \times ([D] + A)}_{k}
  \ \hookrightarrow \ \underbrace{W_5(C) \times \cdots \times W_5(C)}_{k}
  \ \overset{\mathrm{sum}}{\to} \ W_{5k}(C).
\]
For a divisor class $\alpha \in A_k$,
let $D_{\alpha}$ be an effective divisor of degree $5k$ associated with $\alpha$.
Let
\[ r(\alpha) \coloneqq \dim |D_{\alpha}| \]
be the dimension of the complete linear system associated with $D_{\alpha}$.
We put
\[ r_k \coloneqq \min \{ r(\alpha) \mid \alpha \in A_k \}. \]
We have $r_1 = 0$ by the first assumption.
For a general $\alpha \in A_k$, we have $r(\alpha) = r_k$.

In the following, we shall consider the case $k = 2$.
By the first assumption, for a general $\alpha \in A_2$,
the complete linear system $|D_{\alpha}|$ is base point free.
Let
$\phi_{\alpha} \colon C \to \PP^{r(\alpha)}$
be the morphism associated with $|D_{\alpha}|$.

We shall show that $\phi_{\alpha}$ is not birational onto
its image for a general $\alpha \in A_2$.
To see this, we assume that 
$\phi_{\alpha}$ is birational onto its image for a general $\alpha \in A_2$.
By \cite[Lemma 1]{Abramovich-Harris}, we always have
$r_2 \geq \dim A \geq 2$,
and under our birationality assumption,
by \cite[Lemma 5]{Abramovich-Harris},
we further have $r_2 \geq 3$.
By the first claim in the first part of \cite[Lemma 8]{Abramovich-Harris},
we have $r_3 \geq 2 r_2 \geq 6$.
This follows from the corrected statement
``$r_{k+1} - r_{k} \geq r_{k} - r_{k-1}$''
of \cite[Lemma 7]{Abramovich-Harris} for $k = 1$.
By the second claim in the first part of \cite[Lemma 8]{Abramovich-Harris} for $k = 3$,
we have
\[
  r_5 - r_3 \ \geq \ \min \{ r_3 - r_1 + r_2, \, 10 \}.
\]
Hence we have $r_5 \geq 2 r_3 + r_2 \geq 15$.
For any $\beta \in A_5$, we have $\deg \beta = 25$.
Since
\[ r(\beta) \ \geq \ r_5 \ > \ (\deg \beta)/2 \ = \ 25/2, \]
the divisor $\beta$ is non-special by Clifford's theorem \cite[Chapter III, Section 1]{ACGH}.
By Riemann--Roch, we have
\[ g(C) \ = \ \deg \beta - r(\beta) \ \leq \ 25 - 15 \ = \ 10, \]
which contradicts the assumption $g(C) \geq 11$.
The contradiction shows that
$\phi_{\alpha}$ is not birational onto its image for
a general $\alpha \in A_2$.

Now we shall apply \cite[Lemma 3]{Abramovich-Harris}.
We note that there are corrections and typos \cite[Lemma 3]{Abramovich-Harris};
see Remark \ref{Remark:AbramovichHarrisLemma3} below.
As a conclusion,
there exists a non-constant morphism $\rho \colon C \to C'$
of degree $\deg \rho > 1$ such that
\[ [D] + A \ \subset \ \rho^{\ast} W_{d'}(C')  \]
for $d' = d/\deg \rho$.
Since $d = 5$ is prime, we necessarily have $d' = 1$ and $\deg \rho = 5$.
Since
\[ \dim (\rho^{\ast} W_1(C')) = \dim W_1(C') \leq 1, \]
the variety $\rho^{\ast} W_1(C')$ cannot contain
a translate of an abelian variety of dimension $\geq 2$.

The contradiction shows that at least one of the assumptions in
\cite[Section 2]{Abramovich-Harris} cannot be satisfied.
The proof of Proposition \ref{Proposition:AbelianSurfaceinJacobian}
is complete.
\end{proof}

\begin{rem}
\label{Remark:AbramovichHarrisLemma3}
There are typos
in \cite[Lemma 3]{Abramovich-Harris}.
The equation in the last line of the statement
should read
$d' = d/\deg \rho = (\deg \overline{\phi}_{\alpha})/2$,
not
$d' = d/\deg \rho = \deg \overline{\phi}_{\alpha}$,
because $\deg \alpha = 2d$.
The equation in the fourth line of the proof of
\cite[Lemma 3]{Abramovich-Harris}
should read $d' = 2d/m$, not $d' = d/m$,
because $\deg \alpha = 2d$.

We also remark that one of the conclusions of
\cite[Lemma 3]{Abramovich-Harris} never occurs.
More precisely,
the first case ``$A \subset W^1_{d'}(C)$''
in \cite[Lemma 3]{Abramovich-Harris} never occurs.
The statement of \cite[Lemma 3]{Abramovich-Harris}
should be as follows:
\begin{quote}
Assume $\dim A \geq 1$ and $r(\alpha) > 1$ for all $\alpha \in A_2$.
If $\phi_{\alpha} \colon C \to \PP^{r(\alpha)}$
is not birational for general $\alpha$,
then $\phi_{\alpha}$ factors as
\[
C \overset{\rho}{\longrightarrow} C'
\overset{\overline{\phi}_{\alpha}}{\longrightarrow} \PP^{r(\alpha)}
\]
such that $C'$ is a projective smooth curve of genus $\geq 1$
and there is an embedding
$A \hookrightarrow \rho^{\ast} W_{d'}(C')$
where $d' = d/\deg \rho$.
\end{quote}
In the proof of \cite[Lemma 3]{Abramovich-Harris},
the case ``$A \subset W^1_{d'}(C)$''
corresponds to the case where
the image of $\phi_{\alpha}$ is a rational curve.
In the following, we shall show that this case never occurs.

Assume that the image of $\phi_{\alpha}$ is a rational curve.
The image is a rational normal curve of degree $m = r(\alpha)$.
We obtain an inclusion $i \colon A_2 \hookrightarrow W^1_{2d/m}(C)$ 
by sending $\alpha \in A_2$ to the divisor class 
corresponding to the preimage of a point of $\mathrm{Im}(\phi_\alpha)$ under $\phi_\alpha$.
By the definition of this inclusion map, the image of the summation map
\[
A_2^m \coloneqq \underbrace{A_2 \times \cdots \times A_2}_{m}
\ \overset{i^m}{\hookrightarrow} \ W_{2d/m}(C)^m \coloneqq \underbrace{W_{2d/m}(C) \times \cdots \times W_{2d/m}(C)}_{m}
\ \overset{\mathrm{sum}}{\to} \ W_{2d}(C)
\]
is $A_2$.
Since $i(A_2) \subset W^1_{2d/m}(C)$ and $A_2 \cong A$,
the preimage of $i(A_2)$ under $C^{(2d/m)} \to W_{2d/m}(C)$
has dimension at least $\dim A_2 + 1 = \dim A + 1$.
Therefore, the preimage of $i(A_2)^m$ under
$(C^{(2d/m)})^m \to W_{2d/m}(C)^m$
has dimension at least $m \cdot \dim A + m$.
We conclude that
the complete linear system corresponding to a general point of $A_2$
has dimension at least $(m-1) \cdot \dim A + m$.
Hence we have
\[ m = r(\alpha) \ \geq \ (m-1) \cdot \dim A + m. \]
Consequently, we have $m = r(\alpha) = 1$.
It contradicts the assumption
$r(\alpha) \geq 2$ of \cite[Lemma 3]{Abramovich-Harris}.
The contradiction shows that
the image of $\phi_{\alpha}$ is never a rational curve.

The authors are grateful to the referees for
suggestions for the proof of
Proposition \ref{Proposition:AbelianSurfaceinJacobian}.
The referees pointed out that our original proof of
Proposition \ref{Proposition:AbelianSurfaceinJacobian}
shows that the first case ``$A \subset W^1_{d'}(C)$''
in \cite[Lemma 3]{Abramovich-Harris} never occurs.
\end{rem}

\begin{cor}
\label{Corollary:IntersectionSupport}
Let $C, A, D$ satisfy
the assumption of Proposition \ref{Proposition:AbelianSurfaceinJacobian}.
Assume $[D] + A \not\subset \Delta$.
Let $D_1, D_2$ be effective $\C$-rational divisors of degree $5$
satisfying $[D_1], [D_2] \in [D] + A$.
Then there exist effective $\C$-rational divisors $D'_1, D'_2$
satisfying the following
\[
D_1 \sim D'_1, \quad
D_2 \sim D'_2, \quad
(\Supp \, D'_1) \cap (\Supp \, D'_2) \neq \emptyset.
\]
Here $\sim$ denotes the linear equivalence.
\end{cor}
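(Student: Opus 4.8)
The plan is to leverage the first alternative of Proposition \ref{Proposition:AbelianSurfaceinJacobian}, which under the hypothesis $[D]+A \not\subset \Delta$ forces $[D]+A \subset x + W_4(C)$ for some point $x \in C$. First I would spell out what this inclusion means divisor-theoretically. For any effective divisor $E$ of degree $5$ with $[E] \in [D]+A$, the class $[E] - [x]$ lies in $W_4(C)$, so there is an effective divisor $F_E$ of degree $4$ with $E \sim x + F_E$. In other words, every linear equivalence class in $[D]+A$ is represented by an effective divisor \emph{passing through the fixed point $x$}.

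Now apply this to $D_1$ and $D_2$. We obtain effective divisors of degree $4$, call them $F_1$ and $F_2$, with $D_1 \sim x + F_1$ and $D_2 \sim x + F_2$. Setting $D_1' \coloneqq x + F_1$ and $D_2' \coloneqq x + F_2$, we have $D_1 \sim D_1'$, $D_2 \sim D_2'$, and moreover $x \in (\Supp\, D_1') \cap (\Supp\, D_2')$, so the intersection of the supports is nonempty. This is exactly the assertion. The only subtlety is to make sure the translate-of-$W_4$ alternative genuinely yields the "common point" $x$; since $x + W_4(C)$ is by definition the image in $\Pic^5(C)$ of the subset of $C^{(5)}$ consisting of effective divisors of the form $x + (\text{effective degree }4)$, membership of a class in $x + W_4(C)$ is precisely the existence of such a representative, so no further argument is needed.

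I do not expect any serious obstacle here: the real work was done in Proposition \ref{Proposition:AbelianSurfaceinJacobian}, and this corollary is essentially an unwinding of the definition of $W_4(C)$ together with its translate by $x$. The only point that requires a line of care is confirming that under the hypothesis $[D]+A\not\subset\Delta$ we are in the first bullet of the proposition (since the second is excluded by assumption), so that the fixed point $x$ is available uniformly for all classes in $[D]+A$ — in particular the same $x$ works for both $D_1$ and $D_2$, which is what makes the supports meet.
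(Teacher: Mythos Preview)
Your proposal is correct and matches the paper's own proof essentially line for line: both invoke Proposition \ref{Proposition:AbelianSurfaceinJacobian} to get $[D]+A \subset x + W_4(C)$ (the $\Delta$ alternative being excluded by hypothesis), write $D_i \sim x + E_i$ with $E_i$ effective of degree $4$, and observe that $x$ lies in the support of both $D'_i \coloneqq x + E_i$. Your added remarks about the same $x$ working uniformly for all classes in $[D]+A$ are accurate and make the logic slightly more explicit than the paper's terse version.
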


\begin{proof}
By Proposition \ref{Proposition:AbelianSurfaceinJacobian},
we see that $[D] + A$
is contained in $x + W_4(C)$ for some $x \in C$.
Thus there exists a linear equivalence
$D_1 \sim x + E_1$ (resp.\ $D_2 \sim x + E_2$)
for an effective divisor $E_1$ (resp.\ $E_2$) of degree $4$.
We put $D'_1 \coloneqq x + E_1$ (resp.\ $D'_2 \coloneqq x + E_2$).
Then we have
$x \in (\Supp \, D'_1) \cap (\Supp \, D'_2)$.
\end{proof}

\subsection{Proof of Theorem \ref{Theorem:FinitenessLowDegreePoints2}}

We may assume $C$ has a rational point $x_0 \in C(\overline{K})$ of degree $5$.
Let $D_{x_0}$ be the $K$-rational divisor of degree $5$
corresponding to $x_0$.
It gives an isomorphism of $K$-schemes
$\Pic^5_{C/K} \cong \Jac(C)$.
In the following, we identify $\Pic^5_{C/K}$ and $\Jac(C)$
via this isomorphism.

We have a morphism of $K$-schemes
\[
\psi \colon C^{(5)} \to \Pic^5_{C/K} \cong \Jac(C).
\]
Let
$\Omega_5 \subset C^{(5)}(K)$
be the image of the set of rational points of degree $5$ on $C$.
Since the restriction
\[
\psi|_{\Omega_5} \colon \Omega_5 \to \Jac(C)(K)
\]
is injective
by Lemma \ref{Lemma:Injectivity} (2),
it is enough to show that the image $\psi(\Omega_5)$ is finite.

We now apply a theorem of Faltings \cite[Theorem 4.2]{Faltings:Lang}
to the subvariety
\[ W_5(C) \cong \psi(C^{(5)}) \subset \Jac(C). \]
There exist $K$-rational points $x_1,\ldots,x_r \in \Jac(C)(K)$
and $K$-rational abelian subvarieties
$B_1,\ldots,B_r \subset \Jac(C)$
of dimension $\geq 0$ satisfying the following conditions:
\begin{itemize}
\item $x_i + B_i \subset \psi(C^{(5)})$ for every $i$.
\item Every $K$-rational point on $\psi(C^{(5)})$
is contained in $x_i + B_i$ for some $i$.
Namely, we have
\[
\psi(C^{(5)})(K) \ = \ \bigcup_{i=1}^{r} (x_i + B_i(K)).
\]
\end{itemize}
Since $\psi(\Omega_5) \subset \psi(C^{(5)})(K)$,
it is enough to prove the finiteness of the intersection
\[
\psi(\Omega_5) \cap (x_i + B_i(K))
\]
for every $i$ with $1 \leq i \leq r$.

Recall that  we have a decomposition
$\Jac(C) \sim A_1 \times \cdots \times A_r$
into $K$-simple abelian varieties, up to isogeny.

\begin{itemize}
\item 
There is nothing to prove when $\dim B_i = 0$, i.e., $B_i$ is a point.

\item 
Assume $\dim B_i = 1$.
The elliptic curve $B_i$ is $K$-isogenous to one of $A_1,\ldots,A_r$.
Since we are assuming
$A_i(K)$ is finite if $\dim A_i = 1$,
we see that $B_i(K)$ is finite.
Hence $\psi(\Omega_5) \cap (x_i + B_i(K))$ is finite.

\item
Assume $\dim B_i \geq 2$.
In this case, we shall show the intersection
$\psi(\Omega_5) \cap (x_i + B_i(K))$
contains at most one element.
Let
$x,y \in C(\overline{K})$ be rational points of degree $5$
such that both $\psi(D_x)$ and $\psi(D_y)$
are contained in $x_i + B_i(K)$.
By Remark \ref{Remark:Uniqueness},
$D_x$ (resp.\ $D_y$) is the unique effective $\C$-rational divisor 
in the linear equivalence class $[D_x]$ (resp.\ $[D_y]$).
Since $D_x$ and $D_y$ are geometrically reduced,
we see that $[D_x], [D_y]$ are not contained 
in the image of the big diagonal $\Delta$.
Hence $x_i + B_i$ is not contained in the image of
the big diagonal of $C^{(5)}$.
By Corollary \ref{Corollary:IntersectionSupport},
there exist effective $\C$-rational divisors $D'_x, D'_y$ 
such that
\[
D_x \sim D'_x, \quad
D_y \sim D'_y, \quad \text{and} \quad
(\Supp \, D'_x) \cap (\Supp \, D'_y) \neq \emptyset.
\]
Again, by Remark \ref{Remark:Uniqueness},
we have $D_x = D'_x$ and $D_y = D'_y$.
Hence we have $(\Supp \, D_x) \cap (\Supp \, D_y) \neq \emptyset$.
By Lemma \ref{Lemma:Support},
we have $D_x = D_y$,
and the $\overline{K}$-rational points $x,y$
are conjugate to each other.
Hence we have
\[ |\psi(\Omega_5) \cap (x_i + B_i(K))| \leq 1. \]
\end{itemize}

The proof of Theorem \ref{Theorem:FinitenessLowDegreePoints2}
is complete.
\qed

\begin{rem}
\label{Remark:ExampleCurve}
We only prove the finiteness of $\psi(\Omega_5)$.
We do not prove the finiteness of $W_5(C)(K)$.
In fact, under the assumption of
Theorem \ref{Theorem:FinitenessLowDegreePoints2},
the variety $W_5(C)$ can have infinitely many $K$-rational points.
One can construct such curves by the following way.
Let $\pi \colon C \to B$ be a double covering of
projective smooth curves over $\overline{\Q}$ satisfying the following conditions:
\begin{itemize}
\item $g(C) \geq 11$.
\item $g(B) = 2$.
\item There does not exist a non-constant morphism
$C \to \PP^1$ of degree $5$ over $\overline{\Q}$.
\item $\Jac(C)$ does not contain an elliptic curve defined over $\overline{\Q}$.
\end{itemize}
For the construction of such curves,
see Example \ref{Example:DoubleCover}.
Then $\pi \colon C \to B$ is defined over a number field $K$
of sufficiently large degree.
Replacing $K$ by a finite extension of it,
we may assume
$C$ has a $K$-rational point, and $\Jac(B)(K)$ has positive rank.
Let $x \in C(K)$ be a $K$-rational point.
Since the map $\Pic^2_{B/K}(K) \to \Jac(B)(K)$
defined by $[D] \mapsto [D] - 2 [\pi(x)]$ is bijective,
$\Pic^2_{B/K}(K)$ has infinitely many elements.
Since $B$ has genus $2$, every divisor class in
$\Pic^2_{B/K}(K)$ is linearly equivalent to an effective divisor of degree $2$.
Therefore, every divisor class in
$[x] + \pi^{\ast} \Pic^2_{B/K}(K)$
is linearly equivalent to an effective divisor of degree $5$.
Since $\mathrm{Ker}(\pi^{\ast} \colon \Jac(B) \to \Jac(C))$ is finite,
we see that $C$ has infinitely many
$K$-rational effective divisors of degree $5$
(i.e.,\ $W_5(C)(K)$ has infinitely many elements).
However, by Theorem \ref{Theorem:FinitenessLowDegreePoints2},
only finitely many of them correspond to rational points of degree $5$.
\end{rem}

\section{The Castelnuovo--Severi inequality and its application}
\label{Section:CastelnuovoSeveriInequality}

In this section, we recall the Castelnuovo--Severi inequality,
which is a powerful tool to study the geometry of algebraic curves.

For simplicity, we work over $\C$ in this section.

\begin{thm}[The Castelnuovo--Severi inequality]
\label{Theorem:Castelnuovo-Severi}
Let $C_1, C_2, C_3$ be projective smooth integral curves.
Let $\C(C_1), \C(C_2), \C(C_3)$ be the function fields of
$C_1, C_2, C_3$, respectively.
Let
$f \colon C_1 \to C_2$ and $g \colon C_1 \to C_3$
be non-constant morphisms.
Then we consider the extension of function fields
$K(C_1)/f^{\ast} K(C_2)$ and $K(C_1)/g^{\ast} K(C_3)$.
Assume that $K(C_1)$ is the composite of
$f^{\ast} K(C_2)$ and $g^{\ast} K(C_3)$.
Then we have
\[
g(C_1) \ \leq \ g(C_2) \deg f + g(C_3) \deg g + (\deg f - 1) (\deg g - 1).
\]
\end{thm}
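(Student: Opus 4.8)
\textbf{The plan.}
The strategy is the classical one of Castelnuovo and Severi: realise $C_1$ as the normalisation of a curve inside the product surface $S \coloneqq C_2 \times C_3$ and bound its genus by intersection theory on $S$. First I would form the morphism $\phi \coloneqq (f,g) \colon C_1 \to S$ and let $C' \coloneqq \phi(C_1)$ be its image, an integral projective curve on $S$. By construction the function field $\C(C')$ is the subfield of $\C(C_1)$ generated by the images of $\C(C_2)$ and $\C(C_3)$ under the two projections; the hypothesis that $\C(C_1)$ is the compositum of $f^{\ast}\C(C_2)$ and $g^{\ast}\C(C_3)$ says precisely that this subfield is all of $\C(C_1)$. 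Hence $\phi \colon C_1 \to C'$ is birational, $C_1$ is the normalisation of $C'$, and $g(C_1) = p_g(C') \le p_a(C')$. It therefore suffices to bound the arithmetic genus $p_a(C')$.

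Next I would compute on $S$. Write $p,q$ for the two projections and set $\Phi \coloneqq p^{\ast}(\mathrm{pt})$, $\Psi \coloneqq q^{\ast}(\mathrm{pt})$, so that $\Phi^2 = \Psi^2 = 0$ and $\Phi \cdot \Psi = 1$. Since $p \circ \phi = f$, $q \circ \phi = g$, and $\phi$ is birational onto $C'$, one gets $C' \cdot \Phi = \deg f$ and $C' \cdot \Psi = \deg g$. From $\Omega^1_S = p^{\ast}\Omega^1_{C_2} \oplus q^{\ast}\Omega^1_{C_3}$ one has $K_S \equiv (2g(C_2)-2)\Phi + (2g(C_3)-2)\Psi$. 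The one positivity input needed is that $\Phi + \Psi$ is ample on $S$: indeed $(\Phi+\Psi)^2 = 2 > 0$, and $(\Phi+\Psi)\cdot \Gamma > 0$ for every irreducible curve $\Gamma \subset S$ since no curve is contracted by both $p$ and $q$, so Nakai--Moishezon applies. Putting $D \coloneqq C' - (\deg g)\Phi - (\deg f)\Psi$ one checks $D \cdot \Phi = D \cdot \Psi = 0$, hence $D \cdot (\Phi+\Psi) = 0$, and the Hodge index theorem forces $D^2 \le 0$; expanding, $D^2 = (C')^2 - 2\deg f \deg g$, so $(C')^2 \le 2\deg f \deg g$.

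Finally, adjunction on the smooth surface $S$ gives $2p_a(C') - 2 = (C')^2 + C' \cdot K_S = (C')^2 + (2g(C_2)-2)\deg f + (2g(C_3)-2)\deg g$. Substituting the bound $(C')^2 \le 2\deg f \deg g$ and simplifying yields $p_a(C') \le g(C_2)\deg f + g(C_3)\deg g + (\deg f - 1)(\deg g - 1)$, and together with $g(C_1) \le p_a(C')$ this is exactly the asserted inequality. I expect the only genuinely delicate point to be the translation of the compositum hypothesis into the birationality of $\phi \colon C_1 \to C'$: this is what guarantees $\C(C') = \C(C_1)$ rather than a proper subfield and hence lets us pass from $p_a(C')$ back to $g(C_1)$. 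The positivity step (ampleness of $\Phi+\Psi$, i.e. the Hodge index computation) is standard, and the rest is bookkeeping with intersection numbers on the surface $S$, which carries two fibrations to $C_2$ and $C_3$.
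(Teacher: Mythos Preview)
Your argument is correct. The paper itself does not prove this theorem at all; it simply cites \cite[Theorem 3.5]{Accola} and \cite[Chapter III, Theorem 3.11.3]{Stichtenoth}. Your proof via intersection theory on the product surface $C_2\times C_3$ is one of the standard routes: the key translation of the compositum hypothesis into birationality of $\phi$ onto its image is exactly right, and the Hodge-index bound $(C')^2\le 2\deg f\deg g$ combined with adjunction gives the inequality cleanly. For comparison, Stichtenoth's proof is purely field-theoretic (via differents and the Riemann--Hurwitz formula in the function-field setting), so your geometric approach is genuinely different in flavour from at least one of the cited sources, though both are classical.
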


\begin{proof}
See \cite[Theorem 3.5]{Accola}, \cite[Chapter III, Theorem 3.11.3]{Stichtenoth}.
\end{proof}

We shall give an application to the non-existence
of non-constant morphisms to $\PP^1$.

\begin{cor}
\label{Corollary:Castelnuovo-Severi:involution}
Let $C, C'$ be projective smooth integral curves.
Let $\pi \colon C \to C'$ be a non-constant morphism of degree $2$.
Let $f \colon C \to \PP^1$ be a non-constant morphism
of degree $d$ which does not factor through $\pi$.
Then the following inequality holds
\[
g(C) \ \leq \ 2 g(C') + d - 1.
\]
\end{cor}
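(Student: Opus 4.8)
The plan is to apply the Castelnuovo--Severi inequality (Theorem \ref{Theorem:Castelnuovo-Severi}) directly, taking $C_1 = C$, $C_2 = C'$ with the morphism $\pi$, and $C_3 = \PP^1$ with the morphism $f$. Concretely, I would set the morphism called $f$ in Theorem \ref{Theorem:Castelnuovo-Severi} to be $\pi$ (so that $\deg = 2$) and the morphism called $g$ there to be our $f$ (so that $\deg = d$). The only hypothesis of Theorem \ref{Theorem:Castelnuovo-Severi} that is not immediate is that $\C(C)$ is the composite of the two subfields $\pi^{\ast}\C(C')$ and $f^{\ast}\C(\PP^1)$; once this is checked, the conclusion drops out by arithmetic.

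To verify the composite condition, let $L \coloneqq \pi^{\ast}\C(C') \cdot f^{\ast}\C(\PP^1) \subseteq \C(C)$ be the subfield generated by the two. Since $\pi$ has degree $2$ we have $[\C(C) : \pi^{\ast}\C(C')] = 2$, and $\pi^{\ast}\C(C') \subseteq L \subseteq \C(C)$, so $[\C(C) : L]$ divides $2$, hence equals $1$ or $2$. If $[\C(C) : L] = 2$, then $L = \pi^{\ast}\C(C')$, which forces $f^{\ast}\C(\PP^1) \subseteq \pi^{\ast}\C(C')$. By the anti-equivalence between subfields of $\C(C)$ containing $\C$ and (isomorphism classes of) dominant morphisms out of $C$, this inclusion means precisely that $f$ factors through $\pi$, contradicting the hypothesis. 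Therefore $[\C(C) : L] = 1$, i.e. $\C(C) = L$ is the composite of $\pi^{\ast}\C(C')$ and $f^{\ast}\C(\PP^1)$, as required.

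Now Theorem \ref{Theorem:Castelnuovo-Severi} applies, and with $g(\PP^1) = 0$, $\deg \pi = 2$, $\deg f = d$ it yields
\[
g(C) \ \leq \ g(C')\cdot 2 + g(\PP^1)\cdot d + (2-1)(d-1) \ = \ 2 g(C') + d - 1,
\]
which is the desired inequality.

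I do not anticipate any genuine obstacle here: the argument is a formal invocation of Theorem \ref{Theorem:Castelnuovo-Severi}, and the single point requiring care — the composite-field hypothesis — is handled by the elementary observation that the degree $[\C(C):\pi^{\ast}\C(C')] = 2$ is prime, so the intermediate field $L$ is forced to be everything as soon as $f$ does not already factor through $\pi$. The only thing to be mildly careful about is matching the roles of the two morphisms in the inequality correctly, so that the ``$2$'' multiplies $g(C')$ and the cross term is $(2-1)(d-1) = d-1$ rather than $(d-1)\cdot 1$ in some other arrangement — but both give the same bound since $(\deg f - 1)(\deg g - 1)$ is symmetric.
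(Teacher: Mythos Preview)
Your proof is correct and follows exactly the same approach as the paper's: a direct application of the Castelnuovo--Severi inequality (Theorem \ref{Theorem:Castelnuovo-Severi}) with $\pi$ and $f$ as the two morphisms. The only difference is that you spell out the verification of the composite-field hypothesis using the primality of $\deg \pi = 2$, whereas the paper leaves this implicit.
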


\begin{proof}
Applying Theorem \ref{Theorem:Castelnuovo-Severi}
to $f$ and $\pi$, we have
\[
g(C) \ \leq \ 0 + 2 g(C') + (\deg f - 1) (\deg \pi - 1)
= 2 g(C') + d - 1.
\]
\end{proof}

\begin{ex}
\label{Example:DoubleCover}
Here we give a construction of curves $C,B$
satisfying the conditions of Remark \ref{Remark:ExampleCurve}.
Let $B$ be a projective smooth curve of genus $2$ over $\C$.
Let $n \geq 8$.
Take distinct points
$x_1,\ldots,x_{2n} \in B(\C)$.
Let $\pi \colon C \to B$ be a double covering
ramified only at $x_1,\ldots,x_{2n}$.
By the Riemann--Hurwitz formula, we have $g(C) = n + 3 \geq 11$.
By Corollary \ref{Corollary:Castelnuovo-Severi:involution},
there does not exist a non-constant morphism
$C \to \PP^1$ of degree $5$.
On the other hand, since every genus $2$ curve is hyperelliptic,
we have a degree 2 morphism $B \to \PP^1$.
Thus we have a non-constant morphism
$C \to \PP^1$ of degree $4$.
It is well-known that
if we take $C \to B$ sufficiently general,
both $\Jac(C)/\pi^{\ast} \Jac(B)$ and $\Jac(B)$ have Picard number $1$;
see \cite[Corollary 5.3]{Biswas}.

By a specialization argument (see \cite[Theorem 1.1]{Maulik-Poonen}),
there exists a double covering $\pi \colon C \to B$
defined over a number field such that
both $\Jac(C)/\pi^{\ast} \Jac(B)$ and $\Jac(B)$
have geometric Picard number $1$.
Hence $\Jac(C)/\pi^{\ast} \Jac(B)$ and $\Jac(B)$
are $\overline{\Q}$-simple abelian varieties.
Since $\Jac(C)$ is isogenous to
$(\Jac(C)/\pi^{\ast} \Jac(B)) \times \Jac(B)$,
we see that $\Jac(C)$ does not contain an elliptic curve
over $\overline{\Q}$.
\end{ex}

\section{Applications to modular curves}
\label{Section:ModularCurves}

\subsection{The gonality of modular curves}
\label{Subsection:GonalityModularCurves}

We follow the notation of \cite{Freitas-Le Hung-Siksek}.
For the background on modular curves,
see \cite[Section 2]{Freitas-Le Hung-Siksek}.

First, we recall finite index subgroups of $\SL_2(\Z)$
corresponding to the modular curves
\[
X_0(105), \quad
X(\mathrm{s3},\mathrm{b5},\mathrm{b7}), \quad
X(\mathrm{b3},\mathrm{b5},\mathrm{ns7}), \quad
X(\mathrm{b3},\mathrm{b5},\mathrm{e7}), \quad
X(\mathrm{s3},\mathrm{b5},\mathrm{e7}).
\]
For $p \in \{ 3,5,7 \}$,
let $B(p) \subset \GL_2(\F_p)$ be a Borel subgroup,
and $C_{\mathrm{s}}^{+}(p) \subset \GL_2(\F_p)$
the normalizer of a split Cartan subgroup,
and $C_{\mathrm{ns}}^{+}(p)$
the normalizer of a non-split Cartan subgroup.
When $p = 7$, we also have another subgroup of $\GL_2(\F_7)$
defined by
\[
G(\mathrm{e7}) = \bigg\langle
\begin{pmatrix} 0 & 5 \\ 3 & 0 \end{pmatrix},\ 
\begin{pmatrix} 5 & 0 \\ 3 & 2 \end{pmatrix}
\bigg\rangle
\ \subset \ \GL_2(\F_7).
\]
Up to conjugacy, it is a subgroup of $C_{\mathrm{ns}}^{+}(7)$
of index $2$.
In the following list, we summarize the number of elements of these groups.

\vspace{0.1in}

\begin{center}
\begin{tabular}{|c|c|c|c|c|c|}
\hline
& $\GL_2(\F_p)$ & $B(p)$ & $C_{\mathrm{s}}^{+}(p)$ & $C_{\mathrm{ns}}^{+}(p)$ & $G(\mathrm{e7})$ \\
\hline
$p$ & $p (p - 1)^2 (p + 1)$ & $p (p-1)^2$ & $2 (p-1)^2$ & $2 (p^2 - 1)$ &  \\
\hline
3 & 48 & 12 & 8 & 16 &  \\
\hline
5 & 480 & 80 & 32 & 48 &  \\
\hline
7 & 2016 & 252 & 72 & 96 & 48 \\
\hline
\end{tabular}
\end{center}

\vspace{0.1in}

In the following table, we summarize the
level structure corresponding to each modular curve,
and the index of level subgroups inside
$\SL_2(\Z)$.

\vspace{0.1in}

\begin{center}
\begin{tabular}{|c|c|c|c|c|c|c|c|c|}
\hline
curve &
\makecell{level \\ at $3$} &
\makecell{level \\ at $5$} &
\makecell{level \\ at $7$}
& index & genus &
\makecell{gonality \\ (proved)} &
\makecell{gonality \\ (expected)} \\
\hline
$X_0(105)$ & $B(3)$ & $B(5)$ & $B(7)$ &
192 & 13 & $\geq 1.904$ & $\geq 2$ \\
\hline
$X(\mathrm{s3},\mathrm{b5},\mathrm{b7})$ & $C_{\mathrm{s}}^{+}(3)$ & $B(5)$ & $B(7)$ &
288 & 21 & $\geq 2.856$ & $\geq 3$ \\
\hline
$X(\mathrm{b3},\mathrm{b5},\mathrm{ns7})$ & $B(3)$ & $B(5)$ & $C_{\mathrm{ns}}^{+}(7)$ &
504 & 37 & $\geq 4.999$ & $\geq 5.25$ \\
\hline
$X(\mathrm{b3},\mathrm{b5},\mathrm{e7})$ & $B(3)$ & $B(5)$ & $G(\mathrm{e7})$ &
1008 & 73 & $\geq 9.998$ & $\geq 10.5$ \\
\hline
$X(\mathrm{s3},\mathrm{b5},\mathrm{e7})$ & $C_{\mathrm{s}}^{+}(3)$ & $B(5)$ & $G(\mathrm{e7})$ &
1512 & 153 & $\geq 14.9963$ & $\geq 15.75$ \\
\hline
\end{tabular}
\end{center}

\vspace{0.1in}

The column ``level at $p$'' ($p = 3,5,7$) means the subgroup of
$\GL_2(\F_p)$ defining a congruence subgroup of $\SL_2(\Z)$.
More precisely,
if the column ``level at $p$'' is $G_p \subset \GL_2(\F_p)$ for $p = 3,5,7$,
the corresponding modular curve is $\Gamma \backslash \mathbb{H}$,
where
\[ \mathbb{H} \coloneqq \{ \tau \in \C \mid \mathrm{Im}\,\tau > 0\} \]
is the complex upper-half plane,
and $\Gamma$ is the congruence subgroup of $\SL_2(\Z)$
defined by
\[ \Gamma \coloneqq \{ g \in \SL_2(\Z) \mid g \ (\text{mod} \ p) \in G_p \  (p = 3,5,7) \}. \]
Since the congruence subgroups we consider in this paper contain
$- I_2$, the index of congruence subgroups inside
$\SL_2(\Z)$ and $\PSL_2(\Z)$ are the same.
We calculated the genus of
$X_0(105)$,
$X(\mathrm{s3},\mathrm{b5},\mathrm{b7})$,
and $X(\mathrm{b3},\mathrm{b5},\mathrm{ns7})$
using LMFDB.
See Appendix \ref{Appendix:Calculation} for details.
We will not use the values of the genera of
$X(\mathrm{b3},\mathrm{b5},\mathrm{e7})$,
$X(\mathrm{s3},\mathrm{b5},\mathrm{e7})$
in this paper.
These values are taken from
\cite[Introduction]{Box:Modularity}.
The column ``gonality (proved)'' contains the lower bound
of the gonality
calculated by Abramovich's theorem
(see Theorem \ref{Theorem:gonalitylowerbound} below)
combined with Kim--Sarnak's lower bound of $\lambda_1$.
The column ``gonality (expected)'' contains the lower bound
calculated assuming Selberg's conjecture.

\begin{thm}[Abramovich {\cite[Theorem 0.1]{Abramovich:Gonality}}]
\label{Theorem:gonalitylowerbound}
Let $\Gamma \subset \PSL_2(\Z)$ be a finite index subgroup,
and $X_{\Gamma} \coloneqq \Gamma \backslash \mathbb{H}$
the corresponding modular curve.
Let $\lambda_1$ be the smallest positive eigenvalue of the Laplacian on $X_{\Gamma}$.
Then the gonality of $X_{\Gamma}$ is greater than or equal to
$\lambda_1/24 \cdot [\PSL_2(\Z) : \Gamma]$.
\end{thm}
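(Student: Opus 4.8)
The plan is to deduce this from the Li--Yau inequality relating the first eigenvalue of the Laplacian of a Riemann surface to the conformal volume of a map onto a sphere; this is the argument of \cite{Abramovich:Gonality}. Here ``gonality'' means the $\C$-gonality, and on a finite-volume non-compact $X_\Gamma$ one reads ``smallest positive eigenvalue of the Laplacian'' as the bottom of the spectrum of $-\Delta$ acting on the orthogonal complement of the constants in $L^2(X_\Gamma)$ for the hyperbolic metric.

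First I would fix a non-constant morphism $f \colon X_\Gamma \to \PP^1$ of degree $d$ equal to the gonality and identify $\PP^1$ with the round sphere $S^2$. Being a morphism of Riemann surfaces, $f$ is conformal, hence a conformal branched covering of degree $d$ onto $S^2$. Next I would put on $X_\Gamma$ (with its cusps filled in) the hyperbolic metric $g$ descended from the Poincar\'e metric on $\mathbb{H}$. Since the natural map $X_\Gamma \to X(1) = \PSL_2(\Z)\backslash\mathbb{H}$ has degree $[\PSL_2(\Z):\Gamma]$ and the standard fundamental domain of $\PSL_2(\Z)$ has hyperbolic area $\pi/3$, the total area is $\mathrm{Area}(X_\Gamma,g) = \tfrac{\pi}{3}\,[\PSL_2(\Z):\Gamma]$.

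Then I would invoke the Li--Yau inequality: for any metric $h$ in the conformal class of a compact Riemann surface $M$ and any conformal branched covering $\phi \colon M \to S^2$ of degree $k$, one has $\lambda_1(M,h)\cdot\mathrm{Area}(M,h) \le 8\pi k$. Applying this with $M = X_\Gamma$, $h = g$, $\phi = f$ and $k = d$ yields $\lambda_1 \cdot \tfrac{\pi}{3}[\PSL_2(\Z):\Gamma] \le 8\pi d$, that is, $d \ge \tfrac{\lambda_1}{24}\,[\PSL_2(\Z):\Gamma]$, as claimed.

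The main obstacle is the non-compactness of $X_\Gamma$: filling in the cusps (and elliptic points), $g$ becomes only a finite-area singular metric on the compactification, so the smooth Li--Yau inequality cannot be quoted verbatim. One must verify that Li--Yau's proof still applies --- it uses as test functions in the Rayleigh quotient only the ambient coordinate functions of $S^2$ pulled back along $f$ and recentred by a suitable conformal automorphism of $S^2$, and these remain admissible for a singular metric of finite area --- or else approximate $g$ by smooth metrics in the same conformal class and pass to the limit. One also has to justify the spectral-theoretic interpretation of $\lambda_1$ on a non-compact finite-volume hyperbolic surface; this is precisely where one must rely on \cite{Abramovich:Gonality}. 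Finally, to turn the bound into the numerical estimates of the table one combines it with a lower bound on $\lambda_1$, such as Selberg's $\lambda_1 \ge 3/16$ or the sharper Kim--Sarnak bound.
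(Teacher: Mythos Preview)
The paper does not give its own proof of this theorem; it is stated with a citation to \cite{Abramovich:Gonality} and used as a black box. Your sketch is essentially Abramovich's original argument via the Li--Yau inequality, and you have correctly identified both the area computation and the main technical issue (that the hyperbolic metric is singular at cusps and elliptic points, so Li--Yau must be justified in that setting, as Abramovich does).
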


\begin{rem}
The lower bound $\lambda_1 \geq 21/100$ is used in
Abramovich's paper \cite[Theorem 0.1]{Abramovich:Gonality}.
This lower bound was proved by Luo--Rudnick--Sarnak.
After the publication of \cite{Abramovich:Gonality},
Kim--Sarnak proved a better lower bound
$\lambda_1 \geq 975/4096$;
see \cite[Appendix 2, p.176]{Kim-Sarnak}.
We use Kim--Sarnak's bound to calculate the lower bound of the gonality
in the above table.
If Selberg's conjecture is true, then we have $\lambda_1 \geq 1/4$.
\end{rem}

\subsection{Proof of Theorem
\ref{MainTheorem:FinitenessLowDegreePointsModularCurves}
for $X_0(105)$}

The following result is proved in
Appendix \ref{Appendix:X0(105)}.

\begin{lem}
\label{Lemma:X0(105)}
\begin{enumerate}
\item The genus of $X_0(105)$ is $13$.
\item The genus of $X_0(105)/w_{35}$ is $3$.
\item The Mordell--Weil group $J_0(105)(\Q)$ is finite.
\end{enumerate}
\end{lem}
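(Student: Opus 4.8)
The plan is to verify the three assertions by a direct computation using the database LMFDB, as announced in the paper. For the genus of $X_0(105)$ (item (1)), I would simply apply the classical genus formula for $\Gamma_0(N)$ with $N = 105 = 3 \cdot 5 \cdot 7$; since $105$ is squarefree and odd, the number of elliptic points of order $2$ and $3$, the number of cusps, and the index $[\PSL_2(\Z) : \Gamma_0(105)] = 192$ are all easy to read off, and the formula $g = 1 + \tfrac{\mu}{12} - \tfrac{\nu_2}{4} - \tfrac{\nu_3}{3} - \tfrac{\nu_\infty}{2}$ gives $g(X_0(105)) = 13$. This matches the value recorded in the table in Section \ref{Subsection:GonalityModularCurves} and in LMFDB.

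For item (2), I would use the fact that the Atkin--Lehner involution $w_{35}$ acts on $X_0(105)$, and that the genus of the quotient $X_0(105)/w_{35}$ can be computed from the trace of $w_{35}$ on the space $S_2(\Gamma_0(105))$ of weight $2$ cusp forms via $g(X_0(105)/w_{35}) = \tfrac{1}{2}\bigl(g(X_0(105)) + 1 - \tfrac{1}{2}(\#\mathrm{Fix}(w_{35}))\bigr)$, equivalently by counting how many of the $13$ eigenvalues of $w_{35}$ on $S_2(\Gamma_0(105))$ equal $+1$. LMFDB records the Atkin--Lehner eigenvalues of each newform of level dividing $105$, so this is a finite check: one reads off the $w_{35}$-eigenvalue for each Galois-conjugacy class of newforms of levels $35$, $105$ (and the relevant oldform contributions), and the count of $+1$ eigenspaces gives $g(X_0(105)/w_{35}) = 3$.

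For item (3), the Jacobian $J_0(105)$ is isogenous over $\Q$ to a product of modular abelian varieties $A_f$ attached to the Galois-conjugacy classes of newforms of weight $2$ and level dividing $105$ (with old part multiplicities). By Remark \ref{Remark:BSDknowncases}, if each such $A_f$ has analytic rank $0$ then each $A_f(\Q)$ is finite, hence $J_0(105)(\Q)$ is finite. So the task is to check in LMFDB that every newform of weight $2$, trivial character, and level $N \mid 105$ has analytic rank $0$ — equivalently that no such newform has a vanishing $L$-value at the central point. This is the only place where one genuinely relies on external data, and it is a routine table lookup.

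The main obstacle is not mathematical depth but rather the bookkeeping in item (3): one must be careful to include all levels $N \in \{1,3,5,7,15,21,35,105\}$ dividing $105$, account for the multiplicities with which oldforms appear in $S_2(\Gamma_0(105))$, and confirm that the analytic-rank-$0$ condition (hence the Gross--Zagier--Kolyvagin--Logachev or Kato input) applies to every constituent; an oversight of a single conjugacy class with positive analytic rank would break the argument. The precise LMFDB references and the enumeration of newforms are carried out in Appendix \ref{Appendix:X0(105)}.
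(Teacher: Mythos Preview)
Your proposal is correct and follows essentially the same approach as the paper's Appendix~\ref{Appendix:X0(105)}: enumerate the newforms of weight~$2$ and level dividing~$105$ via LMFDB, compute the genus of the quotient as the dimension of the $(+1)$-eigenspace of $w_{35}$ on $S_2(\Gamma_0(105))$ (using Lemma~\ref{Lemma:AtkinLehnerOldForms} for the oldform contributions from levels $15$ and $21$), and verify that every contributing newform has analytic rank~$0$ so that Remark~\ref{Remark:BSDknowncases} applies. The only cosmetic difference is that for item~(1) you invoke the classical genus formula for $\Gamma_0(N)$, whereas the paper reads off $g(X_0(105))=13$ as the total dimension $\sum_f (\dim A_f)\cdot n_f$ from the same LMFDB table; both are standard and yield the same value.
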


The following result is proved by an application of
the Castelnuovo-Severi inequality.

\begin{lem}
\label{Lemma:X0(105)MorphismtoP1}
Let $f \colon X_0(105) \to \PP^1$ be a non-constant morphism
of degree $d$ over $\C$ which does not factor through
$X_0(105) \to X_0(105)/w_{35}$.
Then we have $d \geq 8$.
\end{lem}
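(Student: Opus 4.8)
The plan is to apply the Castelnuovo--Severi inequality (Theorem~\ref{Theorem:Castelnuovo-Severi}) in the form of Corollary~\ref{Corollary:Castelnuovo-Severi:involution}, with $C = X_0(105)$, $C' = X_0(105)/w_{35}$, and $\pi \colon X_0(105) \to X_0(105)/w_{35}$ the quotient map, which is a non-constant morphism of degree $2$. By Lemma~\ref{Lemma:X0(105)} we have $g(C) = 13$ and $g(C') = 3$. Suppose $f \colon X_0(105) \to \PP^1$ is a non-constant morphism of degree $d$ which does not factor through $\pi$. Then Corollary~\ref{Corollary:Castelnuovo-Severi:involution} gives
\[
13 \ = \ g(C) \ \leq \ 2 g(C') + d - 1 \ = \ 6 + d - 1 \ = \ d + 5,
\]
hence $d \geq 8$.

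The only genuine input needed beyond the Castelnuovo--Severi inequality itself is the identification of the right intermediate curve and its genus: one must know that $X_0(105)/w_{35}$ has genus $3$, which is exactly part (2) of Lemma~\ref{Lemma:X0(105)} (proved via LMFDB in the appendix). Once that genus is in hand, the numerology is immediate and there is no real obstacle; the argument is a one-line consequence of Corollary~\ref{Corollary:Castelnuovo-Severi:involution}. The main subtlety worth noting is that Corollary~\ref{Corollary:Castelnuovo-Severi:involution} requires the compositum hypothesis of Theorem~\ref{Theorem:Castelnuovo-Severi}, i.e.\ that $\C(X_0(105))$ is generated by $f^{\ast}\C(\PP^1)$ together with $\pi^{\ast}\C\big(X_0(105)/w_{35}\big)$; this holds precisely because $f$ is assumed not to factor through $\pi$, since $\pi$ has prime degree $2$, so the only intermediate field strictly between $\pi^{\ast}\C(X_0(105)/w_{35})$ and $\C(X_0(105))$ is the whole field, and $f^{\ast}\C(\PP^1)$ is not contained in $\pi^{\ast}\C(X_0(105)/w_{35})$.

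I would therefore present the proof as: invoke Lemma~\ref{Lemma:X0(105)}(2) to get $g(X_0(105)/w_{35}) = 3$; apply Corollary~\ref{Corollary:Castelnuovo-Severi:involution} with the degree-$2$ map $\pi$ and the non-factoring degree-$d$ map $f$; read off $13 \leq 2\cdot 3 + d - 1$, i.e.\ $d \geq 8$. This completes the proof.
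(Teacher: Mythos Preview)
Your proof is correct and matches the paper's own argument essentially line for line: apply Corollary~\ref{Corollary:Castelnuovo-Severi:involution} with the degree-$2$ quotient $X_0(105) \to X_0(105)/w_{35}$ and the genera $13$ and $3$ from Lemma~\ref{Lemma:X0(105)} to obtain $13 \leq d+5$. Your additional remark about the compositum hypothesis is fine but already absorbed into the statement of Corollary~\ref{Corollary:Castelnuovo-Severi:involution}, so it need not be spelled out here.
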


\begin{proof}
By Corollary \ref{Corollary:Castelnuovo-Severi:involution},
we have
\[
g(X_0(105)) = 13 \ \leq \ 2 g(X_0(105)/w_{35}) + d - 1 = d + 5.
\]
\end{proof}

We shall prove
Theorem
\ref{MainTheorem:FinitenessLowDegreePointsModularCurves}
for $X_0(105)$.
By Lemma \ref{Lemma:X0(105)MorphismtoP1},
there does not exist a non-constant morphism
$X_0(105) \to \PP^1$ of degree $5$ defined over $\C$.
By Theorem \ref{Theorem:FinitenessLowDegreePoints1},
the proof of
Theorem \ref{MainTheorem:FinitenessLowDegreePointsModularCurves}
for $X_0(105)$ is complete.
\qed

We can determine the $\Q$-gonality and the $\C$-gonality of $X_0(105)$.
Although it may be well-known to the specialists,
we give a sketch of the proof for the readers' convenience.

\begin{prop}
\label{Proposition:GonalityX0(105)}
Both of the $\Q$-gonality and the $\C$-gonality of $X_0(105)$ are $6$.
\end{prop}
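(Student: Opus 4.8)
To show the $\C$-gonality of $X_0(105)$ is $6$, I would establish an upper bound and a lower bound separately. For the upper bound, the natural candidate is the composition $X_0(105) \to X_0(105)/w_{35} \to \PP^1$, where the second map is a degree $2$ map coming from the fact that $X_0(105)/w_{35}$ has genus $3$ and is (one checks) hyperelliptic, or alternatively a degree $3$ map if that quotient happens to be trigonal; composing with the degree $2$ quotient map $X_0(105)\to X_0(105)/w_{35}$ gives a map to $\PP^1$ of degree $6$ (or $4$ if trigonal). So first I would pin down the gonality of the genus $3$ curve $X_0(105)/w_{35}$, and more carefully exhibit an explicit degree $6$ map --- the cleanest source is a further Atkin--Lehner quotient: for instance $X_0(105)/\langle w_3, w_{35}\rangle = X_0(105)/\langle w_3, w_5, w_7\rangle$, which should have genus $0$ or $1$, yielding maps of small degree that pull back. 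Concretely, there are maps $X_0(105)\to X_0(105)/\langle w_3,w_5\rangle$, etc.; one of these composites lands in a rational or elliptic quotient and gives degree $\le 6$.

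**Lower bound.** The heart of the matter is showing the gonality is not $\le 5$. Here I would invoke Lemma \ref{Lemma:X0(105)MorphismtoP1}: any non-constant $f\colon X_0(105)\to\PP^1$ of degree $d$ that does \emph{not} factor through $X_0(105)\to X_0(105)/w_{35}$ must have $d\ge 8$. Hence a map of degree $5$ would have to factor through $X_0(105)/w_{35}$, i.e.\ $5 = 2 \cdot (\text{degree of a map } X_0(105)/w_{35}\to\PP^1)$, which is impossible since $5$ is odd. The same parity argument rules out degree $\le 5$ except $d\in\{2,3\}$; but $X_0(105)$ has genus $13$, so it is neither hyperelliptic ($d=2$) nor trigonal ($d=3$) --- indeed any such map would again factor through $w_{35}$ by Lemma \ref{Lemma:X0(105)MorphismtoP1}, forcing $d$ even, contradiction, so in fact $d\ge 4$ already, and then $d=4$ is excluded by the same parity/Castelnuovo--Severi reasoning (a degree $4$ map not factoring through $w_{35}$ needs $d\ge 8$; one factoring through it needs $d$ even and the quotient map of degree $2$, i.e.\ $X_0(105)/w_{35}$ hyperelliptic --- this case must be separately excluded, or simply absorbed since the conclusion ``gonality $\ge 6$'' only needs ruling out $d=4$, which is even, so I must check the genus $3$ curve $X_0(105)/w_{35}$ is \emph{not} hyperelliptic, or else gonality would be $4$). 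This last point is the subtle one.

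**The main obstacle, and the $\Q$-gonality.** The delicate step is the case $d=4$: I need to know whether $X_0(105)/w_{35}$, a genus $3$ curve, is hyperelliptic or a smooth plane quartic. If it is a smooth plane quartic (non-hyperelliptic), then no degree $2$ map to $\PP^1$ exists, $d=4$ via factoring is excluded, and combined with the above the $\C$-gonality is exactly $6$; the degree $6$ map then comes from the canonical plane quartic model composed with projection from a point, or from the $w_{35}$-quotient followed by a $g^1_3$ if the quartic is such --- but a smooth plane quartic has gonality $3$, giving a degree $6$ map upstairs, matching the lower bound perfectly. So I would determine the hyperelliptic-or-not status of $X_0(105)/w_{35}$ (via its newform decomposition / explicit equations from LMFDB, or a known classification of quotients of $X_0(N)$), expecting it to be a non-hyperelliptic genus $3$ curve with gonality $3$. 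Finally, for the $\Q$-gonality: since the $\C$-gonality is $6$ and all the maps constructed (Atkin--Lehner quotients, the degree $2$ covers, and the $g^1_3$ on the plane quartic if it is defined over $\Q$) are defined over $\Q$, the $\Q$-gonality is also $6$ --- I only need to check that the specific degree $6$ map realizing the bound descends to $\Q$, which it does because the Atkin--Lehner involutions and the relevant low-degree pencil on the genus $3$ quotient are rational. I would present this as a sketch, citing LMFDB for the equation of $X_0(105)/w_{35}$ and the classification of Atkin--Lehner quotients for the genus computations.
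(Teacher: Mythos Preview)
Your approach is essentially the same as the paper's: the lower bound comes from Lemma~\ref{Lemma:X0(105)MorphismtoP1} (odd degrees $\leq 7$ are excluded since they cannot factor through the degree-$2$ quotient, and even degrees $\leq 4$ are excluded once one knows $X_0(105)/w_{35}$ is non-hyperelliptic), and the upper bound comes from composing the degree-$2$ quotient map with a $g^1_3$ on the genus-$3$ plane quartic $X_0(105)/w_{35}$. The paper streamlines two points you left tentative: it cites \cite{Furumoto-Hasegawa} for the fact that $X_0(105)/w_{35}$ is non-hyperelliptic, and it obtains the $\Q$-rational $g^1_3$ concretely by projecting the plane quartic from a $\Q$-rational cusp.
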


\begin{proof}
It is well-known that
$X_0(105)/w_{35}$ is a non-hyperelliptic curve of genus $3$;
see \cite[Theorem 3]{Furumoto-Hasegawa}.
(See also \cite[Section 4]{Le Hung:Thesis}.
An explicit defining equation can be found in
\cite[Section 3.3]{Box:Modularity}.)
It is isomorphic to a smooth plane quartic.
Taking the projection from a $\Q$-rational point on $X_0(105)/w_{35}$
(e.g., a $\Q$-rational cusp),
we have a non-constant morphism
$X_0(105)/w_{35} \to \PP^1$ of degree $3$ over $\Q$.
Hence we have a non-constant morphism
$X_0(105) \to \PP^1$ of degree $6$ over $\Q$.

It remains to prove that there does not exist
a non-constant morphism
$f \colon X_0(105) \to \PP^1$ of degree $\leq 5$ over $\C$.
When $\deg f$ is odd, the assertion follows from 
Lemma \ref{Lemma:X0(105)MorphismtoP1}.
Assume $\deg f \in \{ 2,4 \}$.
By Lemma \ref{Lemma:X0(105)MorphismtoP1},
$f$ factors through
$X_0(105) \to X_0(105)/w_{35}$.
However, since $X_0(105)/w_{35}$ is non-hyperelliptic of genus $3$,
there does not exist a non-constant morphism
$X_0(105)/w_{35} \to \PP^1$ of degree $\leq 2$ over $\C$.
This completes the proof.
\end{proof}

\subsection{Proof of Theorem
\ref{MainTheorem:FinitenessLowDegreePointsModularCurves}
for $X(\mathrm{s3},\mathrm{b5},\mathrm{b7})$}

The following result is proved in
Appendix \ref{Appendix:X(s3b5b7)}.

\begin{lem}
\label{Lemma:X(s3b5b7)}
\begin{enumerate}
\item The genus of $X(\mathrm{s3},\mathrm{b5},\mathrm{b7})$ is $21$.
\item The genus of $X(\mathrm{s3},\mathrm{b5},\mathrm{b7})/w_{35}$ is $7$.
\item Let $f$ be a cusp form of weight $2$ corresponding to
a holomorphic differential form on $X(\mathrm{s3},\mathrm{b5},\mathrm{b7})$.
Then, either the analytic rank of $f$ is $0$,
or the Hecke field of $f$ (i.e.,\ the field generated by the Fourier coefficients of $f$) is larger than $\Q$.
\end{enumerate}
\end{lem}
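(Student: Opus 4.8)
The plan is to realize $X(\mathrm{s3},\mathrm{b5},\mathrm{b7})$ and its Atkin--Lehner quotient explicitly as quotients of a classical modular curve $X_0(N)$, and then to read off all three assertions from the decomposition of $S_2(\Gamma_0(N))$ into Hecke eigenforms recorded in LMFDB. Concretely, there is a classical isomorphism $X(\mathrm{s3})\cong X_0(9)/w_9$ for the normalizer of a split Cartan subgroup at $3$, and combining the level structures by the Chinese Remainder Theorem yields
\[
X(\mathrm{s3},\mathrm{b5},\mathrm{b7})\ \cong\ X_0(315)/\langle w_9\rangle,
\qquad
X(\mathrm{s3},\mathrm{b5},\mathrm{b7})/w_{35}\ \cong\ X_0(315)/\langle w_9,w_{35}\rangle,
\]
where $w_9,w_5,w_7$ are the Atkin--Lehner involutions of $X_0(315)$, $w_{35}=w_5w_7$, and $\langle w_9,w_{35}\rangle=\{1,w_9,w_{35},w_{315}\}$ is a Klein four-group. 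Since the holomorphic differentials on a quotient $X_0(315)/W$ are exactly the $W$-invariant weight-$2$ cusp forms,
\[
g\bigl(X(\mathrm{s3},\mathrm{b5},\mathrm{b7})\bigr)=\dim S_2(\Gamma_0(315))^{w_9},
\qquad
g\bigl(X(\mathrm{s3},\mathrm{b5},\mathrm{b7})/w_{35}\bigr)=\dim S_2(\Gamma_0(315))^{\langle w_9,w_{35}\rangle},
\]
and a cusp form gives a differential on $X(\mathrm{s3},\mathrm{b5},\mathrm{b7})$ precisely when it lies in the $w_9$-invariant subspace. (Alternatively, one can look $X(\mathrm{s3},\mathrm{b5},\mathrm{b7})$ up directly in the modular curves database of LMFDB, which records its genus together with the newform decomposition of its Jacobian.)

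For (1) and (2) I would decompose $S_2(\Gamma_0(315))$, using LMFDB, into Galois orbits of newforms of the levels $M$ dividing $315$, each appearing with its oldform multiplicity $\sigma_0(315/M)$, and record the Atkin--Lehner eigenvalues $\varepsilon_9,\varepsilon_5,\varepsilon_7$ on each piece. When $9\mid M$ the operator $w_9$ acts on the whole $f$-isotypic oldspace by the scalar $\varepsilon_9(f)=\pm1$; when $3$ exactly divides $M$ it permutes the degeneracy copies, so for those blocks I would instead compute via
\[
\dim S_2(\Gamma_0(315))^W=\frac{1}{|W|}\sum_{w\in W}\operatorname{Tr}\!\bigl(w\mid S_2(\Gamma_0(315))\bigr)
\]
and the standard trace formulas for Atkin--Lehner operators (equivalently, read the genera of the Atkin--Lehner quotients of $X_0(315)$ off LMFDB directly). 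Summing the dimensions of the pieces with $\varepsilon_9=+1$ should give $21$, and summing those with $\varepsilon_9=+1$ and $\varepsilon_5\varepsilon_7=+1$ should give $7$. As a consistency check one can recompute the first genus from the index $288$ via the formula $1+\tfrac{288}{12}-\tfrac{\nu_2}{4}-\tfrac{\nu_3}{3}-\tfrac{\nu_\infty}{2}$ after counting the elliptic points and cusps of the relevant congruence subgroup, and verify Riemann--Hurwitz for the degree-$2$ map to the quotient.

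For (3) it then remains to run through the eigenform orbits in the $w_9$-invariant part of $S_2(\Gamma_0(315))$ whose Hecke field is $\Q$, which are finitely many weight-$2$ newforms of conductor dividing $315$, i.e.\ a short list of isogeny classes of elliptic curves over $\Q$, and to check in LMFDB that each has $L(f,1)\neq 0$, i.e.\ analytic rank $0$; by Remark~\ref{Remark:BSDknowncases} this is a rigorous input. Eigenforms in that subspace with larger Hecke field are permitted by the statement, so nothing more is needed.

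The main obstacle is bookkeeping rather than anything conceptual: one must track the Atkin--Lehner action on the old part of $S_2(\Gamma_0(315))$ correctly, in particular on the blocks where the $3$-part of the newform level is $3$ rather than $9$, on which $w_9$ mixes the degeneracy copies; and one must check that the involution denoted $w_{35}$ of $X(\mathrm{s3},\mathrm{b5},\mathrm{b7})$ is indeed the one induced by $w_5w_7$ on $X_0(315)$ and commutes with $w_9$, so that the iterated quotient really is $X_0(315)/\langle w_9,w_{35}\rangle$. Everything else is a routine extraction from LMFDB.
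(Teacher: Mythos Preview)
Your approach is essentially the same as the paper's: the authors likewise use the isomorphism $X(\mathrm{s3},\mathrm{b5},\mathrm{b7})\cong X_0(315)/w_9$, list the $15$ Galois orbits of newforms of level dividing $315$ from LMFDB together with their Atkin--Lehner signs, and compute the $w_9$- and $w_{35}$-fixed subspaces via an explicit lemma (their Lemma~\ref{Lemma:AtkinLehnerOldForms}) describing the Atkin--Lehner action on oldform blocks, which handles exactly the bookkeeping you flag for levels with $3$-part $3$ (and also the case $3\nmid M$, which you should not forget). For (3) they observe that only 315.2.a.a and 315.2.a.c have positive analytic rank, the former is killed by $w_9$, and the latter has Hecke field $\Q(\sqrt{2})$---precisely the check you propose.
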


The following lemma can be proved by the same way
as Lemma \ref{Lemma:X0(105)MorphismtoP1}.

\begin{lem}
\label{Lemma:X(s3b5b7)MorphismtoP1}
Let $f \colon X(\mathrm{s3},\mathrm{b5},\mathrm{b7}) \to \PP^1$
be a non-constant morphism
of degree $d$ over $\C$ which does not factor through
$X(\mathrm{s3},\mathrm{b5},\mathrm{b7}) \to X(\mathrm{s3},\mathrm{b5},\mathrm{b7})/w_{35}$.
Then we have $d \geq 8$.
\end{lem}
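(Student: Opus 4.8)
The plan is to deduce this immediately from the Castelnuovo--Severi inequality, exactly as in the proof of Lemma~\ref{Lemma:X0(105)MorphismtoP1}. The Atkin--Lehner operator $w_{35}$ is an involution of $X(\mathrm{s3},\mathrm{b5},\mathrm{b7})$, so the quotient map
\[
\pi \colon X(\mathrm{s3},\mathrm{b5},\mathrm{b7}) \longrightarrow X(\mathrm{s3},\mathrm{b5},\mathrm{b7})/w_{35}
\]
is a non-constant morphism of degree $2$ between projective smooth integral curves over $\C$.

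First I would invoke Corollary~\ref{Corollary:Castelnuovo-Severi:involution} with $C = X(\mathrm{s3},\mathrm{b5},\mathrm{b7})$, $C' = X(\mathrm{s3},\mathrm{b5},\mathrm{b7})/w_{35}$, the morphism $\pi$ above, and the given morphism $f$ of degree $d$. The hypothesis of the lemma is precisely that $f$ does not factor through $\pi$, so the corollary applies and yields
\[
g\bigl(X(\mathrm{s3},\mathrm{b5},\mathrm{b7})\bigr) \ \leq \ 2\, g\bigl(X(\mathrm{s3},\mathrm{b5},\mathrm{b7})/w_{35}\bigr) + d - 1.
\]

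Then I would substitute the genus values recorded in Lemma~\ref{Lemma:X(s3b5b7)}(1) and (2), namely $g\bigl(X(\mathrm{s3},\mathrm{b5},\mathrm{b7})\bigr) = 21$ and $g\bigl(X(\mathrm{s3},\mathrm{b5},\mathrm{b7})/w_{35}\bigr) = 7$, which gives $21 \leq 14 + d - 1 = d + 13$, hence $d \geq 8$. There is no serious obstacle here: once the two genera are in hand (this is the content of Lemma~\ref{Lemma:X(s3b5b7)}, established in the appendix via LMFDB), the bound is immediate. The only point that merits a word of justification is that $w_{35}$ genuinely defines a degree-$2$ quotient of a smooth curve, i.e.\ that it is a well-defined involution of $X(\mathrm{s3},\mathrm{b5},\mathrm{b7})$, which is standard for Atkin--Lehner operators on these modular curves.
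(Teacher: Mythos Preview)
Your proof is correct and follows exactly the same approach as the paper: apply Corollary~\ref{Corollary:Castelnuovo-Severi:involution} to the degree-$2$ quotient by $w_{35}$ and substitute the genera $21$ and $7$ from Lemma~\ref{Lemma:X(s3b5b7)} to obtain $21 \leq d + 13$. The paper's proof is in fact a one-line version of what you wrote.
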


\begin{proof}
By Corollary \ref{Corollary:Castelnuovo-Severi:involution},
we have
\[
g(X(\mathrm{s3},\mathrm{b5},\mathrm{b7})) = 21 \ \leq
\ 2 g(X(\mathrm{s3},\mathrm{b5},\mathrm{b7})/w_{35}) + d - 1 = d + 13.
\]
\end{proof}

We shall apply Theorem \ref{Theorem:FinitenessLowDegreePoints2}
to $X(\mathrm{s3},\mathrm{b5},\mathrm{b7})$.
\begin{itemize}
\item
Since
$g(X(\mathrm{s3},\mathrm{b5},\mathrm{b7})) = 21 \geq 11$,
the condition on the genus is satisfied.

\item 
By Lemma \ref{Lemma:X(s3b5b7)} (3),
the condition on the Mordell--Weil group is satisfied;
see Remark \ref{Remark:BSDknowncases}.

\item 
By Lemma \ref{Lemma:X(s3b5b7)MorphismtoP1},
there does not exist a non-constant morphism
$X(\mathrm{s3},\mathrm{b5},\mathrm{b7}) \to \PP^1$
of degree $5$ defined over $\C$.
\end{itemize}

Hence we can apply
Theorem \ref{Theorem:FinitenessLowDegreePoints2} to
$X(\mathrm{s3},\mathrm{b5},\mathrm{b7})$.
The proof of
Theorem \ref{MainTheorem:FinitenessLowDegreePointsModularCurves}
for $X(\mathrm{s3},\mathrm{b5},\mathrm{b7})$
is complete.
\qed

\subsection{Proof of Theorem
\ref{MainTheorem:FinitenessLowDegreePointsModularCurves}
for $X(\mathrm{b3},\mathrm{b5},\mathrm{ns7})/w_3$}

The following result is proved in
Appendix \ref{Appendix:X(b3b5bns7)w3}.

\begin{lem}
\label{Lemma:X(b3b5bns7)w3}
\begin{enumerate}
\item The genus of $X(\mathrm{b3},\mathrm{b5},\mathrm{ns7})/w_3$ is $19$.
\item The genus of $X(\mathrm{b3},\mathrm{b5},\mathrm{ns7})/\langle w_3, w_5 \rangle$ is $6$.
\item Let $f$ be a cusp form of weight $2$ corresponding to
a holomorphic differential form on $X(\mathrm{b3},\mathrm{b5},\mathrm{ns7})/w_3$.
Then, either the analytic rank of $f$ is $0$,
or the Hecke field of $f$ is larger than $\Q$.
\end{enumerate}
\end{lem}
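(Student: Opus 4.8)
The plan is to identify $X(\mathrm{b3},\mathrm{b5},\mathrm{ns7})$ with the modular curve $X_\Gamma$ attached to the explicit congruence subgroup $\Gamma\subset\SL_2(\Z)$ of level $105$ cut out by the level structures $B(3)\times B(5)\times C_{\mathrm{ns}}^{+}(7)$, and to determine $S_2(\Gamma)$, the space of weight-$2$ cusp forms on $\Gamma$, as a module over the Hecke algebra together with the action of the Atkin--Lehner involutions $w_3$ and $w_5$. Equivalently I want the decomposition, up to isogeny, of $\Jac\bigl(X(\mathrm{b3},\mathrm{b5},\mathrm{ns7})\bigr)$ into modular abelian varieties $A_f$ attached to Galois orbits of weight-$2$ newforms $f$, together with the $(w_3,w_5)$-eigenvalue of each factor. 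I would produce this either by a direct modular-symbols computation for a general congruence subgroup presented by its image in $\GL_2(\Z/105\Z)$, or, more conceptually, by combining Atkin--Lehner--Li new/old theory at the primes $3$ and $5$ with the known description of the Jacobian of the non-split Cartan modular curve of level $7$ (Chen, Edixhoven; cf.\ the treatment in \cite{Box:Modularity}), which transports the $\mathrm{ns7}$-structure to classical newforms of level dividing $3\cdot5\cdot7^2=735$ carrying a prescribed Atkin--Lehner eigenvalue at $49$.

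Granting this, parts (1) and (2) are dimension counts. For any subgroup $H\subseteq\langle w_3,w_5\rangle$ one has $g\bigl(X(\mathrm{b3},\mathrm{b5},\mathrm{ns7})/H\bigr)=\dim_{\C}S_2(\Gamma)^{H}$, the dimension of the $H$-invariant subspace. I would compute the four $(w_3,w_5)$-eigenspace dimensions (they sum to $g\bigl(X(\mathrm{b3},\mathrm{b5},\mathrm{ns7})\bigr)=37$, which follows from the index $504$ of the table together with the counts of elliptic points and cusps) and read off $g\bigl(X(\mathrm{b3},\mathrm{b5},\mathrm{ns7})/w_3\bigr)=\dim S_2(\Gamma)^{w_3=+1}=19$ and $g\bigl(X(\mathrm{b3},\mathrm{b5},\mathrm{ns7})/\langle w_3,w_5\rangle\bigr)=\dim S_2(\Gamma)^{w_3=w_5=+1}=6$. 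As a cross-check one can instead count the fixed points of $w_3$, $w_5$ and $w_{15}$ on the curve --- these correspond to elliptic curves with suitable complex multiplication that also carry the prescribed level structure --- and apply Riemann--Hurwitz; note in particular that $g=19$ forces $w_3$ to act freely on $X(\mathrm{b3},\mathrm{b5},\mathrm{ns7})$.

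For part (3), the Jacobian of $X(\mathrm{b3},\mathrm{b5},\mathrm{ns7})/w_3$ is isogenous to the product of the factors $A_f$ with $w_3$-eigenvalue $+1$ in the decomposition above, so a weight-$2$ Hecke eigenform $f$ corresponding to a holomorphic differential on that curve is one of these newforms. The ones among them whose Hecke field equals $\Q$ form a finite, explicit list (rational newforms of levels dividing $735$); for each I would consult the LMFDB and verify that $L(f,1)\neq0$, i.e.\ that its analytic rank is $0$. This yields the stated dichotomy: every such $f$ either has analytic rank $0$ or has Hecke field strictly larger than $\Q$. Together with Remark \ref{Remark:BSDknowncases} this is exactly what feeds the second hypothesis of Theorem \ref{Theorem:FinitenessLowDegreePoints2} when it is later applied to $X(\mathrm{b3},\mathrm{b5},\mathrm{ns7})/w_3$ (the remaining hypotheses coming from (1) and from (2) via the Castelnuovo--Severi inequality).

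The step I expect to be the main obstacle is pinning down the decomposition of $S_2(\Gamma)$ precisely, especially the bookkeeping at the non-split Cartan prime $7$: correctly matching the $\mathrm{ns7}$-structure with level-$735$ newforms of the right $w_{49}$-eigenvalue, and tracking the $w_3$- and $w_5$-actions through that identification. Once this decomposition, with its Atkin--Lehner eigenvalues, is in hand, parts (1)--(3) reduce to dimension counts and finitely many LMFDB lookups.
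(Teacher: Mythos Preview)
Your proposal is correct and follows essentially the same approach as the paper: use Chen's isogeny relating $\Jac(X(\mathrm{b3},\mathrm{b5},\mathrm{ns7}))$ to $\Jac(X(\mathrm{b3},\mathrm{b5},\mathrm{s7}))\cong\Jac(X_0(735)/w_{49})$ (with correction terms $J_0(105)$ and $J_0(15)$), enumerate via LMFDB the contributing newforms of level dividing $735$ together with their Fricke signs, and then read off the $w_3$- and $\langle w_3,w_5\rangle$-invariant dimensions and the analytic ranks. The paper carries out exactly this bookkeeping (your ``main obstacle'') explicitly in the tables of Appendix~\ref{Appendix:X(b3b5bns7)w3}.
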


The following lemma can be proved by
the same way as
Lemma \ref{Lemma:X0(105)MorphismtoP1} and
Lemma \ref{Lemma:X(s3b5b7)MorphismtoP1}.

\begin{lem}
\label{Lemma:X(b3b5bns7)w3MorphismtoP1}
Let $f \colon X(\mathrm{b3},\mathrm{b5},\mathrm{ns7})/w_3 \to \PP^1$
be a non-constant morphism
of degree $d$ over $\C$ which does not factor through
$X(\mathrm{b3},\mathrm{b5},\mathrm{ns7})/w_3 \to 
X(\mathrm{b3},\mathrm{b5},\mathrm{ns7})/\langle w_3, w_5 \rangle$.
Then we have $d \geq 8$.
\end{lem}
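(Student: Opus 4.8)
The plan is to mimic the proofs of Lemma \ref{Lemma:X0(105)MorphismtoP1} and Lemma \ref{Lemma:X(s3b5b7)MorphismtoP1} verbatim, applying the Castelnuovo--Severi inequality in the form of Corollary \ref{Corollary:Castelnuovo-Severi:involution} to the curve $C = X(\mathrm{b3},\mathrm{b5},\mathrm{ns7})/w_3$ and the degree $2$ quotient map $\pi \colon C \to C'$ with $C' = X(\mathrm{b3},\mathrm{b5},\mathrm{ns7})/\langle w_3, w_5 \rangle$. First I would record the two genus values supplied by Lemma \ref{Lemma:X(b3b5bns7)w3}, namely $g(C) = 19$ and $g(C') = 6$, and note that $\pi$ is indeed a non-constant morphism of degree $2$ (it is the quotient by the residual Atkin--Lehner involution $w_5$ acting on $X(\mathrm{b3},\mathrm{b5},\mathrm{ns7})/w_3$).

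Next, given a non-constant morphism $f \colon C \to \PP^1$ of degree $d$ over $\C$ that does not factor through $\pi$, Corollary \ref{Corollary:Castelnuovo-Severi:involution} (with the roles $C \to C'$ for $\pi$ and $C \to \PP^1$ for $f$) yields
\[
g(C) \ \leq \ 2 g(C') + d - 1,
\]
that is, $19 \leq 2 \cdot 6 + d - 1 = d + 11$, hence $d \geq 8$. This is the entire argument; there is no serious obstacle, since the only inputs are the two genus computations (already provided by Lemma \ref{Lemma:X(b3b5bns7)w3}) and the Castelnuovo--Severi inequality (already established as Corollary \ref{Corollary:Castelnuovo-Severi:involution}).

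The one point that deserves a sentence of care is verifying that $w_5$ really descends to an involution of $X(\mathrm{b3},\mathrm{b5},\mathrm{ns7})/w_3$ with quotient $X(\mathrm{b3},\mathrm{b5},\mathrm{ns7})/\langle w_3, w_5 \rangle$; this is immediate because $w_3$ and $w_5$ are commuting Atkin--Lehner involutions, so $\langle w_3, w_5 \rangle \cong (\Z/2\Z)^2$ acts on $X(\mathrm{b3},\mathrm{b5},\mathrm{ns7})$ and the tower of quotients by $\langle w_3 \rangle$ and then by $\langle w_3, w_5 \rangle$ gives the desired degree $2$ map. With that in place, the inequality above closes the proof exactly as in the two preceding lemmas.
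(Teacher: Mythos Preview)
Your proof is correct and follows exactly the same approach as the paper: apply Corollary \ref{Corollary:Castelnuovo-Severi:involution} with the genus values $g(C)=19$ and $g(C')=6$ from Lemma \ref{Lemma:X(b3b5bns7)w3} to obtain $19 \leq d + 11$, hence $d \geq 8$. Your extra sentence checking that $w_5$ descends to an involution on the $w_3$-quotient is a helpful clarification, but otherwise the argument is identical.
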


\begin{proof}
By Corollary \ref{Corollary:Castelnuovo-Severi:involution},
we have
\[
g(X(\mathrm{b3},\mathrm{b5},\mathrm{ns7})/w_3) = 19 \ \leq
\ 2 g(X(\mathrm{b3},\mathrm{b5},\mathrm{ns7})/\langle w_3, w_5 \rangle) + d - 1 = d + 11.
\]
\end{proof}

We shall apply Theorem \ref{Theorem:FinitenessLowDegreePoints2}
to $X(\mathrm{b3},\mathrm{b5},\mathrm{ns7})/w_3$.
\begin{itemize}
\item 
Since
$g(X(\mathrm{b3},\mathrm{b5},\mathrm{ns7})/w_3) = 19 \geq 11$,
the condition on the genus is satisfied.

\item 
By Lemma \ref{Lemma:X(b3b5bns7)w3} (3),
the condition on the Mordell--Weil group is satisfied;
see Remark \ref{Remark:BSDknowncases}.

\item 
By Lemma \ref{Lemma:X(b3b5bns7)w3MorphismtoP1},
there does not exist a non-constant morphism
$X(\mathrm{b3},\mathrm{b5},\mathrm{ns7})/w_3 \to \PP^1$
of degree $5$ defined over $\C$.
\end{itemize}

Hence we can apply
Theorem \ref{Theorem:FinitenessLowDegreePoints2}
to $X(\mathrm{b3},\mathrm{b5},\mathrm{ns7})/w_3$.
The proof of
Theorem \ref{MainTheorem:FinitenessLowDegreePointsModularCurves}
for $X(\mathrm{b3},\mathrm{b5},\mathrm{ns7})/w_3$
is complete.
\qed

\subsection{Proof of Theorem
\ref{MainTheorem:FinitenessLowDegreePointsModularCurves}
for $X(\mathrm{s3},\mathrm{b5},\mathrm{e7})$}

This case is easy because the gonality is very large.
The gonality of $X(\mathrm{s3},\mathrm{b5},\mathrm{e7})$
is larger than or equal to $15$;
see Section \ref{Subsection:GonalityModularCurves}.
In particular, it is larger than or equal to $11$.
By Theorem \ref{Theorem:AbramovichFrey},
the set of rational points on
$X(\mathrm{s3},\mathrm{b5},\mathrm{e7})$
of degree $5$ is finite.
This proves
Theorem \ref{MainTheorem:FinitenessLowDegreePointsModularCurves}
for $X(\mathrm{s3},\mathrm{b5},\mathrm{e7})$.
\qed

Summarizing the results in this section,
the proof of Theorem \ref{MainTheorem} is complete.

\appendix

\section{Methods of calculation}
\label{Appendix:Calculation}

\subsection{Decomposition of the space of cusp forms}

The main results of this paper depend on
the calculation of the genera of some modular curves and
the decomposition of their Jacobian varieties, up to isogeny.
Here we explain how to calculate them using LMFDB.

First, we recall basic results on modular forms and modular curves.
In this Appendix,
we only consider cusp forms of weight $2$ with trivial character.
We use the notation of \cite[Section 3]{BGJGP}.
Let $N \geq 1$.
Let $S_2(\Gamma_0(N))$ be the $\C$-vector space of
holomorphic cusp forms of weight $2$, of level $N$,
and with trivial character.
For every $M \geq 1$ dividing $N$ and $d \geq 1$ dividing $N/M$,
there is a map 
\[
\iota_{M,d} \colon S_2(\Gamma_0(M)) \to S_2(\Gamma_0(N)),
\quad
f(q) \mapsto f(q^d).
\]
The subspace of $S_2(\Gamma_0(N))$ spanned by
the image of $\iota_{M,d}$ for $(M,d)$ is called
the space of \textit{old forms}.
The subspace of newforms $S_2^{\new}(\Gamma_0(N))$ is defined to be
the orthogonal complement of
the subspace of old forms with respect to the Petersson inner product.
There is a $\C$-basis 
$\New_N \subset S_2^{\new}(\Gamma_0(N))$
consisting of normalized Hecke eigenforms.
The absolute Galois group $\Gal(\overline{\Q}/\Q)$ acts on $\New_N$
via the Fourier coefficients.
Let $\Gal(\overline{\Q}/\Q) \backslash \New_N$
be the set of $\Gal(\overline{\Q}/\Q)$-orbits.
For $f = \sum_{n \geq 1} a_n q^n \in \New_N$,
let $E_f \coloneqq \Q(a_n)$ be the subfield of $\C$
generated by the Fourier coefficients.
It is a number field embedded in $\C$.
It is called the \textit{field of coefficients}
or the \textit{Hecke field} of $f$.
For an embedding $\tau \colon E_f \hookrightarrow \C$
and $d \geq 1$,
we put
$\prescript{\tau}{}f(q^d) \coloneqq \sum_{n \geq 1} \tau(a_n) q^{dn}$.

We have the following decomposition (see \cite[(3.3)]{BGJGP}):
\begin{equation}
\label{Equation:DecompositionModularForms}
S_2(\Gamma_0(N))
\ =
\ \bigoplus_{M|N}
\ \bigoplus_{f \in \Gal(\overline{\Q}/\Q) \backslash \New_M}
\ \bigoplus_{d | (N/M)}
\ \bigoplus_{\tau \colon E_f \hookrightarrow \C}
\ \C \, \prescript{\tau}{}f(q^d).
\end{equation}
We put
\[
V_f \coloneqq
\bigoplus_{d | (N/M)}
\ \bigoplus_{\tau \colon E_f \hookrightarrow \C}
\ \C \, \prescript{\tau}{}f(q^d).
\]
We call it \textit{the subspace associated with $f$}.
We have
\begin{equation}
\label{Equation:DecompositionModularForms2}
S_2(\Gamma_0(N))
\ =
\ \bigoplus_{M|N}
\ \bigoplus_{f \in \Gal(\overline{\Q}/\Q) \backslash \New_M} V_f.
\end{equation}

Let $A_f$ be the abelian variety of $\GL_2$-type over $\Q$
associated to $f$ constructed by Shimura.
It is an abelian variety of dimension $[E_f : \Q]$.
Ribet proved it is $\Q$-simple \cite[Corollary 4.2]{Ribet:twists}.
It is known that $J_0(N) \coloneqq \Jac(X_0(N))$
is isogenous to the product of several copies of $A_f$
for $f \in \New_M$.
Here $M$ is a positive integer dividing $N$.
More precisely, we have the following decomposition parallel to
(\ref{Equation:DecompositionModularForms2})
(see \cite[(3.3)]{BGJGP}):
\begin{equation}
\label{Equation:DecompositionAbelianVarieties}
J_0(N) \ \sim
\ \bigoplus_{M|N}
\ \bigoplus_{f \in \Gal(\overline{\Q}/\Q) \backslash \New_M}
\ A_f^{n_f}.
\end{equation}
This decomposition is defined over $\Q$.
The positive integer $n_f \geq 1$ is the multiplicity of $A_f$
in $J_0(N)$.
It is equal to the number of positive divisors of $M/N$.
In particular, if $M = N$, then $n_f = 1$.
We have $\dim_{\C} V_f = n_f [E_f : \Q]$.

\subsection{Action of Atkin--Lehner operators on the space of cusp forms }

Let $N \geq 1$.
Let $Q$ be a positive divisor of $N$ satsifying
$\mathrm{gcd}(Q,N/Q) = 1$.
Take $x, y, z, t \in \Z$ satisfying $Q^2 xt - N yz = Q$.
The Atkin--Lehner operator $w_Q$
is defined by the matrix
$\begin{pmatrix}
Qx & y \\ Nz & Qt
\end{pmatrix}$
of determinant $Q$.
Explicitly, we have
\[
w_Q(f)(\tau) \coloneqq
\frac{Q}{(Nz \tau + Qt)^2} \, f\bigg( \frac{Qx \tau + y}{Nz \tau + Qt} \bigg)
\]
for $f \in S_2(\Gamma_0(N))$.
Here $\tau \in \C \ (\mathrm{Im}\,\tau > 0)$
is the coordinate on the complex upper-half plane,
and $q \coloneqq \exp(2 \pi \sqrt{-1} \tau)$.
The operator $w_Q$ does not depend on the choice of $x, y, z, t$.
It is an involution on $S_2(\Gamma_0(N))$,
i.e, it satisfies $w_Q \circ w_Q = \mathrm{id}$.
Let $p$ be a prime number dividing $N$.
Let $n \coloneqq v_p(N)$ be the $p$-adic valuation of $N$,
i.e., $p^n$ divides $N$ and $p^{n+1}$ does not divide $N$.
The operator $w_{p^n}$ is sometimes written as $w_p$.
For $f \in \New_N$, we have $w_{p^n}(f) = \varepsilon_p f$
for some $\varepsilon_p \in \{ \pm 1\}$.
The sign $\varepsilon_p$ is called the \textit{Fricke sign} of $f$ at $p$.

We shall calculate the action of $w_{p^n}$
on the space of old forms.
Let $M$ be a positive divisor of $N$ and $f \in \New_M$.
If $p$ divides $M$, let $\varepsilon_p$ be the Fricke sign of $f$ at $p$.
If $p$ does not divide $M$, we put $\varepsilon_p \coloneqq 1$.
Let $d$ be the positive divisor of $N/M$ such that
$v_p(d) \leq v_p(N/M)/2$.
We put $d' \coloneqq p^{v_p(N/M) - 2 v_p(d)} \cdot d$.
Then, the calculation in the bottom of \cite[p.\ 148]{Atkin-Lehner}
shows the following.
\begin{itemize}
\item Assume $d \neq d'$.
In this case, the operator $w_{p^n}$ interchanges
the subspaces $\C f(q^d)$ and $\C f(q^{d'})$.
Hence both of the $(+1)$-eigenspace and the $(-1)$-eigenspace
are one-dimensional in $\C f(q^d) \oplus \C f(q^{d'})$.

\item Assume $d = d'$.
This occurs if and only if
$v_p(N/M)$ is even and $v_p(d) = v_p(N/M)/2$.
In this case, the operator $w_{p^n}$ stabilizes the subspace
$\C f(q^d)$ and its one-dimensional eigenvalue is $\varepsilon_p$.
\end{itemize}

Recall that the subspace
$V_f \subset S_2(\Gamma_0(N))$ associated with $f$
is spanned by
$\prescript{\tau}{}f(q^d)$
for $d | (N/M)$ and $\tau \colon E_f \hookrightarrow \C$.
Hence we have
\[
\dim V_f = [E_f : \Q] \cdot \prod_{\substack{q \, : \, \text{prime} \\ q |(N/M)}} (v_q(N/M) + 1).
\]

In summary, we have the following result.

\begin{lem}
\label{Lemma:AtkinLehnerOldForms}
Let $d_{+}$ (resp.\ $d_{-}$) be the dimension of the
$(+1)$-eigenspace (resp.\ $(-1)$-eigenspace)
of the Atkin--Lehner operator $w_{p^n}$ on $V_f$.
\begin{enumerate}
\item If $v_p(N/M)$ is odd, then
$d_{+} = d_{-} = (\dim V_f)/2$.

\item If $v_p(N/M)$ is even, then
\[
d_{\pm} = [E_f : \Q] \cdot \frac{v_p(N/M) + (1 \pm \varepsilon_p)}{2} \cdot \prod_{\substack{q \, : \, \text{prime},\ q \neq p \\ q |(N/M)}} (v_q(N/M) + 1).
\]
\end{enumerate}
\end{lem}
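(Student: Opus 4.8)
The plan is to reduce the lemma to the block structure of $w_{p^n}$ on $V_f$ already extracted from \cite[p.~148]{Atkin-Lehner}. Write $m \coloneqq v_p(N/M)$, and let $L$ denote the largest divisor of $N/M$ prime to $p$, so that every positive divisor $d$ of $N/M$ is uniquely of the form $d = p^a e$ with $0 \le a \le m$ and $e \mid L$. The involution $d \mapsto d'$ recalled above then sends $p^a e$ to $p^{\,m-a} e$: it fixes the prime-to-$p$ part $e$ and acts on the exponent of $p$ by $a \mapsto m-a$. Note that the number of admissible $e$ is $\prod_{q \mid L}(v_q(N/M)+1) = \prod_{q \neq p,\, q \mid (N/M)}(v_q(N/M)+1)$, and the number of embeddings $\tau \colon E_f \hookrightarrow \C$ is $[E_f:\Q]$.

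First I would decompose $V_f$ into $w_{p^n}$-stable blocks. For each pair $(e,\tau)$ with $e \mid L$ and $\tau \colon E_f \hookrightarrow \C$, set
\[
V_{f,e,\tau} \coloneqq \bigoplus_{a=0}^{m} \C\, \prescript{\tau}{}f(q^{p^a e}).
\]
Since $w_{p^n}$ (with $Q = p^n$ and $\gcd(p^n,N/p^n)=1$) commutes with the Hecke operators $T_\ell$ for $\ell \nmid N$, whose eigenvalues $\tau(a_\ell)$ on $\C\,\prescript{\tau}{}f(q^{d})$ separate the different embeddings $\tau$, and since the bullet points recalled above from \cite[p.~148]{Atkin-Lehner} show that $w_{p^n}$ sends each line $\C\,\prescript{\tau}{}f(q^{p^a e})$ into $\C\,\prescript{\tau}{}f(q^{p^{m-a} e})$, the operator $w_{p^n}$ preserves every block $V_{f,e,\tau}$, and $V_f = \bigoplus_{e,\tau} V_{f,e,\tau}$ is a direct sum decomposition into subspaces of dimension $m+1$. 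Hence it suffices to compute the eigenspace dimensions of $w_{p^n}$ on a single block and then multiply by the number of blocks, namely $[E_f:\Q]\cdot\prod_{q \neq p,\, q \mid (N/M)}\bigl(v_q(N/M)+1\bigr)$.

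Next I would count inside one block $V_{f,e,\tau}$ by pairing the index $a$ with $m-a$. If $m$ is odd there are $(m+1)/2$ such pairs and no fixed index; by the first bullet each two-dimensional subspace $\C\,\prescript{\tau}{}f(q^{p^a e}) \oplus \C\,\prescript{\tau}{}f(q^{p^{m-a}e})$, on which $w_{p^n}$ is an involution swapping the two lines, splits as a one-dimensional $(+1)$-eigenspace and a one-dimensional $(-1)$-eigenspace, so on the block $d_+ = d_- = (m+1)/2$. If $m$ is even there are $m/2$ pairs with $a \neq m-a$, each again contributing $1+1$, together with the single fixed index $a = m/2$; by the second bullet the line $\C\,\prescript{\tau}{}f(q^{p^{m/2}e})$ is a $w_{p^n}$-eigenline with eigenvalue $\varepsilon_p$, so it contributes to $d_+$ if $\varepsilon_p = +1$ and to $d_-$ if $\varepsilon_p = -1$. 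Therefore on the block $d_{\pm} = m/2 + (1 \pm \varepsilon_p)/2 = (m+1\pm\varepsilon_p)/2$. Multiplying by the number of blocks yields part (2); for part (1) one further uses the displayed formula for $\dim V_f$ to identify $(m+1)/2\cdot[E_f:\Q]\cdot\prod_{q\neq p}(v_q(N/M)+1)$ with $(\dim V_f)/2$.

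I do not expect a genuine obstacle here: the entire nontrivial content is the local description of $w_{p^n}$ on old forms borrowed from Atkin--Lehner, and everything else is the elementary bookkeeping above. The only point requiring a word of care is the compatibility of $w_{p^n}$ with the $\tau$-grading of $V_f$ — i.e.\ that $w_{p^n}$ does not mix $\prescript{\tau}{}f$ with $\prescript{\sigma}{}f$ for $\sigma \neq \tau$ — which, as indicated, follows from the commutation of $w_{p^n}$ with the prime-to-$N$ Hecke algebra together with the fact that the corresponding eigenvalue systems $(\ell \mapsto \tau(a_\ell))_{\ell\nmid N}$ are pairwise distinct for distinct $\tau$.
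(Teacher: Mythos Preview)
Your proposal is correct and follows exactly the same approach as the paper: the paper does not give a separate proof of this lemma but rather states it as an ``in summary'' consequence of the two bullet points (drawn from \cite[p.~148]{Atkin-Lehner}) describing how $w_{p^n}$ swaps or fixes the lines $\C\,f(q^d)$, and your argument is precisely the elementary bookkeeping that turns those bullets into the stated formulas. Your explicit block decomposition by $(e,\tau)$ and the remark on why $w_{p^n}$ respects the $\tau$-grading simply make rigorous what the paper leaves implicit.
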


\subsection{Calculation for $X_0(105)$}
\label{Appendix:X0(105)}

Here we shall prove Lemma \ref{Lemma:X0(105)} using LMFDB.
Though Lemma \ref{Lemma:X0(105)} is well-known to the specialists,
we record the proof for the readers' convenience.

We shall search newforms of weight $2$, of level dividing 105,
and with trivial character.
Then we find $6$ such modular forms in total.

\vspace{0.1in}

\begin{center}
\begin{tabular}{|c|c|c|c|c|c|c|}
\hline
$f$ & $\dim A_f$
& \makecell{analytic \\ rank}
& \makecell{mult.}
& \makecell{Fricke sign \\ at $3$}
& \makecell{Fricke sign \\ at $5$}
& \makecell{Fricke sign \\ at $7$} \\
\hline
15.2.a.a & $1$ & $0$ & $2$ &  $1$ & $-1$ & \\
\hline
21.2.a.a & $1$ & $0$ & $2$ & $-1$ & & $1$ \\
\hline
35.2.a.a & $1$ & $0$ & $2$ & & $1$ & $-1$ \\
\hline
35.2.a.b  & $2$ & $0$ & $2$ & & $-1$ & $1$ \\
\hline
105.2.a.a & $1$ & $0$ & $1$ & $-1$ & $-1$ & $-1$ \\
\hline
105.2.a.b & $2$ & $0$ & $1$ & $1$ & $1$ & $-1$ \\
\hline
\end{tabular}
\end{center}

\vspace{0.1in}

\begin{quote}
\url{https://www.lmfdb.org/ModularForm/GL2/Q/holomorphic/?level_type=divides&level=105&weight=2&char_order=1}
\end{quote}

\vspace{0.1in}

Here ``$\dim A_f$'' is the dimension of the modular abelian variety
of $\GL_2$-type associated with $f$;
it is equal to the degree of the Hecke field.
The column ``mult.'' contains the multiplicity of the form
in the space of modular forms of level 105.
If $f$ is a newform of level $N_f$, the multiplicity is equal
to the number of positive divisors of $105/N_f$.

From the above list, it is easy to calculate the genera of
$X_0(105)$ and $X_0(105)/w_{35}$
and the Mordell--Weil rank of $J_0(105) = \Jac(X_0(105))$ over $\Q$.
The genus of $X_0(105)$
is equal to the sum of $\dim A_f$ times multiplicities.
Hence we have $g(X_0(105)) = 13$.
Since all cusp forms in the above list of have analytic rank $0$,
their associated abelian varieties have Mordell--Weil rank $0$ over $\Q$;
see Remark \ref{Remark:BSDknowncases}.
Therefore, $J_0(105)(\Q)$ is finite.

Next, we shall calculate the genus of the Atkin--Lehner quotient
$X_0(105)/w_{35}$.
A cusp form $f$ on $X_0(105)$ becomes
a holomorphic differential form on $X_0(105)/w_{35}$
if and only if $w_{35}(f) = f$.
The cusp form 105.2.a.a satisfies this condition,
because both of the Fricke signs at $5$ and $7$ are $-1$;
we have $w_{35}(f) = w_{5}(w_{7}(f)) = -w_{5}(f) = f$
if $f$ is the cusp form 105.2.a.a.
The cusp form 105.2.a.b does not satisfy this condition
because the Fricke sign at $5$ and $7$ are different.
Let us consider the cusp form 15.2.a.a, which is an old form.
The space of old forms associated to $f$ is $2$-dimensional.
By Lemma \ref{Lemma:AtkinLehnerOldForms},
both of the $(\pm 1)$-eigenspaces of $w_{35}$ are one-dimensional.
Hence the contribution of this modular form to
$X_0(105)/w_{35}$ is one-dimensional.
Similarly, the contribution of 21.2.a.a to $X_0(105)/w_{35}$
is one-dimensional.
Since other cusp forms do not contribute to $X_0(105)/w_{35}$,
the space of cusp form of weight 2 on
$X_0(105)/w_{35}$ is three dimensional in total.
Hence $X_0(105)/w_{35}$ has genus $3$.

This finishes the proof of  Lemma \ref{Lemma:X0(105)}.
\qed

\begin{rem}
By the above calculation,
$\Jac(X_0(105)/w_{35})$
is isogenous to the product of three elliptic curves;
they are associated with 15.2.a.a, 21.2.a.a, and 105.2.a.a.
\end{rem}

\subsection{Calculation for $X(\mathrm{s3},\mathrm{b5},\mathrm{b7})$}
\label{Appendix:X(s3b5b7)}

Although the level structure ``$\mathrm{s3}$''
associated with the normalizer of a split Cartan subgroup
is not listed by LMFDB,
we can calculate the space of cusp forms with this level structure using the isomorphism
\[
X(\mathrm{s3},\mathrm{b5},\mathrm{b7})
\cong X_0(315)/w_9.
\]
See \cite[p.\ 555]{Conrad-Diamond-Taylor} for example.
The space of cusp forms on
$X(\mathrm{s3},\mathrm{b5},\mathrm{b7})$
is identified with the $(+1)$-eigenspace of $w_9$
in the space of cusp forms on $X_0(315)$.

According to LMFDB,
there exist 15 cusp forms of weight $2$, level dividing 315,
and with trivial character.

\vspace{0.1in}

\begin{center}
\begin{tabular}{|c|c|c|c|c|c|c|}
\hline
$f$ & $\dim A_f$
& \makecell{analytic \\ rank}
& \makecell{mult.}
& \makecell{Fricke sign \\ at $3$}
& \makecell{Fricke sign \\ at $5$}
& \makecell{Fricke sign \\ at $7$} \\
\hline
15.2.a.a  & $1$ & $0$ & $4$ &  $1$ & $-1$ &      \\
\hline
21.2.a.a  & $1$ & $0$ & $4$ & $-1$ &      &  $1$ \\
\hline
35.2.a.a  & $1$ & $0$ & $3$ &      &  $1$ & $-1$ \\
\hline
35.2.a.b  & $2$ & $0$ & $3$ &      & $-1$ &  $1$ \\
\hline
45.2.a.a  & $1$ & $0$ & $2$ & $-1$ &  $1$ &      \\
\hline
63.2.a.a  & $1$ & $0$ & $2$ & $-1$ &      &  $1$ \\
\hline
63.2.a.b  & $2$ & $0$ & $2$ &  $1$ &      & $-1$ \\
\hline
105.2.a.a & $1$ & $0$ & $2$ & $-1$ & $-1$ & $-1$ \\
\hline
105.2.a.b & $2$ & $0$ & $2$ &  $1$ &  $1$ & $-1$ \\
\hline
315.2.a.a & $1$ & $1$ & $1$ & $-1$ &  $1$ & $-1$ \\
\hline
315.2.a.b & $1$ & $0$ & $1$ & $-1$ & $-1$ & $-1$ \\
\hline
315.2.a.c & $2$ & $1$ & $1$ &  $1$ &  $1$ &  $1$ \\
\hline
315.2.a.d & $2$ & $0$ & $1$ & $-1$ & $-1$ & $-1$ \\
\hline
315.2.a.e & $2$ & $0$ & $1$ & $-1$ &  $1$ &  $1$ \\
\hline
315.2.a.f & $2$ & $0$ & $1$ &  $1$ & $-1$ &  $1$ \\
\hline
\end{tabular}
\end{center}

\vspace{0.1in}

\begin{quote}
\url{https://www.lmfdb.org/ModularForm/GL2/Q/holomorphic/?level_type=divides&level=315&weight=2&char_order=1}
\end{quote}

\vspace{0.1in}

From this list,
we calculate that the genus of $X_0(315)$ is $41$.
We omit the details of the calculation because it is
the same as $X_0(105)$.
The $(+1)$-eigenspace of $w_9$ is $21$-dimensional.
Hence the genus of
$X(\mathrm{s3},\mathrm{b5},\mathrm{b7})$ is $21$.
We can similarly calculate that
the $(+1)$-eigenspace of $w_{35}$ is $7$ dimensional.
Hence the quotient
$X(\mathrm{s3},\mathrm{b5},\mathrm{b7})/w_{35}$ has genus $7$.

Let us calculate the analytic rank of $\Q$-simple factors
of $\Jac(X(\mathrm{s3},\mathrm{b5},\mathrm{b7}))$.
In the above list,
only 315.2.a.a and 315.2.a.c have positive analytic rank.
Both are newforms of level 315.
Since 315.2.a.a has $w_9$-eigenvalue $-1$,
it does not contribute to
$X(\mathrm{s3},\mathrm{b5},\mathrm{b7})$.
On the other hand, 315.2.a.c contribute to
$X(\mathrm{s3},\mathrm{b5},\mathrm{b7})$,
but the Hecke field of 315.2.a.c is $\Q(\sqrt{2})$.

The proof of Lemma \ref{Lemma:X(s3b5b7)} is complete.
\qed

\begin{rem}
Among the $\Q$-simple factors of 
$\Jac(X(\mathrm{s3},\mathrm{b5},\mathrm{b7}))$,
only one abelian variety has positive Mordell--Weil rank over $\Q$.
It is the abelian surface of $\GL_2$-type associated with 315.2.a.c.
Since 315.2.a.c has analytic rank $1$,
the Mordell--Weil rank of
$\Jac(X(\mathrm{s3},\mathrm{b5},\mathrm{b7}))$
over $\Q$ is $[\Q(\sqrt{2}) : \Q] = 2$;
see Remark \ref{Remark:BSDknowncases}.
\end{rem}

\subsection{Calculation for $X(\mathrm{b3},\mathrm{b5},\mathrm{ns7})/w_3$}
\label{Appendix:X(b3b5bns7)w3}

This case is more complicated than other cases.

First, we shall determine the $\Q$-simple factors of
the Jacobian of $X(\mathrm{b3},\mathrm{b5},\mathrm{ns7})$.
We shall use Le Hung's result
\cite[Lemma 4.2]{Le Hung:Thesis},
which is an adelic reformulation of the results of
Chen, de Smit, Edixhoven;
see also \cite[Theorem 1]{Chen:PLMS}, \cite[Theorem 1]{Chen:JAlg}.
Let $p = 7$.
Let $K^7$ be the adelic level subgroup outside $7$
corresponding to the congruence subgroup
\[
  \Gamma_0(15) =
  \{ g \in \SL_2(\Z) \mid g \ (\text{mod} \ 3) \in B(3),\ g   \ (\text{mod} \ 5) \in B(5) \}.
\]
The adelic level subgroups
$K_0(7)$, $K(7 \mathrm{s}^{+})$, and $K(7 \mathrm{ns}^{+})$
in \cite[Lemma 4.2]{Le Hung:Thesis}
correspond to the subgroups
$B(7)$, $C_\mathrm{s}^{+}(7)$, and $C_{\mathrm{ns}}^{+}(7)$
in Section \ref{Subsection:GonalityModularCurves},
respectively.
In the notation of this paper,
the isogeny stated in \cite[Lemma 4.2]{Le Hung:Thesis}
becomes
\begin{equation}
\label{Isogeny:Chen}
\Jac(X(\mathrm{b3},\mathrm{b5},\mathrm{ns7})) \times
J_0(105)
\ \sim
\ \Jac(X(\mathrm{b3},\mathrm{b5},\mathrm{s7})) \times
J_0(15).
\end{equation}

The isogeny (\ref{Isogeny:Chen})
is equivariant with respect to the action of Hecke operators
outside $7$ and the Atkin--Lehner operators $w_3$, $w_5$. 
Therefore, it is enough to determine the $\Q$-simple factors of
$J_0(105)$,
$\Jac(X(\mathrm{b3},\mathrm{b5},\mathrm{s7}))$,
and $J_0(15)$.
The determination of $\Q$-simple factors of $J_0(105)$
follows from the results in Section \ref{Appendix:X0(105)}.
The case for $J_0(15)$ is similar.
We shall determine the $\Q$-simple factors of
$\Jac(X(\mathrm{b3},\mathrm{b5},\mathrm{s7}))$
by the same method as in
Section \ref{Appendix:X(s3b5b7)}.
We shall use an isomorphism
\[
X(\mathrm{b3},\mathrm{b5},\mathrm{s7})
\ \cong \ 
X(\mathrm{b3},\mathrm{b5},\mathrm{b49})/w_{49}.
\ \cong \ 
X_0(735)/w_{49}.
\]
The action of $w_{49}$ on the space of old forms
on $X(\mathrm{b3},\mathrm{b5},\mathrm{s7})$
is calculated by Lemma \ref{Lemma:AtkinLehnerOldForms}.

We summarize the results of our calculation.
According to LMFDB, there are $35$ cusp forms of weight $2$,
of level dividing $735$, and with trivial character.
To see the list, visit the following website:
\begin{quote}
\url{https://www.lmfdb.org/ModularForm/GL2/Q/holomorphic/?level_type=divides&level=735&weight=2&char_order=1}
\end{quote}

Among them, $18$ cusp forms contribute to
$\Q$-simple factors of at least one of
$J_0(105)$,
$\Jac(X(\mathrm{b3},\mathrm{b5},\mathrm{s7}))$, and 
$J_0(15)$.
Here is the list of such forms.
In the following,
``mult.\ in $X$'' means the multiplicity of the $\Q$-simple factor
appearing in the Jacobian of the curve $X$.
The multiplicity in
$X(\mathrm{b3},\mathrm{b5},\mathrm{ns7})$
is calculated by the isogeny (\ref{Isogeny:Chen}).
From the isogeny (\ref{Isogeny:Chen}),
we have
\begin{align*}
&\quad (\text{mult.\ in \ $X(\mathrm{b3},\mathrm{b5},\mathrm{ns7})$}) \\
&=
(\text{mult.\ in \ $X(\mathrm{b3},\mathrm{b5},\mathrm{s7})$})
+ (\text{mult.\ in \ $X_0(15)$})
- (\text{mult.\ in \ $X_0(105)$}).
\end{align*}

\vspace{0.1in}

\begin{center}
\begin{tabular}{|c|c|c|c|c|c|c|}
\hline
$f$ & $\dim A_f$
& \makecell{analytic \\ rank}
& \makecell{mult.\ in \\ $X(\mathrm{b3},\mathrm{b5},\mathrm{s7})$}
& \makecell{mult.\ in \\ $X_0(15)$}
& \makecell{mult.\ in \\ $X_0(105)$}
& \makecell{mult.\ in \\ $X(\mathrm{b3},\mathrm{b5},\mathrm{ns7})$}
\\
\hline
15.2.a.a & $1$ & $0$ & $2$ & $1$ & $2$ & $1$ \\
\hline
21.2.a.a & $1$ & $0$ & $2$ & & $2$ & $0$ \\
\hline
35.2.a.a & $1$ & $0$ & $2$ & & $2$ & $0$ \\
\hline
35.2.a.b & $2$ & $0$ & $2$ & & $2$ & $0$ \\
\hline
105.2.a.a & $1$ & $0$ & $1$ & & $1$ & $0$ \\
\hline
105.2.a.b & $2$ & $0$ & $1$ & & $1$ & $0$ \\
\hline
147.2.a.c & $1$ & $0$ & $2$ & & & $2$ \\
\hline
147.2.a.d & $2$ & $1$ & $2$ & & & $2$ \\
\hline
147.2.a.e & $2$ & $0$ & $2$ & & & $2$ \\
\hline
245.2.a.e & $2$ & $1$ & $2$ & & & $2$ \\
\hline
245.2.a.f & $2$ & $0$ & $2$ & & & $2$ \\
\hline
245.2.a.h & $2$ & $0$ & $2$ & & & $2$ \\
\hline
735.2.a.b & $1$ & $1$ & $1$ & & & $1$ \\
\hline
735.2.a.e & $1$ & $0$ & $1$ & & & $1$ \\
\hline
735.2.a.g & $2$ & $0$ & $1$ & & & $1$ \\
\hline
735.2.a.i & $2$ & $1$ & $1$ & & & $1$ \\
\hline
735.2.a.n & $4$ & $0$ & $1$ & & & $1$ \\
\hline
735.2.a.o & $4$ & $0$ & $1$ & & & $1$ \\
\hline
\end{tabular}
\end{center}

\vspace{0.1in}

From the above list, we calculate that
the genus of $X(\mathrm{b3},\mathrm{b5},\mathrm{ns7})$ is $37$;
it is equal to the sum of
``$\dim A_f$'' $\cdot$
``mult.\ in $X(\mathrm{b3},\mathrm{b5},\mathrm{ns7})$.''
Also, we calculate that the Mordell--Weil rank of
$\Jac(X(\mathrm{b3},\mathrm{b5},\mathrm{ns7}))$ over $\Q$
is $11$; see Remark \ref{Remark:BSDknowncases}.

Next, we calculate the dimension of the $(+1)$-eigenspace of $w_3$
on the space of cusp forms of weight $2$ on
$X(\mathrm{b3},\mathrm{b5},\mathrm{ns7})$.
It turns out that, among the $18$ cusp form in the above list,
the following $8$ cusp forms contribute to
$X(\mathrm{b3},\mathrm{b5},\mathrm{ns7})/w_3$.

\vspace{0.1in}

\begin{center}
\begin{tabular}{|c|c|c|c|c|c|c|}
\hline
$f$ & $\dim A_f$
& \makecell{analytic \\ rank}
& \makecell{multiplicity}
& \makecell{Fricke sign \\ at $3$}
& \makecell{Fricke sign \\ at $5$}
& \makecell{Fricke sign \\ at $7$} \\
\hline
15.2.a.a  & $1$ & $0$ & $1$ & $1$ & $-1$ & \\
\hline
147.2.a.d & $2$ & $1$ & $2$ & $1$ &  & $1$ \\
\hline
245.2.a.e & $2$ & $1$ & $1$ &     & $1$ & $1$ \\
\hline
245.2.a.f & $2$ & $0$ & $1$ &     & $-1$ & $1$ \\
\hline
245.2.a.h & $2$ & $0$ & $1$ &     & $-1$ & $1$ \\
\hline
735.2.a.g & $2$ & $0$ & $1$ & $1$ & $-1$ & $1$ \\
\hline
735.2.a.i & $2$ & $1$ & $1$ & $1$ & $1$ & $1$ \\
\hline
735.2.a.n & $4$ & $0$ & $1$ & $1$ & $-1$ & $1$ \\
\hline
\end{tabular}
\end{center}

\vspace{0.1in}

We see that the only one-dimensional $\Q$-simple factor of
$\Jac(X(\mathrm{b3},\mathrm{b5},\mathrm{ns7})/w_3)$
is the modular elliptic curve associated with 15.2.a.a;
it has analytic rank $0$.

As before, we calculate that
the genus of the quotient
$X(\mathrm{b3},\mathrm{b5},\mathrm{ns7})/w_3$
is $19$.
The Mordell--Weil rank of
$\Jac(X(\mathrm{b3},\mathrm{b5},\mathrm{ns7})/w_3)$
over $\Q$ is $8$.
The $\Q$-simple factors
with positive Mordell--Weil rank over $\Q$
are the abelian varieties of $\GL_2$-type
associated with 147.2.a.d, 245.2.a.e, 735.2.a.i.
The multiplicities are $2, 1, 1$, respectively.
The multiplicities are calculated by
Lemma \ref{Lemma:AtkinLehnerOldForms}.
All of their Hecke fields are $\Q(\sqrt{2})$.

Finally, we calculate the dimension of the $(+1)$-eigenspace of $w_5$
on the space of cusp forms of weight $2$ on
$X(\mathrm{b3},\mathrm{b5},\mathrm{ns7})/w_3$.
The cusp forms contributing to the quotient
$X(\mathrm{b3},\mathrm{b5},\mathrm{ns7})/\langle w_3, w_5 \rangle$
are 147.2.a.d, 245.2.a.e, 735.2.a.i.
All of their multiplicities are $1$
by Lemma \ref{Lemma:AtkinLehnerOldForms}.
Since all of their Hecke fields are $\Q(\sqrt{2})$,
we calculate that the genus of
$X(\mathrm{b3},\mathrm{b5},\mathrm{ns7})/\langle w_3, w_5 \rangle$
is $6$.
Moreover, the Mordell--Weil rank of
$\Jac(X(\mathrm{b3},\mathrm{b5},\mathrm{ns7})/\langle w_3, w_5 \rangle)$
over $\Q$ is $6$.

The proof of Lemma \ref{Lemma:X(b3b5bns7)w3} is complete.
\qed

\begin{rem}
\label{Remark:WhyAtkinLehnerquotient}
In this Remark and the next Remark,
we explain why we chose the Atkin--Lehner quotient
$X(\mathrm{b3},\mathrm{b5},\mathrm{ns7})/w_3$.
According to LMFDB,
the newform 735.2.a.b has analytic rank $1$ and
Hecke field $\Q$.
The eigenvalues of $w_3$, $w_5$, $w_{49}$
are $-1$, $-1$, $+1$, respectively.
It is contained in the $(+1)$-eigenspace of $w_{49}$.
Hence the modular elliptic curve associated with
735.2.a.b has positive Mordell--Weil rank over $\Q$,
and appears in
$\Jac(X(\mathrm{b3},\mathrm{b5},\mathrm{ns7}))$
as a $\Q$-simple factor.
Since
$X(\mathrm{b3},\mathrm{b5},\mathrm{e7})
\to X(\mathrm{b3},\mathrm{b5},\mathrm{ns7})$
is a double covering,
this elliptic curve also appears in
$\Jac(X(\mathrm{b3},\mathrm{b5},\mathrm{e7}))$
as a $\Q$-simple factor.
Therefore, we cannot directly apply
Theorem \ref{Theorem:FinitenessLowDegreePoints2} to
$X(\mathrm{b3},\mathrm{b5},\mathrm{ns7})$ or
$X(\mathrm{b3},\mathrm{b5},\mathrm{e7})$.
According to LMFDB,
735.2.a.b is the only cusp form of positive analytic rank
with Hecke field $\Q$ contributing to
$X(\mathrm{b3},\mathrm{b5},\mathrm{ns7})$.
Since the Fricke sign of 735.2.a.b at $3$ is $-1$,
it does not contribute to
$X(\mathrm{b3},\mathrm{b5},\mathrm{ns7})/w_3$.
This is the reason why we can apply
Theorem \ref{Theorem:FinitenessLowDegreePoints2}
to this quotient.
\end{rem}

\begin{rem}
Here we explain why we do not consider the quotient
$X(\mathrm{b3},\mathrm{b5},\mathrm{ns7})/w_5$.
Since the Fricke sign of 735.2.a.b at $5$ is $-1$,
it does not contribute to
$X(\mathrm{b3},\mathrm{b5},\mathrm{ns7})/w_5$.
The condition on the Mordell--Weil group is satisfied
for this quotient.
However, since
$X(\mathrm{b3},\mathrm{b5},\mathrm{ns7})/w_5$
has genus $16$ and its quotient
$X(\mathrm{b3},\mathrm{b5},\mathrm{ns7})/\langle w_3, w_5 \rangle$
has genus $6$,
Corollary \ref{Corollary:Castelnuovo-Severi:involution} only gives
the following estimate
\[
g(X(\mathrm{b3},\mathrm{b5},\mathrm{ns7})/w_5) = 16 \ \leq
\ 2 g(X(\mathrm{b3},\mathrm{b5},\mathrm{ns7})/\langle w_3, w_5 \rangle) + d - 1 = d + 11.
\]
From this, we cannot conclude
the non-existence of a degree $5$ morphism to $\PP^1$
by the same method as before.
Although there is a possibility that
a sharper lower bound of the gonality of
$X(\mathrm{b3},\mathrm{b5},\mathrm{ns7})/w_5$
may be obtained by other methods,
we do not currently know how to apply
Theorem \ref{Theorem:FinitenessLowDegreePoints2} to
$X(\mathrm{b3},\mathrm{b5},\mathrm{ns7})/w_5$.
\end{rem}

\subsection*{Acknowledgements}

The work of Y.I. was supported by
JSPS KAKENHI Grant Number 21K13773.
The work of T.I. was supported by JSPS KAKENHI Grant Number
21H00973.
The work of Y.I. and T.I. was supported by
JSPS KAKENHI Grant Number 21K18577.
The work of S.Y. was supported by JSPS KAKENHI Grant Number
19K14514.
The authors of this paper would like to thank
Masao Oi and Masataka Chida for invaluable discussions
on modular forms and the Mordell--Weil groups.
The authors would also like to thank
the referees for reading the first draft of this
paper carefully and giving
helpful comments and suggestions.
The results of this paper cannot be obtained without use of
\textit{the L-functions and modular forms database (LMFDB)}
\cite{LMFDB}.
The authors would like to thank all the people working
for this wonderful database.

\bibliographystyle{amsplain}

\begin{thebibliography}{99}

\bibitem{Abramovich:Gonality}
  Abramovich, D.,
  \textit{A linear lower bound on the gonality of modular curves},
  Internat.\ Math.\ Res.\ Notices 1996, no.\ 20, 1005-1011.

\bibitem{Abramovich-Harris}
  Abramovich, D., Harris, J.,
  \textit{Abelian varieties and curves in $W_d(C)$},
  Compositio Math.\ 78 (1991), no.\ 2, 227-238.

\bibitem{Accola}
  Accola, R.\ D.\ M.,
  \textit{Topics in the theory of Riemann surfaces},
  Lecture Notes in Mathematics, 1595.\ Springer-Verlag, Berlin, 1994.

\bibitem{ACGH}
  Arbarello, E., Cornalba, M., Griffiths, P.\ A., Harris, J.,
  \textit{Geometry of algebraic curves}, Vol.\ I,
  Grundlehren der Mathematischen Wissenschaften, vol.\ 267.\ Springer-Verlag, New York, 1985.

\bibitem{Atkin-Lehner}
  Atkin, A.\ O.\ L., Lehner, J.,
  \textit{Hecke operators on $\Gamma_0(m)$},
  Math.\ Ann.\ 185 (1970), 134-160.

\bibitem{BGJGP}
  Baker, M.\ H., Gonz\'alez-Jim\'enez, E.,
  Gonz\'alez, J., Poonen, B.,
  \textit{Finiteness results for modular curves of genus at least 2},
  Amer.\ J.\ Math.\ 127 (2005), no.\ 6, 1325-1387.

\bibitem{Biswas}
  Biswas, I., \textit{On the Hodge cycles of Prym varieties},
  Ann.\ Sc.\ Norm.\ Super.\ Pisa Cl.\ Sci.\ (5) 3 (2004), no.\ 3, 625-635.


\bibitem{Bosch-Luetkebohmert-Raynaud}
  Bosch, S., L\"utkebohmert, W., Raynaud, M.,
  \textit{N\'eron models},
  Ergebnisse der Mathematik und ihrer Grenzgebiete (3),
  21.\ Springer-Verlag, Berlin, 1990.


\bibitem{Box:Modularity}
  Box, J.,
  \textit{Elliptic curves over totally real quartic fields not containing $\sqrt{5}$ are modular},
  Trans.\ Amer.\ Math.\ Soc.\ 375 (2022), no.\ 5, 3129-3172.

\bibitem{Breuil-Conrad-Diamond-Taylor}
  Breuil, C., Conrad, B., Diamond, F., Taylor, R.,
  \textit{On the modularity of elliptic curves over Q: wild $3$-adic exercises},
  J.\ Amer.\ Math.\ Soc.\ 14 (2001), no.\ 4, 843-939.

\bibitem{Chen:PLMS}
  Chen, I.,
  \textit{The Jacobians of non-split Cartan modular curves},
  Proc.\ London Math.\ Soc.\ (3) 77 (1998), no.\ 1, 1-38.

\bibitem{Chen:JAlg}
  Chen, I.,
  \textit{On relations between Jacobians of certain modular curves},
  J.\ Algebra 231 (2000), no.\ 1, 414-448.

\bibitem{Conrad-Diamond-Taylor}
  Conrad, B., Diamond, F., Taylor, R.,
  \textit{Modularity of certain potentially Barsotti-Tate
  Galois representations},
  J.\ Amer.\ Math.\ Soc.\ 12 (1999), no.\ 2, 521-567.

\bibitem{Debarre-Fahlaoui}
  Debarre, O., Fahlaoui, R.,
  \textit{Abelian varieties in $W^r_d(C)$ and points of bounded degree on algebraic curves},
  Compositio Math.\ 88 (1993), no.\ 3, 235-249.

\bibitem{Debarre-Klassen}
  Debarre, O., Klassen, M.\ J.,
  \textit{Points of low degree on smooth plane curves},
  J.\ Reine Angew.\ Math.\ 446 (1994), 81-87.

\bibitem{Derickx-Sutherland}
  Derickx, M., Sutherland, A.\ V.,
  \textit{Torsion subgroups of elliptic curves over quintic and sextic number fields},
  Proc.\ Amer.\ Math.\ Soc.\ 145 (2017), no.\ 10, 4233-4245.

\bibitem{Derickx-Najman-Siksek}
  Derickx, M., Najman, F., Siksek, S.,
  \textit{Elliptic curves over totally real cubic fields are modular},
  Algebra Number Theory \textbf{14} (2020), no.\ 7, 1791-1800.

\bibitem{Faltings:Lang}
  Faltings, G.,
  \textit{The general case of S.\ Lang's conjecture},
  Barsotti Symposium in Algebraic Geometry (Abano Terme, 1991),
  175-182,
  Perspect.\ Math., 15, Academic Press, San Diego, CA, 1994.

\bibitem{Freitas-Le Hung-Siksek}
  Freitas, N., Le Hung, B.\ V., Siksek, S.,
  \textit{Elliptic curves over real quadratic fields are modular},
  Invent.\ Math.\ 201 (2015), no.\ 1, 159-206.

\bibitem{Frey}
  Frey, G.,
  \textit{Curves with infinitely many points of fixed degree},
  Israel J.\ Math.\ 85 (1994), no.\ 1-3, 79-83.

\bibitem{Furumoto-Hasegawa}
  Furumoto, M., Hasegawa, Y.,
  \textit{Hyperelliptic quotients of modular curves $X_0(N)$},
  Tokyo J.\ Math.\ 22 (1999), no.\ 1, 105-125.

\bibitem{vanderGeer:points}
  Geer, G.\ van der,
  \textit{Points of degree $d$ on curves over number fields},
  Diophantine approximation and abelian varieties (Soesterberg, 1992), 111-116, Lecture Notes in Math., 1566, Springer, Berlin, 1993.

\bibitem{Harris-Silverman}
  Harris, J., Silverman, J.,
  \textit{Bielliptic curves and symmetric products},
  Proc.\ Amer.\ Math.\ Soc.\ 112 (1991), no.\ 2, 347-356.

\bibitem{Kalyanswamy}
  Kalyanswamy, S.,
  \textit{Remarks on automorphy of residually dihedral representations},
  Math.\ Res.\ Lett.\ 25 (2018), no.\ 4, 1285-1304.

\bibitem{Kato:EulerSystem}
  Kato, K.,
  \textit{$p$-adic Hodge theory and values of zeta functions of modular forms},
  Cohomologies $p$-adiques et applications arithm\'etiques.\ III.\ Ast\'erisque No.\ 295 (2004), ix, 117-290.

\bibitem{Kim-Sarnak}
  Kim, H.\ H.,
  \textit{Functoriality for the exterior square of $\mathrm{GL}_4$ and the symmetric fourth of $\mathrm{GL}_2$},
  With appendix 1 by Dinakar Ramakrishnan and appendix 2 by Kim and Peter Sarnak,
  J.\ Amer.\ Math.\ Soc.\ 16 (2003), no.\ 1, 139-183.

\bibitem{Le Hung:Thesis}
  Le Hung, B.\ V., \textit{Modularity of some elliptic curves over totally
real fields}, Ph.D.\ Thesis, Harvard University, 2014.
  (\url{https://dash.harvard.edu/handle/1/12269826})

\bibitem{Maulik-Poonen}
  Maulik, D., Poonen, B.,
  \textit{N\'eron-Severi groups under specialization},
  Duke Math.\ J.\ 161 (2012), no.\ 11, 2167-2206.


\bibitem{Ribet:twists}
  Ribet, K.\ A.,
  \textit{Twists of modular forms and endomorphisms of abelian varieties},
  Math.\ Ann.\ 253 (1980), no.\ 1, 43-62.

\bibitem{Stichtenoth}
  Stichtenoth, H.,
  \textit{Algebraic function fields and codes},
  Second edition.\ Graduate Texts in Mathematics, 254.\ Springer-Verlag, Berlin, 2009.

\bibitem{LMFDB}
  The LMFDB Collaboration, \textit{The $L$-functions and modular forms database}, 2021. \\
  (\url{https://www.lmfdb.org/})

\bibitem{Thorne}
  Thorne, J.\ A.,
  \textit{Automorphy of some residually dihedral Galois representations},
  Math.\ Ann.\ 364 (2016), no. 1-2, 589-648.

\bibitem{Thorne:Cyclotomic}
  Thorne, J.\ A.,
  \textit{Elliptic curves over $\mathbb{Q}_{\infty}$ are modular},
  J.\ Eur.\ Math.\ Soc.\ (JEMS) 21 (2019), no.\ 7, 1943-1948.

\bibitem{Tucker}
  Tucker, T.\ J.,
  \textit{Irreducibility, Brill--Noether loci and Vojta's inequality},
  With an appendix by Olivier Debarre,
  Trans.\ Amer.\ Math.\ Soc.\ 354 (2002), no.\ 8, 3011-3029.

\bibitem{Yoshikawa2}
  Yoshikawa, S.,
  \textit{On the modularity of elliptic curves over a composite field of some real quadratic fields},
  Res.\ Number Theory 2 (2016), Paper No.\ 31, 6 pp.

\bibitem{Yoshikawa1}
  Yoshikawa, S.,
  \textit{Modularity of elliptic curves over abelian totally real fields unramified at 3, 5, and 7},
  J.\ Th\'eor.\ Nombres Bordeaux 30 (2018), no.\ 3, 729-741.

\bibitem{Zhang:HeegnerPoints}
  Zhang, S.-W., \textit{Heights of Heegner points on Shimura curves},
  Ann.\ of Math.\ (2) 153 (2001), no.\ 1, 27-147.

\bibitem{Zhang:Survey}
  Zhang, S.-W.,
  \textit{Elliptic curves, $L$-functions, and CM-points},
  Current developments in mathematics, 2001, 179-219,
  Int.\ Press, Somerville, MA, 2002.

\end{thebibliography}

\end{document}